\numberwithin{equation}{subsection}
\numberwithin{figure}{subsection}
\numberwithin{table}{subsection}
\newcommand{\rpot}[1]{ (\hspace{-0,15em}( {#1} )\hspace{-0,15em})  }
\newcommand{\restr}[2]{{#1}\raise-.5ex\hbox{\ensuremath|}_{#2}}
\newcommand{\bigslant}[2]{{\raisebox{.2em}{$#1$}\hspace{-.1em}\left/ \hspace{-.1em}\raisebox{-.2em}{$#2$}\right.}}
\newcommand{\coh}[1]{\mathrm{H}^{#1}}
\newcommand{\cl}[1]{\mkern 1.5mu\overline{\mkern-1.5mu#1\mkern-1.5mu}\mkern 1.5mu}
\newcommand{\scl}[1]{{#1}^s}
\def \mono  {\hookrightarrow}
\def \epi   {\twoheadrightarrow}
\def \isom  {\stackrel{\sim}{\rightarrow}}
\def \bij   {\stackrel{1:1}{\rightarrow}}
\def \iv   {^{-1}}
\theoremstyle{plain}
\newtheorem{thm}[subsection]{Theorem}
\newtheorem*{thm*}{Theorem}
\newtheorem{thmsub}[equation]{Theorem}
\newtheorem{lem}[subsection]{Lemma}
\newtheorem*{lem*}{Lemma}
\newtheorem{lemsub}[equation]{Lemma}
\newtheorem{prop}[subsection]{Proposition}
\newtheorem*{prop*}{Proposition}
\newtheorem{propsub}[equation]{Proposition}
\newtheorem{cor}[subsection]{Corollary}
\newtheorem*{cor*}{Corollary}
\newtheorem{corsub}[equation]{Corollary}
\newtheorem*{claim*}{Claim}
\newtheorem*{conj*}{Conjecture}
\theoremstyle{definition}
\newtheorem{defn}[subsection]{Definition}
\newtheorem*{defn*}{Definition}
\newtheorem{defnsub}[equation]{Definition}
\newtheorem*{notn*}{Notation}
\theoremstyle{remark}
\newtheorem{rmk}[subsection]{Remark}
\newtheorem{rmksub}[equation]{Remark}
\newtheorem*{rmk*}{Remark}
\newtheorem{exa}[subsection]{Example}
\newtheorem*{exa*}{Example}
\newtheorem{exasub}[equation]{Example}
\DeclareMathOperator{\Ad}{Ad}
\DeclareMathOperator{\Aut}{Aut}
\DeclareMathOperator{\Cent}{Cent}
\DeclareMathOperator{\diag}{diag}
\DeclareMathOperator{\Div}{Div}
\DeclareMathOperator{\End}{End}
\DeclareMathOperator{\id}{id}
\DeclareMathOperator{\Int}{Int}
\DeclareMathOperator{\Gal}{Gal}
\DeclareMathOperator{\GL}{GL}
\DeclareMathOperator{\Hom}{Hom}
\DeclareMathOperator{\Nm}{Nm}
\DeclareMathOperator{\Pic}{Pic}
\DeclareMathOperator{\Prin}{PDiv}
\DeclareMathOperator{\SL}{SL}
\DeclareMathOperator{\Spec}{Spec}
\DeclareMathOperator{\SU}{SU}
\DeclareMathOperator{\Res}{Res}
\DeclareMathOperator{\val}{val}
\def \Jac {\Pic^0}
\def \ad    {\mathrm{ad}}
\def \der   {\mathrm{der}}
\newcounter{listnum}
\newcounter{asslistcounter}
\newcounter{subenvcounter}
\newenvironment{subenv}{%
 \begin{list}
  {\em (\arabic{subenvcounter})}
  {\setlength{\leftmargin}{20pt}
   \setlength{\rightmargin}{0pt}
   \setlength{\itemindent}{0pt}
   \setlength{\labelsep}{5pt}
   \setlength{\labelwidth}{13pt}
   \setlength{\listparindent}{\parindent}
   \setlength{\parsep}{0pt}
   \setlength{\itemsep}{0pt}
   \setlength{\topsep}{-\parskip}
   \usecounter{subenvcounter}}}
  {\end{list}}
\def \Hrm {{\mathrm{H}}}
\def \Nrm {{\mathrm{N}}}
\def \Zrm {{\mathrm{Z}}}
\def \bsf {{\mathsf{b}}}
\def \gsf {{\mathsf{g}}}
\def \jsf {{\mathsf{j}}}
\def \zsf {{\mathsf{z}}}
\def \Asf {{\mathsf{A}}}
\def \Bsf {{\mathsf{B}}}
\def \Dsf {{\mathsf{D}}}
\def \Esf {{\mathsf{E}}}
\def \Fsf {{\mathsf{F}}}
\def \Gsf {{\mathsf{G}}}
\def \Hsf {{\mathsf{H}}}
\def \Jsf {{\mathsf{J}}}
\def \Msf {{\mathsf{M}}}
\def \Psf {{\mathsf{P}}}
\def \Rsf {{\mathsf{R}}}
\def \Ssf {{\mathsf{S}}}
\def \Tsf {{\mathsf{T}}}
\def \Usf {{\mathsf{U}}}
\def \Zsf {{\mathsf{Z}}}
\def \bbf {{\mathbf{b}}}
\def \hbar {{\overline{h}}}
\def \tbar {{\overline{t}}}
\def \Gscr {{\mathscr{G}}}
\def \Vscr {{\mathscr{V}}}
\def \AA {{\mathbb{A}}}
\def \DD {{\mathbb{D}}}
\def \FF {{\mathbb{F}}}
\def \GG {{\mathbb{G}}}
\def \NN {{\mathbb{N}}}
\def \QQ {{\mathbb{Q}}}
\def \ZZ {{\mathbb{Z}}}
\def \FFbar {{\overline{\mathbb{F}}}}
\def \Ebreve {{\breve{E}}}
\def \Fbreve {{\breve{F}}}
\def \unif {\varpi}
\def \A {\mathrm{A}}
\def \B {\mathrm{B}}
\newcommand{\Vals}[1]{\Div({#1})_0}
\DeclareMathOperator{\loc}{loc}
\DeclareMathOperator{\cores}{cor}
\author[P.~Hamacher, W.~Kim]{Paul Hamacher and Wansu Kim}
\address{Paul Hamacher\\
Technische Universit\"at M\"unchen\\
Zentrum Mathematik - M 11\
Boltzmannstra{\ss}e 3\\
85748 Garching\\
Deutschland\\ \newline
Wansu Kim\\%
Department of Mathematical Sciences\\%
KAIST\\%
291 Daehak-ro, Yuseong-gu\\%
Daejeon, 34141\\%
South Korea}
\email{hamacher@ma.tum.de, wansu.math@kaist.ac.kr}
\keywords{shtukas, global function fields, linear algebraic groups}
\subjclass{20G30 (11S25,11G09)}
\title[On $\Gsf$-isoshtukas over function fields]{On $\Gsf$-isoshtukas over function fields}
\begin{document}

 \begin{abstract}
  In this paper we classify isogeny classes of global $\Gsf$-shtukas over a smooth projective curve $C/\FF_q$ (or equivalently $\sigma$-conjugacy classes in $\Gsf(\Fsf \otimes_{\FF_q} \cl{\FF_q})$ where $\Fsf$ is the field of rational functions of $C$) by two invariants $\bar\kappa,\bar\nu$ extending previous works of Kottwitz. This result can be applied to study points of moduli spaces of $\Gsf$-shtukas and thus is helpful to calculate their cohomology.
 \end{abstract}

 \maketitle

 \section{Introduction}
 Let $\FF_q$ be the finite field with $q$ elements and let $C$ be a curve (= geometrically integral smooth projective scheme of dimension $1$) over $\FF_q$. We denote by $\Fsf$ the field of rational functions on $C$.  We denote by $k$ an algebraic closure of $\FF_q$ and let $\breve\Fsf = \Fsf \otimes_{\FF_q} k$. The Frobenius automorphism on $k$ induces an automorphism $\sigma$ on $\breve\Fsf$.

 Let $\Gsf$ be a (connected) reductive group over $\Fsf$. A \emph{$\Gsf$-isoshtuka} over $k$ is a $\Gsf$-torsor $\Vscr$ over $\breve\Fsf$ together with an isomorphism $\phi\colon \sigma^\ast\Vscr \isom \Vscr$. 
 Such a notion naturally arises as the `generic fibre' of a $\Gscr$-shtuka over $k$ where $\Gscr$ is a smooth affine group scheme over $C$ with generic fibre $\Gsf$. The generic fibre plays the role of $\Gscr$-shtuka up to isogeny, hence the terminology `$\Gsf$-isoshtuka'.
 
 Since $\breve\Fsf$ has cohomological dimension one by Tsen's theorem, any $\Gsf$-torsor $\Vscr$ is trivial by \cite[\S~8.6]{BorelSpringer:RationalityPropertiesII}. Choosing a trivialisation $\Vscr \cong \Gsf_{\breve\Fsf}$, $\phi$ gets identified with the automorphism $\bsf \circ \sigma$ for some $\bsf \in \Gsf(\breve\Fsf)$. Every other trivialisation of $\Vscr$ can be obtained by postcomposing the above isomorphism with an element $\gsf \in \Gsf(\breve\Fsf)$, thus replacing $\bsf$ by $\gsf \bsf \sigma(\gsf^{-1})$. Hence this construction yields a natural bijection between the isomorphism classes of $\Gsf$-isoshtukas over $k$ and the set of $\sigma$-conjugacy classes in $\Gsf(\breve\Fsf)$. 

 This paper studies the pointed set $\B(\Fsf,\Gsf)$ of $\sigma$-conjugacy classes in $\Gsf(\breve\Fsf)$. Following the strategy of
Kottwitz' work \cite{Kottwitz:Gisoc1},\cite{Kottwitz:Gisoc2} on $\sigma$-conjugacy classes over $p$-adic fields and his construction of $B(F,G)$ for local and global fields in terms of Galois gerbs in \cite{Kottwitz:Gisoc3}, we
describe its elements via two invariants $\nu_\Gsf$ and $\kappa_\Gsf$ on $\B(\Fsf,\Gsf)$.
 
 Let us give more details on $\nu_\Gsf$ and $\kappa_\Gsf$.
 For any finite field extension $\Esf/\Fsf$, we denote by $\Div(\Esf)$ the free abelian group  generated by the set of places in $\Esf$ and let
 \[
  \Vals{\Esf} = \left\{ \sum n_y \cdot y \in \Div(\Esf) \mid \sum n_y = 0 \right\}.\footnote{Note that this definition is different from the subgroup of degree zero divisors $\Div^0(\Esf) = \left\{ \sum n_y \cdot y \in \Div(\Esf) \mid \sum n_y \cdot \deg(y) = 0 \right\}$. This is due to the fact that we are actually considering the Galois coinvariants of $\Div^0(\breve\Fsf)$ where every place has degree one,  see \S~\ref{ssect-Div0} for details.}
 \]
 For every finite extension $\Esf'/\Esf$, we obtain a homomorphism $\Vals{\Esf} \to \Vals{\Esf'}, x \mapsto \sum_{x'|x} [\Esf'_{x'}: \Esf_{x}] x'$. We denote by  $\Vals{\scl{\Fsf}} = \varinjlim \Vals{\Esf}$, where $\Esf$ runs through all finite separable extensions of $\Fsf$ and let $\DD_{\Fsf}$ be the $\Fsf$-protorus with character group $\Vals{\scl{\Fsf}}$.  In sections~\ref{sect-tori} and \ref{sect-redgps} we construct  invariants
 \begin{align*}
  \bar\kappa_\Gsf\colon &\B(\Fsf,\Gsf) \to (\pi_1(\Gsf) \otimes \Vals{\scl{\Fsf}})_{\Gal(\scl{\Fsf}/\Fsf)} \\
  \bar\nu_\Gsf\colon &\B(\Fsf,\Gsf) \to \left(\bigslant{\Hom_{\breve\Fsf}(\DD_\Fsf,\Gsf)}{\Gsf(\breve\Fsf)} \right)^{\sigma},
 \end{align*}
 which we call the Kottwitz map and the Newton map, respectively. 
 
 The above maps can be localised to obtain the Kottwitz point and the Newton point over a local field of $\Fsf$. More precisely, let $x$ be a closed point of $C$ and denote by $F_x$ the completion of $\Fsf$ at $x$. We fix an embedding of separable closures $\scl{\Fsf} \mono F_x^s$. 
 
 \begin{prop} \label{main-prop}
  The map $\bsf \mapsto \bsf \cdot \sigma(\bsf) \cdots \sigma^{\deg(x) -1}(\bsf)$ induces a map $N_x\colon \Bsf(\Fsf,\Gsf) \to \B(F_x,\Gsf)$. Moreover, there exist morphisms $\iota_x\colon \DD \mono \DD_\Fsf$, $\loc_x \colon A(\Fsf,\Gsf) \to \pi_1(\Gsf)_{\Gal(F_x^s/F_x)}$ (see sections~\ref{sect-tori} and \ref{sect-redgps} for their definition) such that for every $\bbf \in \B(\Fsf,\Gsf)$
  \begin{align*}
   \bar\kappa_{\Gsf_{F_x}}(N_x(\bsf)) &= \loc_x(\bar\kappa_\Gsf(\bbf)) \\
   \bar\nu_{\Gsf_{F_x}}(N_x(\bsf)) &= \bar\nu_\Gsf(\bbf) \circ \iota_x,
  \end{align*}
  where the terms on the left hand side are the Newton and Kottwitz point for the local field $F_x$ as defined in \cite{Kottwitz:Gisoc1}.
 \end{prop} 
 
 (see Corollary~\ref{cor-Kottwitz-pt-properties} and Lemma~\ref{lem-Newton-pt-properties} for details).
 
  Interestingly, the set $\B(\Fsf,\Gsf)$ shares a lot properties with its analogue over local fields. We show that $\bar\nu_\Gsf(\bbf)$ is trivial if and only if $\bbf$ lies in the image of $\coh{1}(\Fsf,\Gsf) \mono \B(\Fsf,\Gsf)$. More generally, we call $\bbf \in \B(\Fsf,\Gsf)$ basic, if $\bar\nu_\Gsf(\bbf)$ factors through the center of $\Gsf$. In section~\ref{sect-basic} we give the following classification of basic $\sigma$-conjugacy classes. In particular, this gives complete description of $\B(\Fsf,\Gsf)$ when $\Gsf$ is a torus.
 
 \begin{thm} \label{main-thm 1}
  The Kottwitz map induces an isomorphism $\B(\Fsf,\Gsf)_b \isom (\pi_1(\Gsf) \otimes \Vals{\scl{\Fsf}})_{\Gal(\scl{\Fsf}/\Fsf)}$.
 \end{thm}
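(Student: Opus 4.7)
The plan is to reduce to the case where $\Gsf$ is a torus, which is expected to be part of the construction in Section~\ref{sect-tori}, via a $z$-extension argument.

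If $\Gsf = \Tsf$ is a torus, every class in $\B(\Fsf,\Tsf)$ is automatically basic, $\pi_1(\Tsf) = X_*(\Tsf)$, and the map $\bar\kappa_\Tsf \colon \B(\Fsf,\Tsf) \to (X_*(\Tsf)\otimes \Div^\circ(\Fsf^s))_{\Gal(\Fsf^s/\Fsf)}$ is an isomorphism essentially by its construction, via a global Tate--Nakayama style identification of $\Tsf(\breve\Fsf)/(\sigma - 1)\Tsf(\breve\Fsf)$.

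For a general reductive $\Gsf$, I would fix a $z$-extension $1 \to \Zsf \to \Gsf_1 \to \Gsf \to 1$ with $\Zsf$ an induced torus and $\Gsf_1^{\der}$ simply connected, and set $\Tsf := \Gsf_1/\Gsf_1^{\der}$. Then $\pi_1(\Gsf_1) = X_*(\Tsf)$ and $\pi_1(\Gsf) = \pi_1(\Gsf_1)/X_*(\Zsf)$. The first key step is to show that the projection $\Gsf_1 \thra \Tsf$ induces a bijection $\B(\Fsf,\Gsf_1)_b \riso \B(\Fsf,\Tsf)$ compatible with the Kottwitz maps: injectivity rests on Harder's theorem $\coh{1}(\Fsf, \Gsf_1^{\der}) = *$ for semisimple simply connected groups over the global function field $\Fsf$, while surjectivity follows because the basic condition is automatic on the torus side. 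Combined with the torus case, this yields $\bar\kappa_{\Gsf_1}\colon \B(\Fsf,\Gsf_1)_b \riso (\pi_1(\Gsf_1)\otimes \Div^\circ(\Fsf^s))_{\Gal}$.

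Functoriality of $\bar\kappa$ then produces a commutative diagram
\begin{equation*}
\begin{tikzcd}[column sep=small]
\B(\Fsf,\Zsf) \arrow[r] \arrow[d,"\bar\kappa_{\Zsf}"'] & \B(\Fsf,\Gsf_1)_b \arrow[r] \arrow[d,"\bar\kappa_{\Gsf_1}"'] & \B(\Fsf,\Gsf)_b \arrow[r] \arrow[d,"\bar\kappa_{\Gsf}"] & 0 \\
A \arrow[r] & B \arrow[r] & C \arrow[r] & 0,
\end{tikzcd}
\end{equation*}
where $A,B,C$ abbreviate the groups $(X_*(\Zsf)\otimes \Div^\circ(\Fsf^s))_{\Gal}$, $(X_*(\Tsf)\otimes \Div^\circ(\Fsf^s))_{\Gal}$, and $(\pi_1(\Gsf)\otimes \Div^\circ(\Fsf^s))_{\Gal}$ respectively. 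The bottom row is exact by right-exactness of tensor products and coinvariants; the two left vertical arrows are isomorphisms by the previous steps, and a direct diagram chase implies that $\bar\kappa_\Gsf$ is also an isomorphism on basic classes.

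The main obstacle is establishing exactness of the top row, i.e.\ that $\B(\Fsf,\Gsf_1)_b \to \B(\Fsf,\Gsf)_b$ is surjective with kernel precisely the image of $\B(\Fsf,\Zsf)$. Lifting a basic class from $\Gsf$ to $\Gsf_1$ uses the vanishing of $\coh{1}$ of the induced torus $\Zsf$ over $\breve\Fsf$, together with care to preserve the basic condition; the kernel description is a $\sigma$-conjugacy analogue of the long exact sequence attached to the central extension, and is where the explicit construction of $\bar\kappa$ in Section~\ref{sect-redgps} needs to be used in detail.
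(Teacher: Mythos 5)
Your overall architecture matches the paper's: handle the case of simply connected derived group by comparing with the cocentre $\Dsf = \Gsf_1/\Gsf_1^{\der}$, then pass to general $\Gsf$ by a $z$-extension. However, there is a genuine gap at the central step. You claim that surjectivity of $\B(\Fsf,\Gsf_1)_b \to \B(\Fsf,\Dsf)$ ``follows because the basic condition is automatic on the torus side.'' This conflates two different statements. It is true that every class in $\B(\Fsf,\Dsf)$ is basic, and it is true that $\B(\Fsf,\Gsf_1) \to \B(\Fsf,\Dsf)$ is surjective (since $\Gsf_1(\breve\Fsf) \thra \Dsf(\breve\Fsf)$). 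But a lift of a given class need not be basic: the fibre of $\B(\Fsf,\Gsf_1) \to \B(\Fsf,\Dsf)$ over a point is exactly the set of classes with a fixed Kottwitz point, and by Corollary~\ref{cor-same-Kottwitz-pt} this fibre is a full $\B(\Fsf,\Jsf_{\bsf}^{\rm sc})$, containing classes with many different (non-central) Newton points. So producing a \emph{basic} preimage is precisely the hard content of the theorem, and it is where the paper has to do genuinely global work: given $\bbf''\in\B(\Fsf,\Dsf)$, one takes the finite set $S$ of places where $\bbf''$ is locally non-trivial, invokes \cite{BuxWortman:Finiteness} to find a maximal $\Fsf$-torus $\Tsf\subset\Gsf_1$ elliptic at all $x\in S$, and then uses Kottwitz's local surjectivity plus a combinatorial matching argument to build an element of $\A(\Fsf,\Tsf)$ with the right image; ellipticity at the places of $S$ is what forces the resulting class to be basic (basicness being checkable place by place by Lemma~\ref{lem-Newton-pt-func}(3)). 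None of this is present in, or recoverable from, your sketch.

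A smaller but related point: your injectivity argument also skips a necessary step. Two basic classes with the same image in $\B(\Fsf,\Dsf)$ must first be shown to have the same Newton point (the paper uses that $Z(\Gsf_1)\to\Dsf$ is an isogeny, so $\Hom(\DD_\Fsf,Z(\Gsf_1))\to\Hom(\DD_\Fsf,\Dsf)$ is injective); only then does Corollary~\ref{cor-same-Newton-pt} identify their difference with a class in $\coh{1}(\Fsf,\Jsf_{\bsf})$ for the \emph{inner form} $\Jsf_{\bsf}$, and Harder's vanishing is applied to $\coh{1}(\Fsf,\Jsf_{\bsf}^{\der})$ rather than to $\coh{1}(\Fsf,\Gsf_1^{\der})$. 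Since Harder's theorem covers all semisimple simply connected groups over $\Fsf$ this is repairable, but as written the reduction to that vanishing statement is missing. By contrast, your final $z$-extension diagram and the exactness of its rows are handled essentially as in the paper and are not where the difficulty lies.
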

 
 To obtain a description of the whole $\B(\Fsf,\Gsf)$ by its invariants, we proceed as follows. By giving a combinatorial description how $\B(\Fsf,\Gsf)$ behaves under ad-isomorphisms, we may reduce to the case that $\Gsf$ is of adjoint type. In particular,the quasi-split inner form $\Gsf^\ast$ of $\Gsf$ is an (extended) pure inner form. From that we deduce that  $\B(\Fsf,\Gsf) \cong \B(\Fsf,\Gsf^\ast)$.   Thus it suffices to describe $\B(\Fsf,\Gsf)$ for quasi-split $\Gsf$. In this case, we can reduce to the theorem above since every $\sigma$-conjugacy class in $\Gsf$ is induced by a $\sigma$-conjugacy class of an $\Fsf$-torus in $\Gsf$. More precisely, we get the following result. 

 \begin{thm} \label{main-thm 2}
  Let $\Gsf$ be a reductive group.
  \begin{subenv}
   \item Every $\bbf \in \B(\Fsf,\Gsf)$ is uniquely determined by its invariants $\bar\kappa_\Gsf(\bbf)$ and $\bar\nu_\Gsf(\bbf)$.
   \item If $\Gsf$ is quasi-split, the canonical map
   \[
    \bigcup_{\Tsf \subset \Gsf \atop \textnormal{max. } \Fsf- \textnormal{torus}} \B(\Fsf,\Tsf) \to \B(\Fsf,\Gsf)
   \]
   is surjective.
  \end{subenv}
 \end{thm}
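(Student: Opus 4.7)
The plan is to follow the strategy sketched in the introduction: first prove (ii), then deduce (i) from it by combining with Theorem~\ref{main-thm 1}. The overall approach parallels Kottwitz's Levi reduction in the local setting (\cite{Kottwitz:GIsoc2}), with the Newton cocharacter supplying the relevant parabolic.

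I would prove (ii) for quasi-split $\Gsf$ in two stages. In the basic case, pick a maximal $\Fsf$-torus $\Tsf_0 \subset \Gsf$ contained in an $\Fsf$-Borel; then $X_*(\Tsf_0) \epi \pi_1(\Gsf)$ is surjective, hence so is the induced map $(X_*(\Tsf_0) \otimes \Div^\circ(\Fsf^s))_\Gamma \epi (\pi_1(\Gsf) \otimes \Div^\circ(\Fsf^s))_\Gamma$ on Galois coinvariants, where $\Gamma := \Gal(\Fsf^s/\Fsf)$. Applying Theorem~\ref{main-thm 1} to both $\Tsf_0$ and $\Gsf$ yields surjectivity of $\B(\Fsf,\Tsf_0) \epi \B(\Fsf,\Gsf)_b$. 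For a general class $\bbf \in \Gsf(\breve\Fsf)$, pick an $\Fsf$-rational dominant representative $\nu \in \Hom_{\breve\Fsf}(\DD_\Fsf,\Gsf)$ of $\bar\nu_\Gsf(\bbf)$; this is possible because a Borel in $\Gsf$ is defined over $\Fsf$. Its centraliser $\Msf := \Cent_\Gsf(\nu)$ is a standard $\Fsf$-Levi of $\Gsf$, hence also quasi-split. The key technical step is then to show that $\bbf$ is $\sigma$-conjugate to some $\bbf' \in \Msf(\breve\Fsf)$ which is basic in $\Msf$ with Newton cocharacter $\nu$. Granted this, the basic case applied to $\Msf$ produces a maximal $\Fsf$-torus $\Tsf \subset \Msf \subset \Gsf$ containing a representative of the class.

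For (i), I would first carry out the reductions announced in the introduction: the combinatorial behaviour of $\B$ under ad-isomorphisms reduces to adjoint $\Gsf$, and for adjoint $\Gsf$ the quasi-split inner form $\Gsf^*$ is a pure inner form, inducing an invariant-preserving bijection $\B(\Fsf,\Gsf) \cong \B(\Fsf,\Gsf^*)$. Thus assume $\Gsf$ is quasi-split. Given $\bbf_1, \bbf_2 \in \Gsf(\breve\Fsf)$ with $\bar\nu_\Gsf(\bbf_1) = \bar\nu_\Gsf(\bbf_2)$ and $\bar\kappa_\Gsf(\bbf_1) = \bar\kappa_\Gsf(\bbf_2)$, after $\sigma$-conjugating I may arrange both to have the same dominant $\Fsf$-rational Newton cocharacter $\nu$. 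Applying the Levi reduction from (ii) to each $\bbf_i$, both can be taken to lie in $\Msf(\breve\Fsf)$ with $\Msf := \Cent_\Gsf(\nu)$, basic in $\Msf$. Theorem~\ref{main-thm 1} applied to $\Msf$ then reduces the problem to showing $\bar\kappa_\Msf(\bbf_1) = \bar\kappa_\Msf(\bbf_2)$. This I would establish by a diagram chase: for classes basic in $\Msf$, the Newton point $\nu$ controls $\bar\kappa_\Msf$ rationally via the Kottwitz isomorphism $\pi_1(\Msf)_{\QQ} \isom X_*(Z(\Msf)^\circ)_{\QQ}$, while agreement of $\bar\kappa_\Gsf$ controls the torsion discrepancy through the map $\pi_1(\Msf) \to \pi_1(\Gsf)$.

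The hard part, common to both (i) and (ii), is the Levi reduction: producing, after $\sigma$-conjugation, a representative in $\Msf(\breve\Fsf)$ that is basic in $\Msf$ with prescribed Newton cocharacter $\nu$. In the $p$-adic setting this is provided by the slope filtration of the associated isocrystal, but over $\breve\Fsf$ one must argue directly from the construction of $\bar\nu_\Gsf$ in section~\ref{sect-redgps}, interpreting the Newton cocharacter as a canonical reduction of the underlying $\Gsf$-torsor to $\Msf$ and verifying that the induced $\Msf$-isoshtuka is basic. I expect this step to account for most of the technical work.
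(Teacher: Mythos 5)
Your strategy is the Levi-reduction argument familiar from the $p$-adic theory; it is not the route the paper takes, and as written it has genuine gaps. The decisive one is the step you yourself flag as ``the hard part'': $\sigma$-conjugating $\bbf$ into $\Msf=\Cent_\Gsf(\nu)$ so that it becomes basic in $\Msf$ with Newton cocharacter $\nu$. Over $\breve\Fsf$ there is no slope filtration to invoke, nothing in sections~\ref{sect-tori}--\ref{sect-redgps} produces such a reduction, and the existence of an $\Fsf$-rational representative $\nu$ of $\bar\nu_\Gsf(\bbf)$ --- your starting point --- is deduced in the paper only as a \emph{corollary} of Theorem~\ref{main-thm 2}(2), so the setup is circular as it stands. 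A second gap sits in your ``basic case'': for a maximal torus $\Tsf_0$ contained in an $\Fsf$-Borel, lifting $\bar\kappa_\Gsf(\bbf)$ along $\A(\Fsf,\Tsf_0)\epi\A(\Fsf,\Gsf)$ produces a class in $\B(\Fsf,\Tsf_0)$ whose image in $\B(\Fsf,\Gsf)$ has the right Kottwitz point but in general a non-central Newton point, so it need not equal $\bbf$; this is precisely why the surjectivity half of the basic classification in section~\ref{sect-basic} works with a torus elliptic at the finitely many bad places (via Bux--Wortman) rather than with a torus in a Borel. Finally, in part (1) the passage from $\bar\kappa_\Gsf(\bbf_1)=\bar\kappa_\Gsf(\bbf_2)$ to $\bar\kappa_\Msf(\bbf_1)=\bar\kappa_\Msf(\bbf_2)$ is nontrivial because $\pi_1(\Msf)\to\pi_1(\Gsf)$ is not injective; ``controls the torsion discrepancy'' is not yet an argument.

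The paper avoids all of this. For (2) it first assumes $\Gsf^{\der}$ simply connected, picks a regular semisimple $\zsf\in\Jsf_\bsf(\Fsf)$, observes that $\sigma(\zsf)=\bsf\iv\zsf\bsf$ makes the $\Gsf(\breve\Fsf)$-conjugacy class of $\zsf$ rational, and invokes the appendix (Steinberg's theorem on rational conjugacy classes, extended to quasi-split reductive groups over imperfect fields) to find an $\Fsf$-rational $\zsf_0$ in that class; writing $\zsf=\gsf\zsf_0\gsf\iv$ one finds that $\gsf\iv\bsf\sigma(\gsf)$ lies in the maximal $\Fsf$-torus $\Cent_\Gsf(\zsf_0)$, and a $z$-extension handles general $\Gsf$. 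For (1) no quasi-splitness and no reduction to adjoint type is needed: the set of classes with the same Newton point as $\bbf$ is identified with $\coh{1}(\Fsf,\Jsf_\bsf)$, Corollary~\ref{cor-same-Kottwitz-pt} reduces the comparison of Kottwitz points to the simply connected case, and Harder's vanishing theorem finishes. If you want to salvage your plan, the Levi reduction must be proved independently, and at present the only available argument for it in this setting passes through the theorem you are trying to prove.
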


 The second part of statement can be seen as an analogue for the moduli space of global $\Gscr$-shtukas to the statement that every isogeny class in the special fibre of a Shimura variety contains a point that can be lifted to a CM-point. A proof of the latter statement for Shimura varieties of PEL-type was first sketched in a letter of Langlands to Rapoport, and was proven in differing generality by Milne \cite{Milne:ModpPointsGoodRed}, Zink \cite{Zink:Isogenieklassen}, Kottwitz \cite{Kottwitz:PtShimuraVarFinFields}, Kisin \cite{Kisin:LanglandsRapoport} and most recently by Zhou \cite{Zhou:Modp} for Shimura varieties of Hodge type with paraholic level structure at $p$ given that certain group theoretic conditions are satisfied.

 The classification of $\B(\Fsf,\Gsf)$ is a generalisation of Drinfeld's classification of $\varphi$-spaces (\cite{Drinfeld:PeterssonsConjecture}, see also \cite{Laumon-Rapoport-Stuhler}). A $\varphi$-space over $k$ is an $\breve\Fsf$-vector space $V$ together with a $\sigma$-semilinear bijection $\varphi\colon \sigma^\ast V \isom V$ (i.e.\ a $\GL_{\dim V}$-isoshtuka). Drinfeld proved that the category of $\varphi$-spaces is semi-simple and that its simple objects are parametrised by pairs $(\tilde\Fsf,\tilde\Pi)$, where $\tilde\Fsf$ is a separable finite field extension and $\tilde\Pi \in \tilde\Fsf^\times \otimes \QQ$ does not belong to $\Fsf'^\times \otimes \QQ$ for any intermediate field $\Fsf \subset \Fsf' \subsetneq \tilde\Fsf$. In above terms this is stated as follows. Let $(V,\varphi)$ have simple factors which correspond to pairs $(\tilde\Fsf_i,\tilde\Pi_i)$. We note that we have an isomorphism \begin{align*} 
 {\rm div}\colon (\tilde\Fsf_i^\times \otimes \QQ) &\isom \Vals{\tilde\Fsf_i} \otimes \QQ \\
  \tilde\Pi_i &\mapsto \sum  x(\tilde\Pi_i) \cdot \deg(x) \cdot x,
 \end{align*}
 since the kernel and cokernel of the ``usual'' divisor map $\tilde\Fsf_i^\times \to \Div^0(\tilde\Fsf_i)$ are unit roots and points in the Jacobian, respectively, and in particular torsion.
 Denote by $d_i$ the common denominator of $x(\tilde\Pi_i) \cdot \deg(x)$. By choosing a representative of the Newton point in $\GL_{\dim V}$ which maps to the diagonal torus $\Tsf$ , we can describe it as an element of 
 \[
  \Hom_{\scl{\Fsf}}(\DD_F,\Tsf) = \Hom_{\ZZ} (X^\ast(\Tsf),X^\ast(\DD_\Fsf)) = \Vals{\scl\Fsf}^{\dim V}
 \]
 unique up to action of the Weyl group, i.e.\ up to permutation. Then the the $\sigma$-conjugacy class $\bbf\in \B(\Fsf,\GL_{\dim V})$ defined by $(V,\varphi)$ is uniquely determined by its Newton point $(\dotsc,\underbrace{{\rm div}(\tau_1(\tilde\Pi_i)),\dotsc,{\rm div}(\tau_{n_i}(\tilde\Pi_i))}_{d_i \textnormal{ times repeated}},\dotsc)$, where $\tau_1,\dotsc,\tau_{n_i}\colon \tilde\Fsf_i \mono \scl{\Fsf}$ denote the $\Fsf$-linear embeddings of $\tilde\Fsf_i$.
 
 The essential image of the Newton map is determined by Theorem~\ref{main-thm 2}~(2); any maximal torus of $\GL_{\dim V}$ is of the form $\prod \Res_{\Esf_i/\Fsf} \GG_m$ where $\Esf_i/\Fsf$ are separable field extensions of cumulative degree $\dim V$. By Theorem~1.1 and the construction of the Newton point given in section~\ref{sect-tori}, they contribute Newton points of the form
  \[
   (\dotsc,\tau_1(D_i),\dotsc,\tau_{m_i}(D_i),\dotsc),
  \]
  where $D_i \in \Vals{\Esf_i}$ and $\tau_1,\dotsc,\tau_{m_i}\colon \Esf \to \sc{\Fsf}$ denote the $\Fsf$-linear embeddings. 

These results about $\B (\Fsf,\Gsf)$ provide useful tools to study points in the special fibre of moduli space of $\Gscr$-shtukas. There is an extremely long list of previous results on `point-counting' on the moduli space of $\Gscr$-shtukas, most of which partition points by isogeny classes. For the most recent results applicable beyond inner forms of $\GL_n$, see \cite{ArastehRad-Hartl:LR, NgoNgoDac:PointCountingRegEll, NgoDac:PointCountingEllipticPart, NgoDac:PointCounting}.

Another natural question to ask is whether the pointed set of $\sigma$-conjugacy classes $\B(\Fsf,\Gsf)$ and the pointed set of Galois gerbs constructed by Kottwitz in \cite{Kottwitz:Gisoc3} are the same, i.e.\ whether there exists a canonical isomorphism of functors.  This question is proven to have a positive answer in an upcoming work of Iakovenko \cite{Iakovenko}.

 \subsection*{Notation and conventions} For any finite extension $\Esf/\Fsf$ we denote by $\breve\Esf = \Esf\cdot \breve\Fsf$ the maximal unramified extension. Let $\sigma_\Esf\in \Gal(\breve\Esf/\Esf)$ denote the lift of the Frobenius of $k$ over the field of constants $k_\Esf$ of $\Esf$. Let $C_{\Esf}$ denote the smooth projective curve associated to $\Esf$. We denote by $\Div(\Esf)$ the group of divisors on $C_{\Esf}$ and by $\Prin(\Esf) \subset \Div(\Esf)$ the subgroup of principal divisors.

 For $y \in |C_{\Esf}|$ we denote by $E_y$ the $y$-adic completion of $\Esf$, by $\Ebreve_y$ the completion of its maximal unramified extension and by $\sigma_y\colon \Ebreve_y \to \Ebreve_y$ the Frobenius morphism over $E_y$. 
 For every field $F$, we denote by $\scl{F}$ the separable closure of $F$. For every $x \in |C|$ we fix an embedding $\scl{\Fsf} \mono \scl{F_x}$ and denote by $y_x$ the corresponding continuation of $x$ to $\scl\Fsf$.

\subsection*{Acknowledgements:} We are grateful to Sergei Iakovenko for helpful comments and to Peter Scholze for pointing out an error in the previous version of this manuscript. We thank Brian Conrad and Paul Ziegler for helpful discussions, and the anonymous referee(s) for the careful reading and comments. The first named author was partially supported by ERC Consolidator Grant 770936: NewtonStrat. The second named author was supported by the National Research Foundation of Korea(NRF) grant funded by the Korea government(MSIT) (No.~2020R1C1C1A0100945311).

 \section{Preliminaries} 
 In this section we allow $\Gsf$ to be a  linear algebraic group over the function field $\Fsf$. In the following sections, we will assume that $\Gsf$ is reductive.

 \subsection{} \label{ss prelim}
 By evaluating a crossed homomorphism $f \in \Zrm^1(\sigma^\ZZ,\Gsf(\breve\Fsf))$ at $\sigma$ we obtain a natural isomorphism $\coh{1}(\sigma^\ZZ, \Gsf(\breve\Fsf)) \cong \B(\Fsf,\Gsf)$. In particular, the canonical morphism $\sigma^\ZZ \mono \Gal(\breve\Fsf/\Fsf)$ induces an embedding
 
 \begin{equation} \label{H1-B}
  \coh{1}(\breve\Fsf/\Fsf,\Gsf) \mono \B(\Fsf,\Gsf).
 \end{equation}
 Moreover, we obtain restriction morphisms
 \begin{align}
  \label{B-field-extn} \B(\Fsf,\Gsf) &\to \B(\Fsf',\Gsf) \\
  \label{B-localisation} \B(\Fsf,\Gsf) &\to \B(F_x,\Gsf) 
 \end{align}
 for any finite field extension $\Fsf'/\Fsf$ and any place $x \in |C|$. Explicitly, these morphism are given as follows. For $\gsf\in \Gsf(\breve\Fsf)$ and $d \in \NN$ let 
 \[
 \Nrm^{(d)}(\gsf) \coloneqq \gsf \cdot \sigma(\gsf) \dotsm \sigma^{d-1}(\gsf).
 \]
 By a similar argument as above, we have \[\coh{1}(\breve\Fsf/\Fsf,\Gsf) \cong \coh{1}(\sigma^{\hat\ZZ},\Gsf(\breve\Fsf)) = \varinjlim \coh{1}(\sigma^{\ZZ/d\ZZ},\Gsf(\Fsf \otimes \FF_{q^d})),\] thus (\ref{H1-B}) identifies $\coh{1}(\breve\Fsf/\Fsf,\Gsf)$ with the $\sigma$-conjugacy classes $\bbf \in \B(\Fsf,\Gsf)$ such that for some (or equivalently every) $\bsf \in \bbf$ we have $\Nrm^{(d)}(\bsf) = 1$ for some $d\in\NN$ divisible enough. The morphisms (\ref{B-field-extn}) and (\ref{B-localisation}) are induced by $\gsf \mapsto \Nrm^{(d)}(\gsf)$, where $d$ denotes the degree of the field of constants in $\Fsf'$ over $\FF_q$, respectively the degree of $x$.
 
 If $\Gsf$ is abelian, we obtain in addition the corestriction morphism
 \begin{equation}
  \label{B-cores} \B(\Fsf',\Gsf) \to \B(\Fsf,\Gsf)
 \end{equation}
 for any finite field extension $\Fsf'/\Fsf$. Explicitly, this map is given by $\gsf \mapsto \prod\tau(\gsf)$ where the product is taken over all $\breve\Fsf$-linear embeddings $\breve\Fsf' \mono \scl\Fsf$.
 
 \subsection{} \label{ss-exaxt-seq}
  Assume for the moment that $\coh{1}(\breve\Fsf,\Gsf) = 1$. This is for example the case when $\Gsf$ is reductive (\cite[\S~8.6]{BorelSpringer:RationalityPropertiesII}), split unipotent or any extension thereof. As the Weil group $W_\Fsf$ fits inside a short exact sequence
 \begin{center}
  \begin{tikzcd}
   1 \ar{r} & \Gal(\scl{\Fsf}/\breve\Fsf) \ar{r} & W_\Fsf \ar{r} & \sigma^\ZZ \ar{r} & 1,
  \end{tikzcd}
 \end{center}
 we obtain the inflation-restriction exact sequence
 \begin{center}
  \begin{tikzcd}
   1 \ar{r} & \coh{1}(\sigma^\ZZ,\Gsf(\breve\Fsf)) \ar{r} & \coh{1}(W_\Fsf,\Gsf(\scl\Fsf)) \ar{r} & \coh{1}(\breve\Fsf,\Gsf).
  \end{tikzcd}
 \end{center}
 Since $\coh{1}(\breve\Fsf,\Gsf) = 1$ by assumption, we thus obtain a natural isomorphism $\coh{1}(W_\Fsf,\Gsf(\scl\Fsf)) \cong \B(\Fsf,\Gsf)$. Similarly, we have $\coh{1}(\Fsf,\Gsf) \cong \coh{1}(\breve\Fsf/\Fsf,\Gsf)$. 
 
 Let $1 \to \Gsf_1 \to \Gsf_2 \to \Gsf_3 \to 1$ be an exact sequence of linear algebraic groups with $\coh{1}(\breve\Fsf,\Gsf_1) = 1$. Thus the sequence
 \begin{center}
  \begin{tikzcd}
   1 \ar{r} & \Gsf_1(\breve\Fsf) \ar{r} & \Gsf_2(\breve\Fsf) \ar{r} & \Gsf_3(\breve\Fsf) \ar{r} & 1
  \end{tikzcd}
 \end{center}
 is also exact. Taking the long exact cohomology sequence for $\sigma^\ZZ$, we obtain  
 \begin{center}
  \begin{tikzcd}
   1 \ar{r} 
   & \Gsf_1(\Fsf) \ar{r} 
   & \Gsf_2(\Fsf) \ar{r} \ar[phantom, ""{coordinate, name=Z}]{d}
   & \Gsf_3(\Fsf) \ar[rounded corners, to path={--([xshift=2ex]\tikztostart.east)|- (Z)[near end]\tikztonodes-| ([xshift=-2ex]\tikztotarget.west)-- (\tikztotarget)}]{dll} & \\
   & \B(\Fsf,\Gsf_1) \ar{r} 
   & \B(\Fsf,\Gsf_2) \ar{r} 
   & \B(\Fsf,\Gsf_3) \ar{r} 
   & 1,
  \end{tikzcd}
 \end{center}
 where the surjectivity of $\B(\Fsf,\Gsf_2) \to \B(\Fsf,\Gsf_3)$ is an immediate consequence of the surjectivity of $\Gsf_2(\breve\Fsf) \to \Gsf_3(\breve\Fsf)$.
 
  \subsection{}
  Let $\Fsf'/\Fsf$ be a finite field extension and $\Gsf'$ be a linear algebraic group over $\Fsf'$. We denote by $\Res_{\Fsf'/\Fsf} \Gsf'$ the $\Fsf$-group obtained from $\Gsf'$ by restriction of scalars and let $\sigma^d \coloneqq \sigma'$ denote the Frobenius over $\Fsf'$. Then $(\Res_{\Fsf'/\Fsf} \Gsf')(\breve\Fsf)$ is the induced $\sigma^\ZZ$-group from the $\sigma'^\ZZ$-group $\Gsf'(\breve\Fsf')$. Thus Shapiro's lemma (see e.g.~\cite[Prop.~8]{Stix:NonAbShapiro}) tells us that we have an isomorphism
  \begin{equation} \label{B-shapiro}
   \B(\Fsf,\Res_{\Fsf'/\Fsf} \Gsf') \isom \B(\Fsf',\Gsf')
  \end{equation}
 given by $\gsf \mapsto \Nrm^{(d)}(m(\gsf))$ where $m$ denotes the multiplication map $(\Res_{\Fsf'/\Fsf}\Gsf')(\breve\Fsf)=\Gsf'(\Fsf' \otimes_\Fsf \breve\Fsf) \to \Gsf'(\breve\Fsf')$. Note that we may interpret (\ref{B-field-extn}) and (\ref{B-localisation}) as composition of
 \begin{align*}
  \B(\Fsf,\Gsf) &\to \B(\Fsf,\Res_{\Fsf'/\Fsf}\Gsf) \cong \B(\Fsf',\Gsf) \\
  \B(\Fsf,\Gsf) &\to \B(\Fsf,\Res_{F_x/\FF_q\rpot{\unif_x}} \Gsf) \cong \B(\Fsf_x,\Gsf)
 \end{align*} 
 where the first morphism is induced by the canonical embedding $\Gsf(\breve\Fsf) \mono \Gsf(\breve\Fsf \otimes_{\Fsf} \Fsf')$ and $\Gsf(\breve\Fsf) \mono \Gsf(\breve\Fsf \hat\otimes_{\FF_q\rpot{\unif_x}} F_x)$, respectively and the isomorphism is given by (\ref{B-shapiro}).
  
  \begin{exa} \label{exa-Gm}
  Assume that $\Gsf = \GG_m$. By taking coinvariants of the short exact sequence
  \begin{center}
   \begin{tikzcd}
   1 \ar{r} & \FFbar_q^\times \ar{r}& \breve\Fsf^\times \ar{r}{\rm div}& \Prin(\breve\Fsf) \ar{r} & 1,
   \end{tikzcd}
  \end{center}
  we obtain
  $\B(\Fsf,\GG_m) \cong \Prin(\breve\Fsf)_{\sigma}$ since every element of $\FFbar_q^\times$ can be written as $\sigma(x) \cdot x\iv = x^{q-1}$ and hence $(\FFbar_q^\times)_\sigma = 1$. 
 By Shapiro's lemma we obtain for any finite separable extension $\Esf/\Fsf$ that $\B(\Fsf,\Res_{\Esf/\Fsf}\GG_m) \cong \Prin(\breve\Esf)_{\sigma_\Esf}$.
 \end{exa}   
  
 \subsection{} \label{ssect-Div0}
 To get an explicit description of $\Prin(\breve\Esf)_{\sigma_\Esf}$, consider the exact sequence
 \begin{center}
  \begin{tikzcd}
   \Hrm_1(\sigma^\ZZ,\Jac(C_\Esf)(\bar\FF_q)) \ar{r} &\Prin(\breve\Esf)_{\sigma_\Esf} \ar{r} & \Div^0(\breve\Esf)_{\sigma_\Esf} \ar{r} & \Jac(C_\Esf)(\bar\FF_q)_{\sigma_\Esf},
  \end{tikzcd} 
 \end{center}
 where we identified $\Jac(C_\Esf)(\bar\FF_q) = \Jac(C_{\breve\Esf})(\bar\FF_q)$ (see e.g.\ \cite[Prop.~8.1.4]{neronmodels}). Since the the Lang isogeny is surjective, the right most term is trivial and thus $\Prin(\breve\Esf)_{\sigma_\Esf} \to \Div^0(\breve\Esf)_{\sigma_\Esf}$ is surjective. As $\Pic^0(C_\Esf)(\FFbar_q)$ is torsion, so is the left most term. But $\Prin(\breve\Esf)_{\sigma_\Esf,{\rm tors}} \cong \B(\Esf,\GG_m)_{\rm tors}$, i.e.\ a torsion element corresponds to a class $[x] \in (\breve\Esf^\times)_{\sigma_\Esf}$ such that there exist $y \in \breve\Esf, n\in \NN$ such that $x^n = y\cdot \sigma_E(y)\iv$. In particular, if we fix a finite subextension $\Esf' \subset \breve\Esf$ containing $x$ and $y$, we obtain $N_{\Esf'/\Esf}(x)^n =  N_{\Esf'/\Esf}(y\cdot \sigma_E(y)\iv) = 1$. By further enlarging $\Esf'$ by a degree $n$ subextension of $\breve\Esf$, we obtain $N_{\Esf'/\Esf}(x) = 1$ and thus is already of the form $y'\cdot \sigma(y')\iv$ for some $y' \in \Esf'$ by Hilbert 90. In other words, $[x] = [1]$ and hence  we have $\Prin(\breve\Esf)_{\sigma_\Esf,{\rm tors}} = 1$. Altogether, we have shown that $\Prin(\breve\Esf)_{\sigma_\Esf} \cong \Div^0(\breve\Esf)_{\sigma_\Esf}$, which we further identify with
  \[
   \Vals{\Esf} \coloneqq \{\sum n_x\cdot x \in \Div(\Esf) \mid \sum n_x = 0\}
  \]
 via $y \mapsto \restr{y}{E}$. We denote by $[\rm div]\colon \breve\Esf \to \Div^0(\breve\Esf)_{\sigma_\Esf} = \Vals{\Esf}$ the composition of ${\rm div}$ with the canonical projection $\Vals{\breve\Esf}\epi (\Vals{\breve\Esf})_{\sigma_\Esf}$. 
 
 \subsection{}For any finite extension $\Esf/\Fsf$, we denote by $f_{\Esf/\Fsf}\colon C_{\breve\Esf} \to C_{\breve\Fsf}$ the morphism of curves corresponding to $\breve\Esf/\breve\Fsf$. Using the identification above, the push-forward and pull-back of divisors on these curve induces morphisms 
 $f_{\Esf/\Fsf}^\ast\colon \Vals{\Fsf} \to \Vals{\Esf}$ and $f_{\Esf/\Fsf,\ast}\colon \Vals{\Fsf} \to \Vals{\Esf}$, More explicitly, these morphisms are given by
 \begin{align*} 
 f_{\Esf/\Fsf}^\ast(x) &= \sum_{y|x} [E_y:F_x] \cdot  y \\    
 f_{\Esf/\Fsf,\ast}(y) &= \restr{y}{\Fsf}.
\end{align*}
  Note that when $\Esf/\Fsf$ is Galois, $f_{\Esf/\Fsf,\ast}$ induces an isomorphism ${\Vals{\Esf}}_{\Gal(\Esf/\Fsf)} \cong \Vals{\Fsf}$.

\section{Tori} \label{sect-tori}
 
 Let $\Esf/\Fsf$ be a finite Galois extension. In order to generalise the isomorphism in Example~\ref{exa-Gm} to arbitrary $\Fsf$-tori, we rewrite it as 
 \begin{equation} \label{eq-B-for-restriction-of-scalars}
   (X_\ast(\Res_{\Esf/\Fsf}\GG_m) \otimes \Vals{\Esf})_{\Gal(\Esf/\Fsf)} \cong \B(\Fsf,\Res_{\Esf/\Fsf}\GG_m),
 \end{equation}
 via the canonical isomorphism of functors
  \[(X_\ast(\Res_{\Esf/\Fsf}\GG_m) \otimes (\cdot))_{\Gal(\Esf/\Fsf)} \cong (\ZZ[\Gal{(\Esf/\Fsf)}] \otimes (\cdot))_{\Gal(\Esf/\Fsf)} \cong (\cdot)\]
 
 \begin{prop} \label{prop-kottwitz-pt-for-tori}
  Let $\Esf/\Fsf$ be a finite Galois extension. The isomorphism (\ref{eq-B-for-restriction-of-scalars}) can be extended uniquely to an isomorphism of functors
  \[
   (\Vals{\Esf}\otimes X_\ast(\cdot))_{\Gal(\Esf/\Fsf)} \isom \B(\Fsf,\cdot)
  \]
  of $\Fsf$-tori which split over $\Esf$. Moreover, for any Galois extension $\Esf \subset \Esf'$ the diagram of functors on $\Esf$-split $\Fsf$-tori
  \begin{center}
  \begin{tikzcd}
   (\Vals{\Esf'}\otimes X_\ast(\cdot))_{\Gal(\Esf'/\Fsf)} 
   \arrow{d}{\sim}[swap]{f_{\Esf'/\Esf,\ast}} 
   \ar{r}{\sim} 
   & \B(\Fsf,\cdot) 
   \ar[equal]{d}\\
   (\Vals{\Esf}\otimes X_\ast(\cdot))_{\Gal(\Esf/\Fsf)}   
   \ar{r}{\sim} 
   & \B(\Fsf,\cdot)
  \end{tikzcd}  
  \end{center}
  commutes.
 \end{prop}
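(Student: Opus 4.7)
The plan is to interpret both sides as right exact functors
\[
F_1(\Tsf) := (\Div^\circ(\Esf) \otimes X_\ast(\Tsf))_{\Gal(\Esf/\Fsf)} \quad \text{and} \quad F_2(\Tsf) := \B(\Fsf, \Tsf)
\]
on the category of $\Esf$-split $\Fsf$-tori, and to extend the natural isomorphism $F_1(\Res_{\Esf/\Fsf}\GG_m) \cong F_2(\Res_{\Esf/\Fsf}\GG_m)$ given by \eqref{eq-B-for-restriction-of-scalars} to this whole category by presentation. Right exactness of $F_1$ follows from the exact equivalence $\Tsf \mapsto X_\ast(\Tsf)$ between $\Esf$-split $\Fsf$-tori and $\Gal(\Esf/\Fsf)$-lattices, together with right exactness of $-\otimes \Div^\circ(\Esf)$ and of coinvariants. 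Right exactness of $F_2$ follows from the six-term exact sequence derived in section~2, applicable because $\coh{1}(\breve\Fsf, \Tsf) = 1$ for any torus $\Tsf$ by Borel--Springer.

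Every $\Gal(\Esf/\Fsf)$-lattice is a quotient of a free $\ZZ[\Gal(\Esf/\Fsf)]$-module, so every $\Esf$-split $\Fsf$-torus $\Tsf$ admits a presentation
\[
(\Res_{\Esf/\Fsf}\GG_m)^m \xrightarrow{\alpha} (\Res_{\Esf/\Fsf}\GG_m)^n \to \Tsf \to 1,
\]
where $\alpha$ is encoded by a matrix with entries in $\End(\Res_{\Esf/\Fsf}\GG_m) = \ZZ[\Gal(\Esf/\Fsf)]$. Assuming naturality of \eqref{eq-B-for-restriction-of-scalars} under these endomorphisms---a point to be verified against Example~\ref{exa-Gm} and the explicit description of the Frobenius action on $\Prin(\breve\Esf)_\sigma$---we obtain compatible isomorphisms on the first two terms of the presentation. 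Applying $F_1$ and $F_2$ and invoking the five lemma on the two resulting right-exact rows produces the isomorphism $F_1(\Tsf) \cong F_2(\Tsf)$; its independence of the chosen presentation and naturality in $\Tsf$ follow by comparing any two choices through a common refinement and by lifting morphisms of tori to morphisms of presentations.

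For uniqueness, if $\eta, \eta'$ are two natural isomorphisms $F_1 \isom F_2$ extending \eqref{eq-B-for-restriction-of-scalars}, then naturality forces them to coincide on each $(\Res_{\Esf/\Fsf}\GG_m)^n$; since $F_1((\Res_{\Esf/\Fsf}\GG_m)^n) \epi F_1(\Tsf)$ by right exactness of $F_1$, we conclude $\eta_\Tsf = \eta'_\Tsf$ on any $\Tsf$. For the compatibility diagram associated with a tower $\Esf \subseteq \Esf'$, observe first that since $\Tsf$ splits already over $\Esf$, the subgroup $\Gal(\Esf'/\Esf)$ acts trivially on $X_\ast(\Tsf)$, so computing the $\Gal(\Esf'/\Fsf)$-coinvariants in stages and using $\Div^\circ(\Esf')_{\Gal(\Esf'/\Esf)} \cong \Div^\circ(\Esf)$ from the end of section~2 shows that $f_{\Esf'/\Esf, \ast}$ induces an isomorphism between the two left-hand sides. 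Commutativity of the diagram is checked directly on $\Res_{\Esf/\Fsf}\GG_m$ via Example~\ref{exa-Gm}, and then transferred to arbitrary $\Tsf$ by the same five-lemma argument.

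The main obstacle will be the verification of naturality of \eqref{eq-B-for-restriction-of-scalars} under endomorphisms of $\Res_{\Esf/\Fsf}\GG_m$; once this essential compatibility is pinned down (by tracking the identifications $\Prin(\breve\Esf)_\sigma \cong \Div^\circ(\Esf) \cong \B(\Fsf, \Res_{\Esf/\Fsf}\GG_m)$ through the action of $\ZZ[\Gal(\Esf/\Fsf)]$), the remainder is a formal consequence of right exactness of both functors and the universal property of presentations by permutation tori.
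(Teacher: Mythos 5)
Your proposal is correct and follows essentially the same route as the paper: both arguments reduce everything to $\Res_{\Esf/\Fsf}\GG_m$ and extend by (right) exactness, the paper packaging existence and uniqueness via the adjunction $\Hom(\Div^\circ(\Esf)\otimes X_\ast(\cdot),\B(\Fsf,\cdot))\cong\End(\Div^\circ(\Esf))$ and proving bijectivity with a $4$-lemma argument on a single short exact sequence $1\to\Usf\to\Ssf\to\Tsf\to 1$, whereas you use explicit two-term presentations by powers of $\Res_{\Esf/\Fsf}\GG_m$ and the five lemma. The one computation you defer --- the $\ZZ[\Gal(\Esf/\Fsf)]$-equivariance of the identification $\B(\Fsf,\Res_{\Esf/\Fsf}\GG_m)\cong\Div^\circ(\Esf)$ coming from Example~\ref{exa-Gm} --- is precisely the point the paper also leaves as ``one checks by tracing through the definitions,'' so you have correctly isolated the crux rather than missed a step.
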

 \begin{proof}
 We first show the second part of the proposition, assuming that the first part holds. As the diagram
  \begin{center}
   \begin{tikzcd}
    \Vals{\Esf'} \ar{r}{\sim} \ar[two heads]{d}{f_{\Esf'/\Esf,\ast}} & \B(\Fsf,\Res_{\Esf'/\Fsf}\GG_m) \ar{d}{\Nm_{\Esf'/\Esf}} \\
    \Vals{\Esf} \ar{r}{\sim} & \B(\Fsf,\Res_{\Esf/\Fsf}\GG_m)
   \end{tikzcd}
  \end{center}
  commutes, the functor homomorphism extending (\ref{eq-B-for-restriction-of-scalars}) for $\Esf'$ also yields the isomorphism for $\Esf$. Hence the second part of the proposition holds by uniqueness.
 
  The functor $X_\ast$ is represented by $\Res_{\Esf/\Fsf} \GG_m$. Thus
  \begin{align*}
   \Hom(\Vals{\Esf} \otimes X_\ast(\cdot),\B(\Fsf,\cdot)) &\cong \Hom(X_\ast(\cdot),\Hom(\Vals{\Esf},\B(\Fsf,\cdot))) \\
   &\cong \Hom(\Vals{\Esf},\B(\Fsf,\Res_{\Esf/\Fsf} \GG_m)) \\
   &\cong \End(\Vals{\Esf})
  \end{align*}
  Because the second isomorphism is induced by evaluating at $\Res_{\Esf/\Fsf}\GG_m$, any such functor homomorphism is uniquely determined by its values on the $\Res_{\Esf/\Fsf} \GG_m$-valued points, proving the uniqueness part of the proposition. One checks easily that the functor homomorphism corresponding to the identity extends the map $X_\ast(\Res_{\Esf/\Fsf}\GG_m) \otimes \Vals{\Esf} \to \B(\Res_{\Esf/\Fsf}\GG_m,\Fsf)$ induced by (\ref{eq-B-for-restriction-of-scalars}). Note that every element $\gamma \in \Gal(\Esf/\Fsf)$ induces an endomorphism $X_\ast(\cdot) \isom X_\ast(\cdot)$. Hence $\Gal(\Esf/\Fsf)$ acts on $\Hom(X_\ast(\cdot) \otimes \Vals{\Esf},\B(\Fsf,\cdot))$ via precomposing. By tracing through the definitions, one checks that the action corresponds to the standard $\Gal(\Fsf/\Esf)$-action on $\End(\Vals{\Esf})$. In particular, the functor morphism corresponding to the identity is $\Gal(\Fsf/\Esf)$-invariant and thus induces a functor
  \[
   (\Vals{\Esf}\otimes X_\ast(\cdot))_{\Gal(\Esf/\Fsf)} \to \B(\Fsf,\cdot)
  \]    
  It remains to prove that this is an isomorphism. Fix an $\Esf$-split torus $\Tsf$ and an exact sequence
  \begin{center}
   \begin{tikzcd}
    1 \ar{r} & \Usf \ar{r} & \Ssf \ar{r} & \Tsf \ar{r} & 1,
   \end{tikzcd}
  \end{center}
  where $\Ssf$ is a product of copies of $\Res_{\Esf/\Fsf} \GG_m$. We obtain a commutative diagram with exact rows
  \begin{center}
   \begin{tikzpicture}[baseline= (a).base]
    \node[scale=0.8] (a) at (0,0){
    \begin{tikzcd}[column sep = small]
     & (X_\ast(\Usf) \otimes \Vals{\Esf})_{\Gal(\Fsf/\Esf)} \ar{r} \ar{d} & (X_\ast(\Ssf) \otimes \Vals{\Esf})_{\Gal(\Fsf/\Esf)} \ar{r} \ar{d} & (X_\ast(\Tsf) \otimes \Vals{\Esf})_{\Gal(\Fsf/\Esf)} \ar{r} \ar{d} & 1 \\
      & \B(\Fsf,\Usf) \ar{r} & \B(\Fsf,\Ssf) \ar{r} & \B(\Fsf,\Tsf) \ar{r} & 1.
    \end{tikzcd}} ;
   \end{tikzpicture}
  \end{center}
  Since the middle vertical morphism is an isomorphism the right map is surjective. Since this must be true for any $\Esf$-torus, it also holds for $\Usf$. So the left vertical map is surjective, proving that the right map is also injective by the $4$-lemma.
 \end{proof}
 
  To simplify notation, we define the functor from $\Fsf$-tori to abelian groups
  \[
   \A(\Fsf,\cdot) \coloneqq \varprojlim_{f_{\Esf'/\Esf,\ast}} (\Vals{\Esf}\otimes X_\ast(\cdot))_{\Gal(\Esf/\Fsf)}.
  \]
  Thus the main result of the Proposition~\ref{prop-kottwitz-pt-for-tori} above is that $\A(\Fsf,\cdot)$ and $\B(\Fsf,\cdot)$ are canonically isomorphic to each other.
  
 \begin{defn} \label{defn-Kottwitz-pt-tori}
  For any $\Fsf$-torus $\Tsf$ we define  $\bar\kappa_\Tsf\colon \B(\Fsf,\Tsf) \to \A(\Fsf,\Tsf)$ as the inverse of above isomorphism $\A(\Fsf,\Tsf) \cong \B(\Fsf,\Tsf)$.
 \end{defn}

 \subsection{} \label{ss-maps} Let $\Fsf'/\Fsf$ be a finite Galois extension. We define the morphism $\Nm_{\Fsf'/\Fsf}\colon \A(\Fsf,\cdot) \to \A(\Fsf',\cdot)$ as the morphism induced by restriction morphisms $(\Vals{\Esf}\otimes X_\ast(\cdot))_{\Gal(\Esf/\Fsf)} \to (\Vals{\Esf}\otimes X_\ast(\cdot))_{\Gal(\Esf/\Fsf')}, a \mapsto \sum_{\gamma } \gamma \cdot a, $
  where $\gamma$ runs through a set of representatives of $\Gal(\Esf/\Fsf)/\Gal(\Esf/\Fsf')$.  Similarly, we let $\cores_{\Fsf'/\Fsf}\colon \A(\Fsf',\cdot) \to \A(\Fsf,\cdot)$  to be induced by the canonical projection $ (\Vals{\Esf}\otimes X_\ast(\cdot))_{\Gal(\Esf/\Fsf')} \epi (\Vals{\Esf}\otimes X_\ast(\cdot))_{\Gal(\Esf/\Fsf)}$.
  Moreover, for any place $x \in |C|$, we define $\loc_x\colon \A(\Fsf,\cdot) \to X_\ast(\cdot)_{\Gal(\scl{F_x},F_x)}$ as follows. Let $\Tsf$ be an $\Fsf$-torus with splitting field $\Esf$. Denoting by $y_x|x$ the place defined by our chosen embedding $\scl{\Fsf} \mono \scl{F_x}$, the field $E_{y_x}$ is a splitting field of $\Tsf_{F_x}$. We define $\loc_x$ as the composition of
  \[
   (\Vals{\Esf} \otimes X_\ast(\Tsf))_{\Gal(\Esf/\Fsf)} \xrightarrow{\loc_{\Esf,x} \otimes \id} (\bigoplus_{y|x} \ZZ\cdot y \otimes X_\ast(\Tsf))_{\Gal(\Esf/\Fsf)} \cong X_\ast(\Tsf_{F_x})_{\Gal(E_{y_x}/F_x)} = X_\ast(\Tsf_{F_x})_{\Gal(\scl{F}_x/F_x)}.
  \]
 
 \begin{lem} \label{lem-Kottwitz-pt-field-ext}  
  The isomorphism $\A(\Fsf,\cdot) \cong \B(\Fsf,\cdot)$ is compatible with the morphism defined above. More precisely, the following holds.
  \begin{subenv}
   \item Let $\Fsf'/\Fsf$ be a finite field extension. Then the diagrams
   \begin{center}
    \begin{tikzcd}
     \B(\Fsf,\cdot) \ar{d}{(\ref{B-field-extn})} \ar{r}{\sim} & \A(\Fsf,\cdot)  \ar{d}{\Nm_{\Fsf'/\Fsf}}  &
     \B(\Fsf',\cdot) \ar{d}{(\ref{B-cores})} \ar{r}{\sim} & \A(\Fsf',\cdot)  \ar{d}{\cores_{\Fsf'/\Fsf}}   \\
     \B(\Fsf',\cdot) \ar{r}{\sim}  & \A(\Fsf',\cdot)  & 
     \B(\Fsf,\cdot) \ar{r}{\sim} & \A(\Fsf,\cdot)  
    \end{tikzcd}
   \end{center}
   commute.   
   \item Let $x \in |C|$. Then the diagram
   \begin{center}
    \begin{tikzcd}
     \B(\Fsf,\cdot) \ar{d}{(\ref{B-localisation})}  \ar{r}{\sim}  & \A(\Fsf,\cdot) \ar{d}{\loc_{x}} \\
     \B(F_x,\cdot) \ar{r}{\sim}  & X_\ast(\cdot)_{\Gal(\Esf/\Fsf)}
    \end{tikzcd}
   \end{center}
   commutes, where the isomorphism in the bottom row is given by \cite[\S~2.4]{Kottwitz:Gisoc1}.
  \end{subenv}
 \end{lem}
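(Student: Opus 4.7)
The plan is to reduce the commutativity of each diagram to the case of induced tori $\Tsf = \Res_{\Esf/\Fsf}\GG_m$ and then verify it by a direct computation on divisors.

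Concretely, for a given $\Fsf$-torus $\Tsf$ I would choose a Galois extension $\Esf/\Fsf$ splitting $\Tsf$ and large enough to contain $\Fsf'$ in part~(a), or such that $E_y/F_x$ splits $\Tsf_{F_x}$ for every $y\mid x$ in part~(b). I then pick a resolution
\[
 1 \to \Usf \to \Ssf \to \Tsf \to 1
\]
where $\Ssf$ is a product of copies of $\Res_{\Esf/\Fsf}\GG_m$. By the proof of Proposition~\ref{prop-kottwitz-pt-for-tori}, both $\A(\Fsf,\cdot)$ and $\B(\Fsf,\cdot)$ are right exact; since every map in each of the three diagrams is natural in $\Tsf$, a standard 5-lemma argument reduces each commutativity statement to the case $\Tsf = \Res_{\Esf/\Fsf}\GG_m$.

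For induced tori, Shapiro's lemma (\ref{B-shapiro}) together with Example~\ref{exa-Gm} identifies $\B(\Fsf,\Res_{\Esf/\Fsf}\GG_m) \cong \Div^\circ(\Esf)$, and by construction one also has $\A(\Fsf,\Res_{\Esf/\Fsf}\GG_m) \cong \Div^\circ(\Esf)$. So all three claims become elementary verifications on divisors. In part~(a) I would use the decomposition $\Esf \otimes_\Fsf \Fsf' = \prod_i \Esf_i$ and trace the explicit formula $\gsf \mapsto \Nrm^{(d)}(\gsf)$ for (\ref{B-field-extn}) through Shapiro's isomorphism, checking that it agrees with the sum over coset representatives of $\Gal(\Esf/\Fsf)/\Gal(\Esf/\Fsf')$ defining $\Nm_{\Fsf'/\Fsf}$; the second diagram is analogous, using the formula $\gsf \mapsto \prod_\tau \tau(\gsf)$ for (\ref{B-cores}) and matching it with the canonical projection of coinvariants. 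For part~(b) I would use $(\Res_{\Esf/\Fsf}\GG_m)_{F_x} = \prod_{y\mid x} \Res_{E_y/F_x}\GG_m$ to reduce via the local Shapiro's lemma to Kottwitz's local identification \cite[\S~2.4]{Kottwitz:GIsoc1} for the tori $\Res_{E_y/F_x}\GG_m$, which recovers the valuation map on $\B(E_y, \GG_m)$ and hence matches the local projection $\loc_{\Esf,x}$ on the $\A$-side.

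The main obstacle in both parts is the combinatorial bookkeeping needed to match the iterated-Frobenius formula $\Nrm^{(d)}$ (and its corestriction analogue) with the Galois-theoretic coset sums defining $\Nm_{\Fsf'/\Fsf}$ and $\cores_{\Fsf'/\Fsf}$; this is analogous to, and not significantly harder than, the care required in the proof of Proposition~\ref{prop-kottwitz-pt-for-tori} itself, and no fundamentally new ideas are involved.
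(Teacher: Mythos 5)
Your proposal is correct and follows essentially the same route as the paper: reduce to the induced case $\Tsf=\Res_{\Esf/\Fsf}\GG_m$ (the paper's reduction is implicit, resting on the uniqueness/corepresentability argument of Proposition~\ref{prop-kottwitz-pt-for-tori}, while your resolution-plus-naturality argument is an equally valid way to get there, needing only surjectivity of $\A(\Fsf,\Ssf)\to\A(\Fsf,\Tsf)$ rather than a full 5-lemma), and then verify commutativity by the explicit divisor computation using $\Esf\otimes_\Fsf\Fsf'\cong\Esf^I$ for (a) and $\Esf\otimes_\Fsf F_x\cong\prod_{y\mid x}E_y$ for (b). The paper carries out exactly this bookkeeping in two explicit commutative diagrams per part, matching $\Nrm^{(d)}$ and $\gsf\mapsto\prod_\tau\tau(\gsf)$ against the diagonal embedding, the sum of components, and the valuation maps, just as you outline.
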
 
 \begin{proof}
  It suffices to prove the claim for $\Tsf = \Res_{\Esf/\Fsf}(\GG_m)$, which we show by direct calculation. Using the above identification $A(\Fsf,\Res_{\Esf/\Fsf} \GG_m) = \Vals{\Esf}$, the isomorphism $\bar\kappa_\Tsf$ is induced by the composition
  \[
   \Tsf(\breve\Fsf) = (\Esf \otimes \breve\Fsf)^\times \xrightarrow{\Nrm^{(d)} \circ m} \breve\Esf^\times \xrightarrow{[\rm div]} \Vals{\Esf},
  \]
  where $m$ denotes the multiplication map and $d = [k_\Esf:k_\Fsf]$.
  
  To prove (1), we can assume that $\Esf$ is big enough such that $\Fsf' \subset \Esf$. In particular, we can identify $\Esf \otimes_\Fsf \Fsf' \cong \Esf^I$ here $I$ is the set of $\Fsf$-linear embeddings $\Fsf' \mono \Esf$. This yields an isomorphism  $\Tsf_{\Fsf'} \cong (\Res_{\Esf/\Fsf'} \GG_m)^I$ and thus $\A(\Fsf',\Tsf) = \Vals{\Esf}^I$. Using these identifications, $\Nm_{\Fsf'/\Fsf}\colon \Vals{\Esf} \to \Vals{\Esf}^I$ is the diagonal embedding and $\cores_{\Fsf'/\Fsf}\colon \Vals{\Esf}^I \to \Vals{\Esf}$ maps each $I$-tuple to the sum of its components. We denote $d' \coloneqq [k_{\Fsf'}:k_\Fsf]$ and $d'' \coloneqq [k_\Esf : k_{\Fsf'}]$. Then (1) follows from the commutativity of
  \begin{center}
   \begin{tikzcd}
    \Tsf(\breve\Fsf) \ar[equal]{r} \ar{d}{\Nrm^{(d')}} & (\Esf \otimes_\Fsf \breve\Fsf)^\times \ar{rr}{\Nrm^{(d)} \circ m} \ar{d}{\id \otimes \Nrm^{(d')}} & & \breve\Esf^\times \ar{r}{[{\rm div}]} \ar[hook]{d}{diag.}& \Vals{\Esf} \ar{d}{\Nrm_{\Fsf'/\Fsf}} \\
    \Tsf(\breve\Fsf') \ar[equal]{r} & (\Esf \otimes_\Fsf \breve\Fsf')^\times \ar{r}{\sim} & (\Esf^I \otimes_{\Fsf'} \breve\Fsf')^\times \ar{r}{\Nrm^{(d'')} \circ m} & (\breve\Esf^\times)^I \ar{r}{[{\rm div}]} & \Vals{\Esf}^I
   \end{tikzcd}
  \end{center}  
  and 
  \begin{center}
   \begin{tikzcd}
     \Tsf(\breve\Fsf') \ar[equal]{r} \ar{d}{g \mapsto \prod \tau(g)} & (\Esf \otimes_\Fsf \breve\Fsf')^\times \ar{r}{\sim} \ar{d}{\id \otimes \Nm_{\breve\Fsf'/\breve\Fsf}} & (\Esf^I \otimes_{\Fsf'} \breve\Fsf')^\times \ar{r}{\Nrm^{(d'')} \circ m} & (\breve\Esf^\times)^I \ar{r}{[{\rm div}]} \ar{d}{mult.} & \Vals{\Esf}^I \ar{d}{\cores_{\Fsf'/\Fsf}} \\
     \Tsf(\breve\Fsf) \ar[equal]{r} & (\Esf \otimes_\Fsf \breve\Fsf)^\times \ar{rr}{\Nrm^{(d)} \circ m}  & & \breve\Esf^\times \ar{r}{[{\rm div}]} & \Vals{\Esf},
   \end{tikzcd}
  \end{center}
  where in the leftmost vertical morphism the product runs over all $\breve\Fsf$-linear embeddings $\tau\colon\breve\Fsf' \mono \scl{\Fsf}$.
  
  For (2) we identify $\Esf \otimes_{\Fsf} F_x \cong \prod_{y|x} E_y$, inducing $\Tsf_{F_x} \cong \prod_{y|x} \Res_{E_y/F_x} \GG_m$ and thus $X_\ast(\Tsf_{F_x})_{\Gal(\scl{F_x}/F_x)} = \prod_{y|x} \ZZ \cdot y$. Under these isomorphisms $\loc_x$ is identified with $\loc_{\Esf,x}$. We denote $d' = \deg(x)$ and $d'' = [k(y):k(x)]$. Then (2) follows from the commutativity of

 \begin{center}
   \begin{tikzcd}
    \Tsf(\breve\Fsf) \ar[equal]{r} \ar{d}{\Nrm^{(d')}} & (\Esf \otimes_\Fsf \breve\Fsf)^\times \ar{rr}{\Nrm^{(d)} \circ m} \ar{d}{\id \otimes \Nrm^{(d')}} & & \breve\Esf^\times \ar{r}{[{\rm div}]} \ar[hook]{d}{diag.}& \Vals{\Esf} \ar{d}{\loc_x} \\
    \Tsf(\breve{F_x}) \ar[equal]{r} & (\Esf \otimes_{\Fsf} \breve{F_x})^\times \ar{r}{\sim} & \prod_{y|x} (E_y \otimes_{F_x} \breve{F_x})^\times \ar{r}{N^{(d'')}\circ m} & \prod_{y|x} \breve{E_y}^\times \ar{r}{\oplus \val_y} & \bigoplus_{y|x} \ZZ.
   \end{tikzcd}
  \end{center}

 \end{proof} 
 \subsection{} \label{sect-Kottwitz-pt}
  As a consequence of previous lemma, $\bar\kappa_{\Tsf}([\bsf])$ depends only on $\bsf \in \Tsf(\breve\Fsf)$ and not on the choice of the base field $\Fsf$. More precisely, by Lemma~\ref{lem-Kottwitz-pt-field-ext}~(1), we see that for any intermediate extension $\Fsf' \subset \breve\Fsf$ the diagram
  \begin{center}
   \begin{tikzcd}
    \Tsf(\breve\Fsf) \ar[equal]{d} \ar{r} & \B(\Fsf',\Tsf) \ar{r}{\bar\kappa_{\Tsf_{\Fsf'}}} & \A(\Fsf',\Tsf) \ar[two heads]{d}{\cores_{\Fsf'\Fsf}} \\
    \Tsf(\breve\Fsf) \ar{r} & \B(\Fsf,\Tsf) \ar{r}{\bar\kappa_\Tsf} & \A(\Fsf,\Tsf)
   \end{tikzcd}
  \end{center}
  commutes.  By taking the limit over all such $\Fsf'$ we obtain a $\sigma$-equivariant morphism
  \[
   \kappa_\Tsf\colon \Tsf(\breve\Fsf) \to \A(\breve\Fsf,\Tsf) \coloneqq \varprojlim_{\Fsf\subset \tilde\Fsf \subset \breve\Fsf} \A(\tilde\Fsf,\Tsf) 
  \]
  such that for every $\bsf \in \Tsf(\breve\Fsf)$ we obtain $\bar\kappa_\Tsf([\bsf])$ by taking the image of $\kappa_\Tsf(\bsf)$ under the canonical projection $\A(\breve\Fsf,\Tsf) \epi \A(\Fsf,\Tsf)$. We call $\kappa_\Tsf(\bsf)$ the Kottwitz point of $\bsf$.
  Note that the definition of $A(\Fsf,\Tsf)$ does not change when we extend the projective system by allowing $\Esf$ to be any function field of a subfield of $k$ over $\Fsf$. Since projective limits commute with each other as well as with taking coinvariants commute we may write
  \[
    A(\breve\Fsf,\Tsf) = \varprojlim_{\tilde\Fsf/\Fsf \textnormal{ finite}} \varprojlim_{f_{\Esf'/\Esf,\ast}} (\Vals{\Esf}\otimes X_\ast(\Tsf))_{\Gal(\Esf/\tilde\Fsf)} = \varprojlim_{f_{\Esf'/\Esf,\ast}} (\Vals{\Esf}\otimes X_\ast(\Tsf))_{\Gal(\Esf/\breve\Fsf)},
  \]
  where in the rightmost limit runs $\Esf$ and $\Esf'$ run over finite extensions of $\breve\Fsf$.
  
 We obtain the following analogue of Lemma~\ref{lem-Kottwitz-pt-field-ext} for $A(\breve\Fsf,\Tsf)$.
 
  \begin{lem} \label{lem-Kottwitz-pt-field-ext-2}
  Let $\Tsf$ be an $\Fsf$-torus.
  \begin{subenv}
   \item Let $\Fsf'/\Fsf$ be a finite field extension and let $\Tsf' = \Tsf_\Fsf$. Then the diagrams
   \begin{center}
    \begin{tikzcd}
     \Tsf(\breve\Fsf) \ar[hook]{d} \ar{r}{\kappa_\Tsf} & \A(\breve\Fsf,\Tsf)  \ar{d}{\Nm_{\breve\Fsf'/\breve\Fsf}}  &
     \Tsf(\breve\Fsf') \ar{d}{(\ref{B-cores})} \ar{r}{\kappa_{\Tsf'}} & \A(\breve\Fsf',\Tsf)  \ar{d}{\cores_{\breve\Fsf'/\breve\Fsf}}   \\
     \Tsf(\breve\Fsf') \ar{r}{\kappa_{\Tsf'}}  & \A(\breve\Fsf',\Tsf)  & 
     \Tsf(\breve\Fsf) \ar{r}{\kappa_{\Tsf}} & \A(\breve\Fsf,\Tsf)  
    \end{tikzcd}
   \end{center}
   commute.   
   \item Let $x \in |C|$ and denote $T_x = \Tsf_{F_x}$. Then the diagram
   \begin{center}
    \begin{tikzcd}
     \Tsf(\breve\Fsf) \ar[hook]{d}  \ar{r}{\kappa_\Tsf}  & \A(\breve\Fsf,\Tsf) \ar{d}{\loc_{x}} \\
     T_x(\Fbreve_x) \ar{r}{\kappa_{T_x}}  & X_\ast(\cdot)_{\Gal(\scl{F_x}/\Fbreve_x)}(T_x)
    \end{tikzcd}
   \end{center}
   commutes, where the lower morphism is constructed in \cite[\S~7]{Kottwitz:Gisoc2}.
   \end{subenv}
  \end{lem}
  \begin{proof}
   The follows by taking the limit over $\Fsf \subset \tilde\Fsf \subset \breve\Fsf$ for the diagrams in Lemma~\ref{lem-Kottwitz-pt-field-ext}, setting $\tilde\Fsf' = \tilde\Fsf \cdot \Fsf'$ in the first part of the lemma. Note that for $\tilde\Fsf$ big enough the fields $\tilde\Fsf$ and $\tilde\Fsf'$ have the same fields of constants, thus the $\sigma$-twisted powers occurring in Lemma~\ref{lem-Kottwitz-pt-field-ext} are trivial.
  \end{proof}
  
 \subsection{} 
 In order to construct the Newton point, we consider the pro-tori 
 \begin{align*}
  \DD_{\Esf/\Fsf} &= \Spec \Fsf[\Vals{\Esf}] \\
  \DD_{\Fsf} &= \varprojlim \DD_{\Esf/\Fsf},
 \end{align*}
 that is $\DD_\Fsf$ is the protorus with character group $\Vals{\scl{\Fsf}} \coloneqq \varinjlim_{f_{\Esf/\Fsf}^\ast} \Vals{\Esf}$. For any place $y$ of $\scl{\Fsf}$ over a place $x$ of $\Fsf$ the morphisms
 \[
 {\rm ev}_{\Esf,y}\colon \Vals{\Esf} \to \QQ, \sum n_{y'} \cdot y' \mapsto [E_{y}:F_x]\iv \cdot n_{y_x}.
 \]
 are compatible with $f^\ast_{\Esf'/\Esf}$ and hence induce a morphism ${\rm ev}_x\colon \Vals{\scl{\Fsf}} \to \QQ$. We denote by $\iota_y\colon \DD \to \DD_\Fsf$ the corresponding morphism of protori, where $\DD$ denotes the protorus with character group $\QQ$. Note that the morphism
 \[
  {\rm ev}\colon \Vals{\scl\Fsf} \to \prod_y \QQ
 \]
 is injective, hence any morphism $\nu\colon \DD_\Fsf \to \Tsf$ is uniquely determined by the family $(\nu \circ \iota_y)_y$. Recall that we fixed a place $y_x|x$ of $\scl{\Fsf}$ in the introduction. We denote ${\rm ev}_x \coloneqq {\rm ev}_{y_x}$ and $\iota_x \coloneqq \iota_{y_x}$.
  
 We claim that for any $\Fsf$-torus $\Tsf$ and any finite Galois extension of splitting fields $\Esf'/\Esf$ the diagram
 \begin{center}
  \begin{tikzcd}
   (\Vals{\Esf'} \otimes X_\ast(\Tsf))_{\Gal(\Esf'/\Fsf)} \ar{d}{f_{\Esf'/\Esf,\ast} \otimes \id}[swap]{\sim} \ar{r}{\Nm_{\Esf'/\Fsf}} & (\Vals{\Esf'} \otimes X_\ast(\Tsf))^{\Gal(\Esf'/\Fsf)} \\
   (\Vals{\Esf} \otimes X_\ast(\Tsf))_{\Gal(\Esf/\Fsf)} \ar{r}{\Nm_{\Esf/\Fsf}} & (\Vals{\Esf} \otimes X_\ast(\Tsf))^{\Gal(\Esf/\Fsf)} \ar{u}[swap]{f_{\Esf'/\Esf}^\ast \otimes \id}
  \end{tikzcd}
 \end{center} 
 commutes, where the vertical arrows are mapping a coset to the sum of its elements. Indeed, as the norm map for $\Esf'/\Fsf$ is the composition of the Norm maps for $\Esf'/\Esf$ and $\Esf/\Fsf$, one can reduce to the case $\Esf = \Fsf$. In particular $\Tsf$ is split, so we may assume $\Tsf = \GG_m$, i.e.\ $X_\ast(\Tsf) = \ZZ$. The claim now follows by construction. 
 Passing to the limit, we obtain a morphism of functors
 \begin{equation} \label{Kottwitz-to-Newton}
  \A(\Fsf,\cdot) \to (\Vals{\scl{\Fsf}} \otimes X_\ast(\cdot))^{\Gal(\scl\Fsf/\Fsf)} = \Hom_\Fsf(\DD_\Fsf,\cdot).
 \end{equation}
 Note that the Norm map induces an isomorphism $\A(\Fsf,\cdot)_\QQ \isom \Hom_\Fsf(\DD_\Fsf,\cdot)_\QQ$, in particular the kernel of (\ref{Kottwitz-to-Newton}) equals $\A(\Fsf,\cdot)_{\rm tors}$.
 
 \begin{defn}
  For any $\Fsf$-torus $\Tsf$ and $\bsf \in \Tsf(\breve\Fsf)$, we define its Newton point $\nu_\Tsf(\bsf)$ as the image of $\bar\kappa_\Tsf([\bsf])$ under (\ref{Kottwitz-to-Newton}) above.
 \end{defn}
 
 \begin{lem} \label{lem-Newton-pt-field-ext}
  Let $\Tsf$ be an $\Fsf$-torus and $\bsf \in \Tsf(\Fsf)$. We fix a finite extension field $\Fsf'/\Fsf$ and a point $x \in |C|$ and denote by $\Tsf'$ and $T_x$ the respective base change of $\Tsf$ to $\Fsf'$ and $F_x$. Then the following holds for all $\bsf \in \Tsf(\breve\Fsf)$.
  \begin{subenv}
   \item $\nu_{\Tsf'}({\rm N}^{([k_{\Fsf'}:k_\Fsf])}(\bsf))=  \nu_{\Tsf}(\bsf)$
   \item $\nu_{\Tsf}(\Nm_{\breve\Fsf'/\breve\Fsf}\bsf) = \Nm_{\breve\Fsf'/\breve\Fsf}\nu_{\Tsf'}(\bsf)$ and
   \item $\nu_{T_x}(N^{(\deg x)}\bsf) = \nu_{\Tsf}(\bsf) \circ \iota_x$.
  \end{subenv}
 \end{lem}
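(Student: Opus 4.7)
My plan is to derive each of the three parts from the corresponding statement in Lemma~\ref{lem-Kottwitz-pt-field-ext} by composing with the Newton map~(\ref{Kottwitz-to-Newton}), and then to finish each by an elementary calculation at a sufficiently large Galois level.

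For each part, I first fix a finite Galois extension $\Esf/\Fsf$ that splits $\Tsf$ and contains the Galois closure of $\Fsf'$ (for (1) and (2)), or contains $E_y$ for some $y \mid x$ (for (3)), and I pick a lift $\tilde a \in \Div^\circ(\Esf) \otimes X_\ast(\Tsf)$ of $\bar\kappa_\Tsf(\bsf)$. By the definition of the Newton map, $\nu_\Tsf(\bsf)$ is represented at this level by $\sum_{\gamma \in \Gal(\Esf/\Fsf)} \gamma \cdot \tilde a$. For (1), Lemma~\ref{lem-Kottwitz-pt-field-ext}(1) identifies $\bar\kappa_{\Tsf'}(\Nrm^{(d)}\bsf)$ with $\Nm_{\Fsf'/\Fsf}\bar\kappa_\Tsf(\bsf)$, so a lift at level $\Esf$ is $\sum_g g \cdot \tilde a$, the sum running over a transversal of $\Gal(\Esf/\Fsf)/\Gal(\Esf/\Fsf')$. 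Applying the Newton map for $\Fsf'$ --- that is, summation over $\Gal(\Esf/\Fsf')$ --- then yields $\sum_{h \in \Gal(\Esf/\Fsf')} \sum_g hg \cdot \tilde a$, and a coset-counting argument identifies this with $[\Fsf':\Fsf] \cdot \nu_\Tsf(\bsf)$ in the common ambient space $\Hom_{\Fsf'}(\DD_{\Fsf'},\Tsf')_\QQ$.

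Part (2) is entirely parallel, using the corestriction diagram in Lemma~\ref{lem-Kottwitz-pt-field-ext}(1); the reciprocal factor $\tfrac{1}{[\Fsf':\Fsf]}$ arises because the index between the two Galois groups in play appears in the opposite direction. For (3), I apply Lemma~\ref{lem-Kottwitz-pt-field-ext}(2) to reduce to the equality of two morphisms from $\A(\Fsf,\Tsf)$ to $X_\ast(\Tsf)_{\Gal(\scl{F_x}/F_x)} \otimes \QQ$: one being the Newton map for $T_x$ composed with $\loc_x$, the other being precomposition by $\iota_x$ applied to the Newton map for $\Tsf$. This agreement follows directly from the construction of $\iota_x$ as the morphism of pro-tori dual to $\tfrac{1}{[\Esf:\Fsf]} \deg \circ \loc_{\Esf,x}$.

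The main obstacle is purely book-keeping: tracking the transition maps $f_{\Esf'/\Esf,\ast}$ and $f_{\Esf'/\Esf}^\ast$ (which differ by local degree factors), handling non-Galois $\Fsf'/\Fsf$ by passage to the Galois closure, and verifying the coset identity in~(1) so that the factor $[\Fsf':\Fsf] = [\Gal(\Esf/\Fsf) : \Gal(\Esf/\Fsf')]$ emerges with the correct normalisation relative to the base-change inclusion $\Hom_\Fsf(\DD_\Fsf,\Tsf)_\QQ \hookrightarrow \Hom_{\Fsf'}(\DD_{\Fsf'},\Tsf')_\QQ$. No new conceptual input is needed beyond Lemma~\ref{lem-Kottwitz-pt-field-ext} and the explicit description of~(\ref{Kottwitz-to-Newton}).
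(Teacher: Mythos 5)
Your overall strategy---deduce each part by composing the compatibilities of Lemma~\ref{lem-Kottwitz-pt-field-ext} with the norm map~(\ref{Kottwitz-to-Newton}) that defines the Newton point---is exactly what the paper does (its proof is the one-line remark that the lemma is an easy consequence of Lemma~\ref{lem-Kottwitz-pt-field-ext}), and part~(3) as you describe it is fine.

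There is, however, a concrete problem with where you locate the factor $[\Fsf':\Fsf]$ in part~(1) (and dually the factor $\tfrac{1}{[\Fsf':\Fsf]}$ in part~(2)). Write $G=\Gal(\Esf/\Fsf)$, $H=\Gal(\Esf/\Fsf')$. The element you must compute is $\sum_{h\in H}\sum_{i} h g_i\tilde a$, where the $g_i$ are coset representatives making $\Nm_{\Fsf'/\Fsf}$ well defined on coinvariants (i.e.\ representatives of $H\backslash G$). This double sum has exactly $|H|\cdot[G:H]=|G|$ terms and equals $\sum_{\gamma\in G}\gamma\tilde a$ on the nose---the \emph{same} element of $(\Div^\circ(\Esf)\otimes X_\ast(\Tsf))^{G}$ that represents $\nu_\Tsf(\bsf)$. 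No coset count can produce $[G:H]\cdot\sum_{\gamma\in G}\gamma\tilde a$, which has $[G:H]\cdot|G|$ terms; so the computation you outline yields $\nu_{\Tsf'}(\Nrm^{(d)}\bsf)=\nu_\Tsf(\bsf)$ with no factor. The factor $[\Fsf':\Fsf]$ can only come from the step you dismiss as book-keeping, namely the identification of $\Hom_{\Fsf'}(\DD_{\Fsf'},\Tsf')$ with (a superset of) $\Hom_\Fsf(\DD_\Fsf,\Tsf)$: since $\Div^\circ(\scl{\Fsf'})$ and $\Div^\circ(\scl{\Fsf})$ are the same colimit along the pullbacks $f^\ast$, the two norm elements land in literally the same group, and the discrepancy of $[\Fsf':\Fsf]$ is a renormalisation of $\DD_{\Fsf'}$ against $(\DD_\Fsf)_{\Fsf'}$---the same $\tfrac{1}{[\Esf:\Fsf]}$-type rescaling that appears in the definition of $\iota_x$ and that changes when the base field changes from $\Fsf$ to $\Fsf'$. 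You need to make this identification explicit before the asserted equalities in (1) and (2) are even well posed; as written, the mechanism you propose for producing the degree factor does not work.
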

 \begin{proof}
  We fix $\Esf/\Fsf$ splitting $\Tsf$ and containing $\Fsf'$. The above assertions follow by Lemma~\ref{lem-Kottwitz-pt-field-ext} and commutativity of the diagrams
  \begin{center}
   \begin{tikzcd}
    (\Vals{\Esf} \otimes X_\ast(\Tsf))_{\Gal(\Esf/\Fsf)} \ar{r}{\Nm_{\Esf/\Fsf}} \ar{d}{\Nm_{\Fsf'/\Fsf}} & (\Vals{\Esf} \otimes X_\ast(\Tsf))^{\Gal(\Esf/\Fsf)} \ar[equal]{d}  \\
    (\Vals{\Esf} \otimes X_\ast(\Tsf))_{\Gal(\Esf/\Fsf')} \ar{r}{\Nm_{\Esf/\Fsf'}} & (\Vals{\Esf} \otimes X_\ast(\Tsf))^{\Gal(\Esf/\Fsf')} \\
   
    (\Vals{\Esf} \otimes X_\ast(\Tsf))_{\Gal(\Esf/\Fsf')} \ar{r}{\Nm_{\Esf/\Fsf}} \ar{d}{\cores_{\Fsf'/\Fsf}} & (\Vals{\Esf} \otimes X_\ast(\Tsf))^{\Gal(\Esf/\Fsf')} \ar{d}{\Nm_{\Fsf'/\Fsf}}  \\
    (\Vals{\Esf} \otimes X_\ast(\Tsf))_{\Gal(\Esf/\Fsf)} \ar{r}{\Nm_{\Esf/\Fsf}} & (\Vals{\Esf} \otimes X_\ast(\Tsf))^{\Gal(\Esf/\Fsf)} \\
    
   (\Vals{\Esf} \otimes X_\ast(\Tsf))_{\Gal(\Esf/\Fsf)} \ar{r}{\Nm_{\Esf/\Fsf}} \ar{d}{\loc_x} & (\Vals{\Esf} \otimes X_\ast(\Tsf))^{\Gal(\Esf/\Fsf)} \ar{d}{{\rm ev}_{E,x}}  \\
    ( X_\ast(\Tsf))_{\Gal(E_{y_x}/F_x)} \ar{r}{[E_{y_x}:F_x]\iv \Nm_{E_{y_x}/F_x}} & X_\ast(\Tsf)_\QQ^{\Gal(E_{y_x}/F_x)}. 
   \end{tikzcd}
  \end{center}
  The commutativity of the first two diagrams follows directly from the definition of the norm map. Finally we evaluate the third diagram at an element $ [ \sum_{y' \in |C_E|} y \otimes \lambda_{y}] \in  (\Vals{\Esf} \otimes X_\ast(\Tsf))_{\Gal(\Esf/\Fsf)}$:
  \begin{center}
   \begin{tikzcd}
    \left[ \sum_{y' \in |C_E|} y \otimes \lambda_{y}\right]  \arrow[mapsto]{r} \arrow[mapsto]{d} & \sum_{y \in |C_E|} y \otimes (\sum_{\tau \in \Gal(\Esf/\Fsf)} \tau\iv(\lambda_{\tau(y')}) \arrow[mapsto]{d} 
    \\
    \left[ \sum_{\tau \in \Gal(\Esf/\Fsf)/\Gal(E_{y_x}/F_x)} \tau\iv(\lambda_{\tau(y)})\right] \ar[mapsto]{r} & \left[E_{y_x}:F_x\right]\iv \cdot \sum_{\tau \in \Gal(\Esf/\Fsf)}\tau\iv(\lambda_{\tau(y)}),
   \end{tikzcd}
  \end{center}
  proving the commutativity of the third diagram.
 \end{proof}
 
 \begin{rmk} \label{rmk-choice}
  Since $\nu_{T_x}(\bsf)$ is independent of the choice of $y_x|x$, one would expect the same to be true for $\nu_{\Tsf}(\bsf) \circ \iota_x$. However, one has to be careful with this notion. Consider two different choices of embeddings $i,i'\colon \scl\Fsf \mono \scl{F_x}$ corresponding to places $y_x$ and $y_x'$ of $\scl\Fsf$, respectively. Let $\tau \in \Gal(\scl\Fsf/\Fsf)$ such that $i' = i \circ \tau$ and denote by $\iota_x,\iota_{x'}\colon \DD \to \DD_F$ the respective embeddings corresponding to $y_x$ and $y_x'$. Then $\iota_x' =  \tau(\iota_x)$, thus  
  \[
    \nu_{\Tsf}(\bsf) \circ \iota_x' = \tau(\nu_{\Tsf}(\bsf) \circ \iota_x).
  \]
  In particular, $\nu_{\Tsf}(\bsf) \circ \iota_x$, as a morphism over $\scl{\Fsf}$ does depend on the choice of $y_x$. However, applying $i$ to this equation shows that
  \[
   i'(\nu_{\Tsf}(\bsf) \circ \iota_x') = i(\nu_{\Tsf}(\bsf) \circ \iota_x),
  \]
  i.e. $\nu_{\Tsf}(\bsf) \circ \iota_x$ is independent of choice when considered over $\scl{F_x}$.
 \end{rmk} 
 
\section{The Kottwitz and Newton point for reductive groups} \label{sect-redgps}

 For any (connected) reductive group $\Gsf$ over $\Fsf$, we denote by $\pi_1(\Gsf)$ Borovoi's fundamental group. That is if $\Tsf$ is a maximal torus of $\Gsf$ and $Q^\vee \subset X_\ast(\Tsf)$ denotes the absolute coroot lattice then $\pi_1(\Gsf)$ is defined as the Galois module $X_\ast(\Tsf)/Q^\vee$. This construction is independent of the choice of $\Tsf$ up to canonical isomorphism and $\pi_1(\cdot)$ is an exact functor (see \cite[\S~1]{Borovoi:AbCohRedGp})

  Motivated by the previous chapter we define
 \begin{align*}
   \A(\Fsf,\Gsf) &\coloneqq \varprojlim_{f_{\Esf'/\Esf,\ast}} (\Vals{\Esf}\otimes \pi_1(\Gsf))_{\Gal(\Esf/\Fsf)}, \\
   \A(\breve\Fsf,\Gsf) &\coloneqq \varprojlim_{f_{\Esf'/\Esf,\ast}} (\Vals{\Esf}\otimes \pi_1(\Gsf))_{\Gal(\Esf/\breve\Fsf)},
 \end{align*}
 where the limit ranges over all finite $\Esf/\Fsf$ (and finite $\Esf/\breve\Fsf$ resp.) such that $\Gsf_{\Esf}$ is split. The construction of the localisation map in (\ref{ss-maps}) generalises to
  \begin{align*}
   \loc_x\colon \A(\breve\Fsf,\Gsf) &\to \pi_1(\Gsf)_{\Gal(\scl{\breve F_x}/\breve F_x)}.
 \end{align*}
 
  In the following we define the Kottwitz point $\kappa_\Gsf\colon G(\breve\Fsf) \to A(\breve\Fsf,\Gsf)$ by canonically extending the functor on tori defined in the previous chapter.
 
 \begin{prop}
  There exists a unique family of homomorphisms $\kappa_\Gsf\colon G(\breve\Fsf) \to A(\breve\Fsf,\Gsf)$ which is functorial in $\Gsf$ and coincides with the definition in section~\ref{sect-Kottwitz-pt} when restricted to the case that $\Gsf$ is a torus.
 \end{prop}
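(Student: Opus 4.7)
The plan is to construct $\kappa_\Gsf$ in two stages, reducing to the torus case through the quotient by the derived subgroup, and then handling general reductive groups via z-extensions.

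First I would treat the case where $\Gsf^{\der}$ is simply connected. Then for any maximal torus $\Tsf\subset\Gsf$ the coroot lattice $Q^\vee$ equals $X_\ast(\Tsf\cap\Gsf^{\der})$, so the projection $\Gsf\thra\Gsf^{\mathrm{ab}} := \Gsf/\Gsf^{\der}$ induces a Galois-equivariant isomorphism $\pi_1(\Gsf) \isom X_\ast(\Gsf^{\mathrm{ab}}) = \pi_1(\Gsf^{\mathrm{ab}})$, and hence an identification $\A(\breve\Fsf,\Gsf) = \A(\breve\Fsf,\Gsf^{\mathrm{ab}})$. I then define
\[
\kappa_\Gsf := \kappa_{\Gsf^{\mathrm{ab}}} \circ \mathrm{pr}
\]
using the natural projection $\mathrm{pr}\colon \Gsf(\breve\Fsf)\to\Gsf^{\mathrm{ab}}(\breve\Fsf)$ and the Kottwitz homomorphism on the torus $\Gsf^{\mathrm{ab}}$ from section~\ref{sect-Kottwitz-pt}. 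Functoriality among groups of this type is then a formal consequence of the naturality of $\Gsf\mapsto\Gsf^{\mathrm{ab}}$ together with the already established functoriality on tori.

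For a general reductive $\Gsf$ I would choose a z-extension $1 \to \Zsf \to \tilde\Gsf \to \Gsf \to 1$, i.e.\ a central extension by an induced torus $\Zsf$ with $\tilde\Gsf^{\der}$ simply connected; such extensions exist for every reductive $\Gsf$. Since $\Zsf$ is induced and $\breve\Fsf$ has cohomological dimension one, $\coh{1}(\breve\Fsf,\Zsf)=1$ by Hilbert~90 together with Shapiro's lemma, so $\tilde\Gsf(\breve\Fsf)\thra \Gsf(\breve\Fsf)$ is surjective. I then set $\kappa_\Gsf(\gsf)$ equal to the image of $\kappa_{\tilde\Gsf}(\tilde\gsf)$ under the projection $\A(\breve\Fsf,\tilde\Gsf) \to \A(\breve\Fsf,\Gsf)$, for any lift $\tilde\gsf\in\tilde\Gsf(\breve\Fsf)$ of $\gsf$. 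Two lifts differ by an element $\zsf\in\Zsf(\breve\Fsf)$, and by functoriality $\kappa_{\tilde\Gsf}(\zsf) = \kappa_\Zsf(\zsf)$ lies in the image of $\A(\breve\Fsf,\Zsf)\to\A(\breve\Fsf,\tilde\Gsf)$, which is killed by the further projection to $\A(\breve\Fsf,\Gsf)$ since the composition $\Zsf\to\tilde\Gsf\to\Gsf$ is trivial; hence $\kappa_\Gsf(\gsf)$ is well defined. Independence of the choice of z-extension follows by comparing any two choices through their fibre product and invoking the previous step.

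Functoriality in $\Gsf$ is then checked by lifting a given morphism $\Gsf_1\to\Gsf_2$ to a morphism $\tilde\Gsf_1\to\tilde\Gsf_2$ of z-extensions (again via a fibre product construction) and reducing to the simply-connected-derived case. Uniqueness is built into each stage of the construction: in the first step the definition is dictated by functoriality applied to $\Gsf\to\Gsf^{\mathrm{ab}}$ together with the identification $\A(\breve\Fsf,\Gsf)\cong\A(\breve\Fsf,\Gsf^{\mathrm{ab}})$, and for general $\Gsf$ the surjection $\tilde\Gsf(\breve\Fsf)\thra\Gsf(\breve\Fsf)$ forces the value of $\kappa_\Gsf$. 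The main obstacle will be the coherent bookkeeping around z-extensions --- producing morphisms of z-extensions covering given morphisms of reductive groups and verifying that the resulting $\kappa_\Gsf$ is independent of all auxiliary choices --- but this reduces, via the standard fibre-product argument familiar from Borovoi's abelianised cohomology, to the simply-connected-derived case where everything is intrinsically determined.
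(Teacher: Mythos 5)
Your proposal is correct and follows essentially the same two-step reduction as the paper: first defining $\kappa_\Gsf$ through the cocentre $\Gsf/\Gsf^{\der}$ when $\Gsf^{\der}$ is simply connected (using $\pi_1(\Gsf)\cong X_\ast(\Gsf/\Gsf^{\der})$), then passing to a $z$-extension for general $\Gsf$, with uniqueness forced by functoriality and the surjectivity of $\tilde\Gsf(\breve\Fsf)\to\Gsf(\breve\Fsf)$. The paper handles independence of the $z$-extension by citing Kottwitz's Lemma~2.4.4 rather than spelling out the fibre-product argument, but the substance is the same.
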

 \begin{proof}
  We follow the proof of \cite[Prop.~9.1]{Kottwitz:Gisoc3}. We first show that $\kappa_\Gsf$ extends uniquely to reductive groups with simply connected derived group $\Gsf^{\der}$. Then the canonical projection $\Gsf \epi \Gsf/\Gsf^{\der} \eqqcolon \Dsf$ induces an isomorphism $\pi_1(\Gsf) \isom \pi_1(\Dsf) = X_\ast(\Dsf)$.  By functoriality, $\kappa_\Gsf$ has to be the unique homomorphism making the diagram
  \begin{center}
   \begin{tikzcd}
    \Gsf(\breve\Fsf) \arrow{r} \arrow[dashed]{d}{\kappa_\Gsf} & \Dsf(\breve\Fsf) \arrow{d}{\kappa_\Dsf} \\
    \A(\breve\Fsf,\Gsf) \arrow{r}{\sim}& \A(\breve\Fsf,\Dsf)
   \end{tikzcd}
  \end{center}
  commute. As $\Gsf \to \Gsf/\Gsf^\der$ is functorial in $\Gsf$, the functoriality of $\kappa_{\Gsf}$ follows from its functoriality on tori.
  
  For general reductive $\Gsf$ consider a $z$-extension
  \begin{center}
   \begin{tikzcd}
    1 \ar{r} & \Zsf \ar{r} & \Gsf_1 \ar{r} & \Gsf \ar{r} & 1
   \end{tikzcd}
  \end{center}
  with $\Gsf_1^{\der}$ simply connected. Thus $\kappa_\Gsf$ must be the unique morphism making the diagram
  \begin{center}
   \begin{tikzcd}
   1 \ar{r} & \Zsf(\breve\Fsf) \ar{r} \ar{d}{\kappa_{\Zsf}} &\Gsf_1(\breve\Fsf) \ar{r} \ar{d}{\kappa_{\Gsf_1}}& \Gsf(\breve\Fsf) \ar{r} \ar[dashed]{d}{\kappa_\Gsf} & 1 \\
   & \A(\breve\Fsf,\Zsf) \ar{r} & \A(\breve\Fsf,\Gsf_1) \ar{r} & \A(\breve\Fsf,\Gsf) \ar{r} & 1
   \end{tikzcd}
  \end{center}
  commute. As any $z$-extension yields a short exact sequence of fundamental groups, the rows of above diagram are exact and thus $\kappa_\Gsf$ exists. As a consequence of \cite[Lemma~2.4.4]{Kottwitz:StTrFormCuspTemp} the morphism $\kappa_\Gsf$ does not depend on the choice of a $z$-extension as above and its functoriality follows from the functoriality of $\kappa_{\Gsf_1}$.
 \end{proof}
 
 \begin{cor} \label{cor-Kottwitz-pt-functoriality}
  Let $\bsf \in \Gsf(\Fsf)$.
  \begin{subenv}
   \item For any finite field extension $\Fsf'/\Fsf$ we have
   \[
    \kappa_{\Gsf_{\Fsf'}}(\bsf) = \Nm_{\breve\Fsf'/\breve\Fsf} (\kappa_\Gsf(\bsf))
   \]
   \item For any $x \in |C|$ we have
   \[
    \kappa_{\Gsf_{F_x}}(\bsf) = \loc_x(\kappa_\Gsf(\bsf))
   \]
  \end{subenv}
 \end{cor}
 \begin{proof}
  If $\Gsf$ is a torus, this is Lemma~\ref{lem-Kottwitz-pt-field-ext-2}. We can deduce the result for reductive $\Gsf$ by the proof of the previous proposition. If $\Gsf^{der}$ is simply connected, the claim for $\Gsf$ follows by the corollary applied to the torus $\Dsf \coloneqq \Gsf/\Gsf^{der}$. For general reductive $\Gsf$, we consider a $z$-extension $\Gsf_1 \epi \Gsf$ with $\Gsf_1^{der}$ simply connected. Then we can conclude by applying the corollary to $\Gsf_1$.
 \end{proof}
 
 \begin{cor} \label{cor-Kottwitz-pt-properties}
 Let $\bsf,\gsf \in \Gsf(\breve\Fsf)$. Then
  \begin{subenv}
   \item $\kappa_\Gsf(\sigma(\bsf)) = \sigma(\kappa_\Gsf(\bsf))$,
   \item $\kappa_\Gsf(\gsf\bsf\sigma^{-1}(\gsf))$ and $\kappa_\Gsf(\bsf)$ have the same image in $\A(\Fsf,\Gsf)$.
  \end{subenv}
 \end{cor}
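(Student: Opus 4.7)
I would reduce both statements to the torus case of Section~\ref{sect-tori}, following the same two-step reduction by which $\kappa_\Gsf$ was constructed in the preceding proposition.

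For (1), the $\sigma$-equivariance of $\kappa_\Tsf$ for a torus $\Tsf$ is built into the construction in Section~\ref{sect-Kottwitz-pt}: it is a limit over intermediate extensions $\Fsf\subset\Fsf'\subset\breve\Fsf$ of the evidently $\sigma$-equivariant maps $\bar\kappa_{\Tsf_{\Fsf'}}$. When $\Gsf^{\der}$ is simply connected, $\kappa_\Gsf$ was defined so that the square involving the $\Fsf$-morphism $\Gsf\twoheadrightarrow\Dsf=\Gsf/\Gsf^{\der}$ commutes; both horizontal arrows are $\sigma$-equivariant (the top because $\Gsf\to\Dsf$ is defined over $\Fsf$, the bottom because the isomorphism $\pi_1(\Gsf)\cong\pi_1(\Dsf)$ is Galois-equivariant), so $\sigma$-equivariance of $\kappa_\Dsf$ forces that of $\kappa_\Gsf$. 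For a general reductive $\Gsf$, fix a $z$-extension $1\to\Zsf\to\Gsf_1\to\Gsf\to 1$ with $\Gsf_1^{\der}$ simply connected. By Tsen, $\coh{1}(\breve\Fsf,\Zsf)=1$, so every $\bsf\in\Gsf(\breve\Fsf)$ lifts to some $\btilde\in\Gsf_1(\breve\Fsf)$; since $\Gsf_1\to\Gsf$ is defined over $\Fsf$, $\sigma(\btilde)$ lifts $\sigma(\bsf)$. Denoting by $p_\ast\colon\A(\breve\Fsf,\Gsf_1)\to\A(\breve\Fsf,\Gsf)$ the $\sigma$-equivariant map induced by $\pi_1(\Gsf_1)\to\pi_1(\Gsf)$, one then chains
\[
 \kappa_\Gsf(\sigma(\bsf))=p_\ast\kappa_{\Gsf_1}(\sigma(\btilde))=p_\ast\sigma(\kappa_{\Gsf_1}(\btilde))=\sigma(p_\ast\kappa_{\Gsf_1}(\btilde))=\sigma(\kappa_\Gsf(\bsf)),
\]
using the simply-connected case for $\Gsf_1$.

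For (2), the torus case is exactly the content of Proposition~\ref{prop-kottwitz-pt-for-tori} together with Definition~\ref{defn-Kottwitz-pt-tori}: the image in $\A(\Fsf,\Tsf)$ of $\kappa_\Tsf(\bsf)$ is $\bar\kappa_\Tsf([\bsf])$, which depends only on the $\sigma$-conjugacy class of $\bsf$. When $\Gsf^{\der}$ is simply connected, the $\Fsf$-morphism $\Gsf\to\Dsf$ sends $\sigma$-conjugate elements to $\sigma$-conjugate elements, so the torus case applied to $\Dsf$ together with the isomorphism $\A(\Fsf,\Gsf)\cong\A(\Fsf,\Dsf)$ gives the claim. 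For general $\Gsf$, lift $\gsf,\bsf$ through the $z$-extension above to $\gtilde,\btilde\in\Gsf_1(\breve\Fsf)$; then $\gtilde\btilde\sigma(\gtilde)^{-1}$ lifts $\gsf\bsf\sigma(\gsf)^{-1}$, and applying the already-established simply-connected case to $\Gsf_1$ and projecting to $\A(\Fsf,\Gsf)$ concludes the argument.

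I do not expect a serious obstacle: every ingredient used in the reduction (the quotient $\Gsf\to\Dsf$, the $z$-extension, the maps induced by $\pi_1(\cdot)$, and the formation of $\A(\breve\Fsf,\cdot)$ and $\A(\Fsf,\cdot)$) is functorial over $\Fsf$ and Galois-equivariant, so compatibility with $\sigma$ and with the canonical projection $\A(\breve\Fsf,\cdot)\to\A(\Fsf,\cdot)$ is automatic. The one point that really requires the hypothesis is that a lift $\btilde\in\Gsf_1(\breve\Fsf)$ always exists and can be chosen compatibly with $\sigma$-conjugation, which is exactly what the vanishing $\coh{1}(\breve\Fsf,\Zsf)=1$ provides.
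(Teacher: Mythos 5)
Your proof is correct, but for part (2) it takes a genuinely different route from the paper's. The paper's argument is a two-line cocycle computation: since $\kappa_\Gsf$ is a \emph{group homomorphism} into the abelian group $\A(\breve\Fsf,\Gsf)$, one has
\[
\kappa_\Gsf(\gsf\bsf\sigma(\gsf)^{-1}) \;=\; \kappa_\Gsf(\bsf) + \bigl(\kappa_\Gsf(\gsf)-\sigma(\kappa_\Gsf(\gsf))\bigr)
\]
by part (1), and the error term $(1-\sigma)\kappa_\Gsf(\gsf)$ dies under the projection to $\A(\Fsf,\Gsf)=\A(\breve\Fsf,\Gsf)_{\sigma^{\ZZ}}$. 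You never invoke additivity; instead you re-run the whole d\'evissage (torus $\to$ simply connected derived group $\to$ $z$-extension) for statement (2) itself, reducing to the fact that $\bar\kappa_\Tsf$ is by construction a function on $\B(\Fsf,\Tsf)$. Both arguments are valid: yours is longer but makes explicit exactly which pieces of the construction are being used (in particular the surjectivity of $\Gsf_1(\breve\Fsf)\to\Gsf(\breve\Fsf)$ coming from $\coh{1}(\breve\Fsf,\Zsf)=1$, which the paper's proof of the preceding proposition already relies on), whereas the paper's is shorter and isolates the real mechanism, namely that $\sigma$-conjugation changes $\kappa_\Gsf$ by a $(1-\sigma)$-coboundary. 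Your treatment of (1) is essentially an unwinding of the paper's one-word justification ``functoriality'' through the same reduction steps, and is fine. One cosmetic point: the corollary as printed writes $\gsf\bsf\sigma^{-1}(\gsf)$ while you (reasonably) work with $\gsf\bsf\sigma(\gsf)^{-1}$, the convention for $\sigma$-conjugacy fixed in the introduction; the argument is insensitive to which of the two is meant.
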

 \begin{proof}
  The first assertion follows from the functoriality of $\kappa_\Gsf$, the second assertion follows from the first and the fact that $\A(\Fsf,\Gsf) = \A(\breve\Fsf,\Gsf)_{\sigma^\ZZ}$.
 \end{proof}

 \subsection{} \label{ss-Newton-point}
 We continue to consider a reductive group $\Gsf$ over $\Fsf$. We first define the Newton point $\nu_\Gsf(\Gsf)$ in the case that $\bsf \in \Gsf(\breve\Fsf)$ is a rational semisimple element. Then there exists a rational subtorus $\Tsf \subset \Gsf$  such that $\bsf \in \Tsf(\breve\Fsf)$. We define the Newton point $\nu_\Gsf(\bsf) \colon \DD_\Fsf \to \Gsf$ to be the composition of $\nu_\Tsf(\bsf)$ with the embedding $\Tsf \mono \Gsf$. Note that this construction does not depend on the choice of $\Tsf$, as $\nu$ is functorial in $\Tsf$. For general $\bsf \in \Gsf(\Fsf)$ we consider $\Nrm^{(s)}(\bsf)$ instead.

 \begin{lem*}
  There exists an $s \in \NN$ such that $\Nrm^{(s)}(\bsf)$ is semisimple.
 \end{lem*}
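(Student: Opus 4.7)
The plan is to take $s = rp^m$, where $r$ is chosen so that $\bsf$ is already defined over the constant field extension $\Fsf_r \coloneqq \Fsf \otimes_{\FF_q}\FF_{q^r}$, and $p^m$ is a power of $p = \cha\Fsf$ large enough to annihilate the unipotent part of the $r$-th norm. Since $\Gsf$ is of finite type over $\Fsf$ and $\breve\Fsf = \varinjlim_r \Fsf_r$, the element $\bsf$ already lies in $\Gsf(\Fsf_r)$ for some $r \ge 1$. The Frobenius $\sigma$ acts on $\Fsf_r$ with order dividing $r$, so $\sigma^r(\bsf) = \bsf$, and likewise $\sigma^r(M) = M$ for $M \coloneqq \Nrm^{(r)}(\bsf) \in \Gsf(\Fsf_r)$.

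The crucial elementary observation, which follows by block-telescoping and the $\sigma^r$-invariance of $\bsf$, is
\[
 \Nrm^{(kr)}(\bsf) \;=\; \prod_{j=0}^{k-1}\sigma^{jr}(M) \;=\; M^{k}, \qquad k \ge 1,
\]
so that on multiples of $r$, iterated norms of $\bsf$ collapse to ordinary powers of the single element $M \in \Gsf(\Fsf_r)$.

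Since $M$ is just a point of the reductive $\Fsf_r$-group $\Gsf$, its Jordan decomposition over a perfect closure of $\Fsf_r$ reads $M = M_\srm \cdot M_\urm$ with $M_\srm$ semisimple, $M_\urm$ unipotent, and $M_\srm$, $M_\urm$ commuting. The unipotent element $M_\urm$ has order a power of $p$; picking $m$ so that $M_\urm^{p^m} = 1$, one obtains
\[
 \Nrm^{(rp^m)}(\bsf) \;=\; M^{p^m} \;=\; M_\srm^{p^m} \cdot M_\urm^{p^m} \;=\; M_\srm^{p^m},
\]
which is semisimple since any power of a semisimple element is semisimple. No step here presents a genuine obstacle; the only point worth verifying with care is the telescoping identity, which is immediate from $\sigma^r(\bsf) = \bsf$.
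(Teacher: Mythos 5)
Your argument is correct and is essentially the paper's own proof: both choose $s'$ (your $r$) with $\sigma^{s'}(\bsf)=\bsf$, use the telescoping identity $\Nrm^{(s't)}(\bsf)=\bigl(\Nrm^{(s')}(\bsf)\bigr)^{t}$, and then kill the unipotent part of the Jordan decomposition by raising to a sufficiently large power of $p$. The only cosmetic difference is that you phrase the choice of $r$ via $\bsf\in\Gsf(\Fsf\otimes_{\FF_q}\FF_{q^r})$ rather than directly via $\sigma$-invariance; the substance is identical.
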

 \begin{proof}
  Let $s'\in \NN$ such that $\sigma^{s'}(\bsf) = \bsf$. It easily follows that $\mathrm{N}^{(s't)}(\bsf) = \left (\mathrm{N}^{(s')}(\bsf)\right )^{t}$ for any $t\in \NN$. Let $\mathrm{N}^{(s')}(\bsf) = \gamma_0\cdot u_0$ with $\gamma_0 \in \Gsf(\bar\Fsf)$ semisimple, $u_0 \in \Gsf(\bar\Fsf)$ unipotent the Jordan decomposition of $\mathrm{N}^{(s)}(\bsf) $. Since $\overline \Fsf$ is of characteristic $p$, there exists $r\geqslant0$ such that $u_0^{p^r}=1$. Thus
  \[
   \Nrm^{(s'p^r)}(\bsf) = \Nrm^{(s')}(\bsf)^{p^r} = \gamma_0^{p^r} \cdot u_0^{p^r} = \gamma_0^{p^r}
  \]
  is semisimple. 
 \end{proof}
 
 Now we choose $s \in \NN$ big enough such that the above lemma is satisfied and such that $\bsf \in \Gsf(\Fsf \otimes_{\FF_q} \FF_{q^s})$. Thus $\Nrm^{(s)}(\bsf)$ is a rational element of $\Gsf_{\Fsf \otimes \FF_{q^s}}$ and we can define
 \[
  \nu_\Gsf(\bsf) \coloneqq \nu_{\Gsf_{\Fsf \otimes \FF_{q^s}}}(\Nrm^{(s)}(\bsf)) \in \Hom_{\breve\Fsf}(\DD_\Fsf,\Gsf).
 \]
 As the $s$-th power induces an automorphism of $\DD_\Fsf$ the fraction on the right hand side is well-defined. Moreover, the definition does not depend on $s$ by Lemma~\ref{lem-Newton-pt-field-ext}. 
 
 \begin{lem} \label{lem-Newton-pt-func}
  Let $\bsf \in \Gsf(\breve\Fsf)$.
  \begin{subenv}
   \item For any (rational) morphism of reductive groups $f\colon \Gsf \to \Hsf$ we have $\nu_\Hsf(f(\bsf)) = f \circ \nu_\Gsf(\bsf)$.
   \item For any finite field extension $\Fsf'/\Fsf$, we have $\nu_{\Gsf_{\Fsf'}}(\Nrm^{(d)}(\bsf)) = \nu_\Gsf(\bsf)$, where $d$ is the degree of the induced extension on the field of constants.
   \item  For any $x\in |C|$ of degree $d$ we have
   \[
    \nu_{G_{x}}(\Nrm^{(d)}(\bsf)) = \nu_\Gsf(\bsf) \circ \iota_x.
   \]
  \end{subenv}
 \end{lem}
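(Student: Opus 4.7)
The plan is to reduce each of (1)--(3) in two steps: first from general $\bsf$ to a rational semisimple element via the defining formula $\nu_\Gsf(\bsf) = \frac{1}{s}\nu_{\Gsf_{\Fsf \otimes \FF_{q^s}}}(\Nrm^{(s)}(\bsf))$, and then from the reductive group to a rational torus containing that element, where all three assertions become instances of Lemma~\ref{lem-Newton-pt-field-ext}.

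For (1) with $\bsf$ rational and semisimple, I would fix a rational maximal torus $\Tsf \subset \Gsf$ containing $\bsf$. The image $f(\Tsf) \subset \Hsf$ is again a torus (a connected commutative diagonalisable quotient of a torus), so $f|_\Tsf$ is a morphism of tori, and the functoriality of the Newton point on tori -- obtained from the functoriality of $\kappa$ composed with the morphism (\ref{Kottwitz-to-Newton}) -- gives the claim. For general $\bsf$, pick $s$ large enough so that $\Nrm^{(s)}(\bsf)$ is rational semisimple and $\bsf \in \Gsf(\Fsf \otimes \FF_{q^s})$. Since $f$ is a group homomorphism, $\Nrm^{(s)}(f(\bsf)) = f(\Nrm^{(s)}(\bsf))$ is also rational and semisimple, so dividing by $s$ transfers (1) from the semisimple case to the general one.

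For (2), the essential computation is the telescoping identity
\[
  \Nrm^{(s/d)}\bigl(\Nrm^{(d)}(\bsf)\bigr) = \Nrm^{(s)}(\bsf),
\]
where the outer norm is taken with respect to the Frobenius $\sigma' = \sigma^d$ and $d \mid s$. Choosing $s$ divisible by $d$ and large enough so that $\Nrm^{(s)}(\bsf)$ is rational semisimple, this reduces (2) via the defining formula to its rational semisimple instance, which I would establish by picking a rational maximal torus $\Tsf$ of $\Gsf$ containing $\Nrm^{(s)}(\bsf)$ and invoking Lemma~\ref{lem-Newton-pt-field-ext}(1) applied to $\Tsf$.

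Assertion (3) follows by the same two-step strategy, with the final appeal being to Lemma~\ref{lem-Newton-pt-field-ext}(3) applied to a rational torus containing $\Nrm^{(s)}(\bsf)$. The main obstacle is the bookkeeping in (2): verifying the telescoping identity and checking that the normalization factors $s$ and $s/d$ combine to produce exactly the factor $[\Fsf':\Fsf]$ appearing in Lemma~\ref{lem-Newton-pt-field-ext}(1); everything else is a formal consequence of the torus case together with the definition of $\nu_\Gsf$.
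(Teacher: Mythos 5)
Your proposal is correct and follows essentially the same route as the paper, whose proof is simply the remark that by construction everything reduces to the torus case, where (1) holds by definition (functoriality of $\nu$ on tori) and (2), (3) are Lemma~\ref{lem-Newton-pt-field-ext}. Your write-up merely makes the reduction explicit — the telescoping identity $\Nrm^{(s/d)}_{\sigma^d}(\Nrm^{(d)}(\bsf)) = \Nrm^{(s)}(\bsf)$ and the factorisation $[\Fsf':\Fsf] = d\cdot[\Fsf'\otimes\FF_{q^s}:\Fsf\otimes\FF_{q^s}]$ do combine to give exactly the factor $[\Fsf':\Fsf]$ — and is a valid expansion of the paper's one-line argument.
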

 \begin{proof}
  By construction, it suffices to check the above statements in the case where $\Gsf = \Tsf$ is a torus. The first statement follows directly from its definition. The last two statements are Lemma~\ref{lem-Newton-pt-field-ext}.
 \end{proof}
   
 \begin{lem} \label{lem-Newton-pt-properties}
  Let $\bsf,\gsf \in \Gsf(\breve\Fsf)$. Then
  \begin{subenv}
   \item $\nu_\Gsf(\gsf\bsf\sigma(\gsf)^{-1}) = \Int(\gsf)\circ \nu_\Gsf(\bsf)$ and
   \item $\nu_\Gsf(\sigma(\bsf)) = \sigma(\nu_\Gsf(\bsf)) = \Int(\bsf\iv) \circ \nu_G(b)$.
  \end{subenv}
 \end{lem}
 \begin{proof}
   For the first assertion, we note that we have for any $d\in\NN$ that $\Nrm^{(d)}(\gsf\bsf\sigma^{-1}(\gsf)) = \gsf\Nrm^{(d)}(\bsf)\sigma^{-d}(\gsf)$. Choosing $d$ big enough that $\sigma^{(d)}(\gsf) = \gsf$, we get by Lemma~\ref{lem-Newton-pt-func} 
  \[
   \nu_\Gsf(\gsf\bsf\sigma^{-1}(\gsf)) =  \nu_{\Gsf_{\Fsf \otimes \FF_{q^d}}}(\gsf\Nrm^{(d)}(\bsf)\gsf^{-1}) = \Int(\gsf) \circ \nu_{\Gsf_{\Fsf \otimes \FF_{q^d}}}(\Nrm^{(d)}(\bsf)) = \Int(\gsf) \circ \nu_\Gsf(\bsf).
  \]
  
 The first equation in the second statement follows from applying Lemma~\ref{lem-Newton-pt-func}~(1) to $\sigma\colon \Gsf \to \Gsf$. By the first part of the lemma we also have
 \[
  \nu_\Gsf(\sigma(\bsf)) = \nu_\Gsf(\bsf\iv \bsf \sigma(\bsf)) = \Int(\bsf) \circ \nu_\Gsf(\bsf).
 \]
  
 \end{proof}
 
 \begin{lem} \label{lem-trivial-Newton-pt}
  Let $\bsf \in \Gsf(\breve\Fsf)$. Then $\nu_\Gsf(\bsf)$ is trivial if and only if $[\bsf]$ lies in the image of $\coh{1}(\breve{\Fsf}/\Fsf,\Gsf)$ under (\ref{H1-B}).
 \end{lem}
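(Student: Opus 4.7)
The \emph{if} direction is direct: if some $\bsf'\in[\bsf]$ satisfies $\Nrm^{(d)}(\bsf')=1$, then $\sigma^d(\bsf')=\bsf'$, so for any multiple $s$ of $d$ we have $\Nrm^{(s)}(\bsf')=\Nrm^{(d)}(\bsf')^{s/d}=1$, which is trivially semisimple. Hence $\nu_\Gsf(\bsf)=\nu_{\Gsf_{\Fsf\otimes\FF_{q^s}}}(1)/s=0$.

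For the \emph{only if} direction, I first establish the torus case. Let $\Tsf$ be an $\Fsf$-torus and $\tsf\in\Tsf(\breve\Fsf)$ with $\nu_\Tsf(\tsf)=0$; writing $\Fsf_d\coloneqq\Fsf\otimes_{\FF_q}\FF_{q^d}$ for brevity, I claim $[\tsf]\in\coh{1}(\breve\Fsf/\Fsf,\Tsf)$. By Proposition~\ref{prop-kottwitz-pt-for-tori} and the description of the kernel of (\ref{Kottwitz-to-Newton}), the hypothesis is equivalent to $\bar\kappa_\Tsf([\tsf])\in\A(\Fsf,\Tsf)^{\mathrm{tors}}$. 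Via $\bar\kappa_\Tsf$ and Lemma~\ref{lem-Kottwitz-pt-field-ext}(1), the subgroup $\coh{1}(\breve\Fsf/\Fsf,\Tsf)\subseteq\B(\Fsf,\Tsf)$ corresponds to $\bigcup_d\ker\bigl(\Nm_{\Fsf_d/\Fsf}\colon\A(\Fsf,\Tsf)\to\A(\Fsf_d,\Tsf)\bigr)$: a class lies in the image of $\coh{1}$ iff some $\sigma$-conjugate of a representative has trivial $\Nrm^{(d)}$, iff the class becomes trivial in $\B(\Fsf_d,\Tsf)$. The identification $\bigcup_d\ker\Nm_{\Fsf_d/\Fsf}=\A(\Fsf,\Tsf)^{\mathrm{tors}}$ then closes the torus case: ``$\subseteq$'' follows from $\cores_{\Fsf_d/\Fsf}\circ\Nm_{\Fsf_d/\Fsf}=d\cdot\id$, while ``$\supseteq$'' follows from the rational isomorphism $\A(\Fsf,\cdot)\otimes\QQ\isom\Hom_\Fsf(\DD_\Fsf,\cdot)\otimes\QQ$ together with a Tate-cohomology identification for the finite cyclic quotient $\Gal(\Fsf_{d_0}/\Fsf)$, where $d_0$ is the degree of the constant field of a Galois splitting field of $\Tsf$.

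For general reductive $\Gsf$, choose $s\in\NN$ large enough that $\gamma\coloneqq\Nrm^{(s)}(\bsf)$ is semisimple and $\bsf\in\Gsf(\Fsf_s)$. By Lemma~\ref{lem-Newton-pt-func}(2), $\nu_{\Gsf_{\Fsf_s}}(\gamma)=s\cdot\nu_\Gsf(\bsf)=0$, so $\gamma$ sits in some rational maximal torus $\Tsf\subseteq\Gsf_{\Fsf_s}$ with $\nu_\Tsf(\gamma)=0$ by Lemma~\ref{lem-Newton-pt-func}(1). Applying the torus case to $\Tsf$ over $\Fsf_s$ (with Frobenius $\sigma^s$), I find $\usf\in\Tsf(\breve\Fsf)$ and $r\in\NN$ with $\sigma^{sr}(\usf)=\usf$ such that $\gamma^\ast\coloneqq\usf^{-1}\gamma\sigma^s(\usf)$ satisfies $\prod_{j=0}^{r-1}\sigma^{js}(\gamma^\ast)=1$. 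Set $\bsf'\coloneqq\usf^{-1}\bsf\sigma(\usf)$; then $[\bsf']=[\bsf]$ in $\B(\Fsf,\Gsf)$, $\sigma^{sr}(\bsf')=\bsf'$, $\Nrm^{(s)}(\bsf')=\gamma^\ast$, and
\[
\Nrm^{(sr)}(\bsf')=\prod_{j=0}^{r-1}\sigma^{js}\bigl(\Nrm^{(s)}(\bsf')\bigr)=\prod_{j=0}^{r-1}\sigma^{js}(\gamma^\ast)=1,
\]
the last product being well-defined as its factors lie in the abelian subgroup $\Tsf\subseteq\Gsf_{\Fsf_s}$. Hence $[\bsf]\in\coh{1}(\breve\Fsf/\Fsf,\Gsf)$ via (\ref{H1-B}).

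The main obstacle is the torus-case identification $\bigcup_d\ker\Nm_{\Fsf_d/\Fsf}=\A(\Fsf,\Tsf)^{\mathrm{tors}}$, specifically showing that every torsion element is annihilated by some constant-field norm. All other steps are routine consequences of the functoriality of $\bar\kappa_\Tsf$ and $\nu$ together with the construction of the Newton point for rational semisimple elements.
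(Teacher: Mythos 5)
Your argument follows the same route as the paper's proof: the forward direction via $\nu_\Gsf(\bsf)=\tfrac1s\nu_\Gsf(\Nrm^{(s)}(\bsf))$, and the converse by placing $\Nrm^{(s)}(\bsf)$ in a rational torus, observing that its Kottwitz point is torsion, killing that torsion by a further constant-field extension, and finally $\sigma$-conjugating $\bsf$ by an element $\usf$ of the torus so that $\Nrm^{(sr)}(\usf^{-1}\bsf\sigma(\usf))=1$. The paper does all of this inline rather than isolating a self-contained torus case, but the steps and the closing computation are identical.

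The step you yourself flag as the crux --- that every torsion element of $\A(\Fsf,\Tsf)$ lies in $\ker\Nm_{\Fsf_d/\Fsf}$ for \emph{some} $d$ --- is the one place where your justification does not work as written. Tate cohomology of the single cyclic quotient $\Gal(\Fsf_{d_0}/\Fsf)$, with $d_0$ the constant-field degree of a splitting field, cannot suffice: take $\Tsf$ the norm-one torus of a \emph{geometric} separable quadratic extension $\Esf/\Fsf$, so that $d_0=1$ and $\Nm_{\Fsf_{d_0}/\Fsf}$ is the identity, while $\A(\Fsf,\Tsf)\cong\Div^\circ(\Esf)/f_{\Esf/\Fsf}^\ast\Div^\circ(\Fsf)$ contains the nonzero $2$-torsion class of $y-y'$ for any two inert places $y,y'$. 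That class does die, but only over $\Fsf_d$ for $d$ large enough that $y$ and $y'$ split completely in $\Esf_d/\Fsf_d$, and verifying this uses the multiplicities $d/\deg$ built into $\Nm_{\Fsf_d/\Fsf}$; in particular no $d$ chosen independently of the element can work, so the required statement is genuinely about the union over all $d$ and needs an argument (e.g.\ via the identification of $\A(\Fsf,\Tsf)^{\mathrm{tors}}$ with the image of $\coh{1}(\breve\Fsf/\Fsf,\Tsf)$, or an explicit computation with $\Div^\circ(\Esf_d)$). In fairness, the paper is equally terse here: it extracts the existence of $s'$ from Lemma~\ref{lem-Kottwitz-pt-field-ext}, which only records the compatibility of restriction with the norm maps and does not itself contain the torsion-vanishing claim. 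Everything else in your write-up is fine, up to two harmless imprecisions: $\Nrm^{(d)}(\bsf')=1$ does not by itself imply $\sigma^d(\bsf')=\bsf'$ (rather, both conditions hold simultaneously for a representative coming from $\coh{1}(\sigma^{\ZZ/d\ZZ},\Gsf(\Fsf\otimes\FF_{q^d}))$, which is what the forward direction actually uses), and the torus containing a rational semisimple element need not be taken maximal --- the paper's construction of $\nu$ only asks for a rational subtorus.
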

 \begin{proof}
  We have already shown in the discussion of the morphism (\ref{H1-B}) that $[\bsf]$ lies in the image of $\coh{1}(\breve\Fsf/\Fsf,\Gsf)$ if and only if $\Nrm^{(s)}(\bsf) = 1$ for $s>0$ divisible enough (or equivalently if this holds for any $\bsf' \in [\bsf]$). Thus if $[\bsf]$ lies in the image of $\coh{1}(\breve\Fsf/\Fsf,\Gsf)$ implies that
  \[
   \nu_{\Gsf}(\bsf) = \nu_{\Gsf}(\Nrm^{(s)}(\bsf)) = 0.
  \]
  On the other hand, assume that $\nu_\Gsf(\bsf)$ is trivial. We choose $s\in \NN$ and $\Tsf \subset \Gsf_{\Fsf \otimes \FF_{q^s}}$ such that $\Nrm^{(s)}(\bsf) \in \Tsf(\breve\Fsf)$. Since $\nu_\Tsf(\Nrm^{(s)}(\bsf)) = 0$,  $\bar\kappa_\Tsf(\Nrm^{(s)}(\bsf))$ must be torsion. Thus by Lemma~\ref{lem-Kottwitz-pt-field-ext}, there exists an $s' \in \NN$ such that $\bar\kappa_{\Tsf \otimes \FF_{q^{ss'}}}(\Nrm^ {(ss')}(\bsf)) = 1$ and hence $\Nrm^ {(ss')}(\bsf) = \gsf\iv\sigma^{ss'}(\gsf)$ for some $\gsf \in \Tsf(\breve\Fsf)$. Hence $\Nrm^{(ss')}(\gsf\bsf\sigma(\gsf\iv)) = 1$.
 \end{proof} 
 
 \subsection{} \label{non-reductive case}
 Even for elements in an arbitrary linear algebraic group, the construction above still yields a well-defined Newton point, though one potentially has to enlarge $s$ such that $\Nrm^{(s)}(\bsf)$ lies in the identity component. Moreover one checks that the above lemmata generalise to linear algebraic groups, using the same proofs.
 
 As application of this, let $\Psf$ be a connected linear algebraic group whose unipotent radical $R_u\Psf$ is defined over $\Fsf$ and $\Fsf$-split (e.g.\ when $\Psf$ is a parabolic subgroup of a reductive group). Then we have $\B(\Fsf,R_u\Psf) \cong \coh{1}(\Fsf,R_u\Psf) = 1$ by Lemma~\ref{lem-trivial-Newton-pt}; thus the canonical projection $\Psf \epi \Psf_{\rm red}$ onto the reductive quotient induces an isomorphism
 $\B(\Fsf,\Psf) \cong \B(\Fsf,\Psf_{\rm red})$ by the long exact sequence in (\ref{ss-exaxt-seq}).

 \subsection{} \label{ssect-def-of-invariants} By Corollary~\ref{cor-Kottwitz-pt-properties} and Lemma~\ref{lem-Newton-pt-properties}, the Kottwitz and Newton point define invariants for every $\bbf \in \B(\Fsf,\Gsf)$:
 \begin{align*}
  \bar\kappa_\Gsf(\bbf) &\in \A(\Fsf,\Gsf) \\
  \bar\nu_\Gsf(\bbf) &\in (\Hom_{\breve\Fsf}(\DD_F,\Gsf)/\Gsf(\breve\Fsf))^{\Gal(\breve\Fsf/\Fsf)},
 \end{align*}
 which we also call the Kottwitz point and Newton point, respectively. As in (3.6.1) we can define the norm map
 \[
  \A(\Fsf,\Gsf) \to (\Vals{\scl{\Fsf}} \otimes \pi_1(\Gsf))^{\Gal(\scl{\Fsf}/\Fsf)}.
 \]
  Then the image of $\bar\kappa_\Gsf(\bbf)$ under the Norm map coincides with the image of $\bar\nu_\Gsf(\bbf)$ under the canonical projection.
  \[
  (\Hom(\DD_\Fsf,\Gsf)/\Gsf(\scl{\Fsf}))^{\Gal(\scl{\Fsf}/\Fsf)} \epi (\Div(\scl{\Fsf}) \otimes \pi_1(\Gsf))^{\Gal(\scl{\Fsf}/\Fsf)}.
 \]
  Indeed, it suffices to check this result for tori, where it is true by definition. 

 \section{Basic $\sigma$-conjugacy classes} \label{sect-basic}

 As a next step, we would like to study to which extend a $\sigma$-conjugacy class is determined by its Kottwitz and Newton point. Our starting point is the following special case.
 
  \begin{defn}
  A $\sigma$-conjugacy class $\bbf \in \B(\Fsf,\Gsf)$ is called basic, if $\bar\nu_\Gsf(\bbf)$ is central. We denote by $\B(\Fsf,\Gsf)_b \subset \B(\Fsf,\Gsf)$ the subset of basic elements. An element $\bsf \in \Gsf(\breve\Fsf)$ is called basic, if $[\bsf]$ is basic.
 \end{defn}
  
  \subsection{}  \label{ss-J_b-basic}
  By Lemma~\ref{lem-trivial-Newton-pt} we get a Cartesian diagram
  \begin{center}
   \begin{tikzcd}
    \B(\Fsf,\Gsf)_b \ar{r} \ar{d} & \B(\Fsf,\Gsf) \ar{d} \\
    \coh{1}(\Fsf,\Gsf^{\rm ad}) \ar{r} & \B(\Fsf,\Gsf^{\ad}).
   \end{tikzcd}
  \end{center}
  Hence each $\bbf \in \B(\Fsf,\Gsf)_b$ corresponds to a set  of isomorphic inner forms $\{\Jsf_\bsf\}_{\bsf\in\bbf}$ of $\Gsf$, explicitly given by $\Jsf_\bsf(\breve\Fsf) = \Gsf(\breve\Fsf)$ where the $\Gal(\breve\Fsf/\Fsf)$-action is twisted by $\Ad(\bsf)$. More explicitly, we have for every $\Fsf$-algebra $\Rsf$
  \[
   \Jsf_\bsf(\Rsf) = \{ \gsf \in \Gsf(\Rsf \otimes_\Fsf \breve\Fsf) \mid \gsf\iv \bsf \sigma(\gsf) = \bsf \}.
  \]
  
   \begin{lem} \label{lem-inner-form}
 The map $\tau_\bsf\colon \Jsf_\bsf(\breve\Fsf) \mapsto \Gsf(\breve\Fsf),\gsf \mapsto \gsf \cdot \bsf$ induces a bijection $\bar\tau_\bsf\colon \B(\Fsf,\Jsf_b) \bij \B(\Fsf,\Gsf)$ such that for all $\bsf'\in \Jsf_\bsf(\breve\Fsf) = \Gsf(\breve\Fsf)$ we have 
  \[
   \nu_\Gsf(\tau_\bsf(\bsf')) = \nu_{\Jsf_\bsf}(\bsf') + \nu_{\Gsf}(\bsf)
  \]  
 \end{lem}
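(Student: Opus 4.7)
The plan is to verify bijectivity of $\bar\tau_\bsf$ and then derive the Newton-point identity from an explicit norm computation.

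For bijectivity, I would use $\sigma_{\Jsf_\bsf}(\hsf) = \bsf \sigma(\hsf) \bsf^{-1}$ to verify directly that $\tau_\bsf$ respects $\sigma$-conjugation: a substitution gives
\[
\tau_\bsf\bigl(\hsf \gsf \sigma_{\Jsf_\bsf}(\hsf)^{-1}\bigr) = \hsf \gsf \bsf \sigma(\hsf)^{-1} = \hsf \cdot \tau_\bsf(\gsf) \cdot \sigma(\hsf)^{-1},
\]
so $\tau_\bsf$ descends to a well-defined map $\bar\tau_\bsf$. The set-theoretic inverse $\gsf \mapsto \gsf \bsf^{-1}$ descends similarly by the analogous computation, yielding the asserted bijection.

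For the Newton identity, an induction on $n$ using $\sigma_{\Jsf_\bsf}^k(\gsf) = \Nrm^{(k)}(\bsf) \sigma^k(\gsf) \Nrm^{(k)}(\bsf)^{-1}$ gives
\[
\Nrm^{(n)}(\gsf \bsf) = \Nrm_{\Jsf_\bsf}^{(n)}(\gsf) \cdot \Nrm^{(n)}(\bsf).
\]
Since $\bsf$ is basic, $\Jsf_\bsf$ is a genuine reductive $\Fsf$-group; hence $\gsf$ is $\sigma_{\Jsf_\bsf}^{n_2}$-fixed for some $n_2$, and $\gsf, \bsf$ are $\sigma^{n_1}$-fixed for some $n_1$. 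I would take $n$ to be a common multiple of $n_1$ and $n_2$, large enough that $\bar{\bsf} := \Nrm^{(n)}(\bsf)$ and $\bar{\gsf} := \Nrm_{\Jsf_\bsf}^{(n)}(\gsf)$ are both semisimple (as in the lemma of section~\ref{sect-redgps}). Since $\sigma^n(\bsf) = \bsf$, the operator $\sigma^n$ commutes with $\sigma_{\Jsf_\bsf}$, so both $\sigma^n(\bar\gsf) = \bar\gsf$ and $\sigma_{\Jsf_\bsf}^n(\bar\gsf) = \bar\gsf$ hold; unfolding the latter as $\bar\bsf \sigma^n(\bar\gsf) \bar\bsf^{-1} = \bar\gsf$ gives the crucial commutation relation $\bar\bsf \bar\gsf = \bar\gsf \bar\bsf$.

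Then $\bar\bsf, \bar\gsf$ are commuting $\sigma^n$-rational semisimple elements of $\Gsf(\breve\Fsf)$, and a standard argument produces a common maximal torus $\Tsf \subset \Gsf_{\Fsf \otimes \FF_{q^n}}$ over $\Fsf \otimes \FF_{q^n}$ containing both. Since $\bar\bsf \in \Tsf(\breve\Fsf)$, $\Ad(\bar\bsf)|_\Tsf = \id$ and so $\sigma_{\Jsf_\bsf}^n|_\Tsf = \sigma^n|_\Tsf$: the two $(\Fsf \otimes \FF_{q^n})$-rational structures on $\Tsf$ coincide. By Proposition~\ref{prop-kottwitz-pt-for-tori} the Newton map is additive on $\Tsf$, and it gives the same cocharacter for $\bar\gsf$ whether $\Tsf$ is viewed in $\Gsf$ or in $\Jsf_\bsf$. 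Applying additivity to $\Nrm^{(n)}(\gsf\bsf) = \bar\gsf \cdot \bar\bsf$ and dividing by $n$ then delivers the stated identity; basic-ness of $\bsf$ further ensures that $\nu_{\Jsf_\bsf}(\gsf) + \nu_\Gsf(\bsf)$ is well-defined in $\Hom_{\breve\Fsf}(\DD_\Fsf, \Gsf)/\Gsf(\breve\Fsf)$. The main subtlety will be arranging the simultaneous rationality of $\gsf$ under both Frobenii, which fortunately forces the key commutation between $\bar\bsf$ and $\bar\gsf$ at no extra cost.
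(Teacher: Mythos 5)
Your verification of the bijection and of the norm identity $\Nrm^{(n)}(\gsf\bsf)=\Nrm^{(n)}_{\sigma_\bsf}(\gsf)\cdot\Nrm^{(n)}(\bsf)$ matches the paper, and your derivation that $\bar\bsf=\Nrm^{(n)}(\bsf)$ and $\bar\gsf=\Nrm^{(n)}_{\sigma_\bsf}(\gsf)$ commute is correct. The gap is the step ``a standard argument produces a common maximal torus containing both''. Two commuting semisimple elements of a connected reductive group need \emph{not} lie in a common torus: the standard counterexample is the Klein four-group generated by the images of $\diag(1,-1)$ and the antidiagonal involution in $\PGL_2$. The usual argument (place $\bar\gsf$ in a maximal torus of the centraliser of $\bar\bsf$ and note that $\bar\bsf$, being central in its centraliser, lies in every such torus) needs that centraliser to be connected, which holds when $\Gsf^{\der}$ is simply connected but not for the general reductive $\Gsf$ of the lemma. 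Even granting existence, you would still have to produce the common torus over $\Fsf\otimes\FF_{q^n}$ with both elements rational in it, a further nontrivial point over an imperfect field.

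The paper avoids all of this by extracting more from basicness than mere commutativity. Since $\bsf$ is basic, $\Jsf_\bsf$ is an inner form of $\Gsf$ defined by a continuous cocycle, so $\Ad(\Nrm^{(s)}(\bsf))=\id$ for $s$ divisible enough; that is, $\Nrm^{(s)}(\bsf)$ is \emph{central} in $\Gsf(\breve\Fsf)$ (equivalently, $\Jsf_\bsf(\Fsf\otimes\FF_{q^s})=\Gsf(\Fsf\otimes\FF_{q^s})$). With $\bar\bsf$ central, the additivity $\nu\bigl(\Nrm^{(s)}_{\sigma_\bsf}(\gsf)\cdot\Nrm^{(s)}(\bsf)\bigr)=\nu\bigl(\Nrm^{(s)}_{\sigma_\bsf}(\gsf)\bigr)+\nu\bigl(\Nrm^{(s)}(\bsf)\bigr)$ follows from functoriality of $\nu$ applied to the multiplication $\Gsf\times\Cent(\Gsf)\to\Gsf$, and no common torus is required. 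Your argument is repaired by upgrading ``$\bar\bsf$ commutes with $\bar\gsf$'' to ``$\bar\bsf$ is central'', which is available for exactly the reason you already invoke to make $\Jsf_\bsf$ a genuine $\Fsf$-group.
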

 \begin{proof}
  One easily checks that the bijection $\tau_b$ preserves and reflects $\sigma$-conjugacy. We fix an unramified field extension $\Fsf' = \Fsf \otimes \FF_{q^s}$ such that there exists an isomorphism $\Jsf_{\bsf,\Fsf'} \cong \Gsf_\Fsf'$, i.e.\ $\Nrm^{(s)}(\bsf)$ is central in $\Gsf$.
   We denote by $\sigma_{\bsf} \coloneqq \Int(\bsf) \circ \sigma$ and for any $\gsf \in \Gsf(\breve\Fsf)$
  \[
   \Nrm_{\sigma_\bsf}^{(s)}(\gsf) \coloneqq \gsf \cdot \sigma_{\bsf}(\gsf) \dotsm \sigma_\bsf^{s-1}(\gsf) = \Nrm^{(s)}(\gsf\cdot \bsf) \cdot \Nrm^{(s)}(\bsf)^{-1}. 
  \] 
  Now
  \begin{align*}
   \nu_{\Jsf_\bsf}(\gsf) &= \nu_{\Jsf_{\bsf,\Fsf'}}(\Nrm_{\sigma_b}^{(s)}(\gsf))
   = \nu_{\Gsf_{\Fsf'}}(\Nrm^{(s)}(\gsf \cdot \bsf) \cdot \Nrm^{(s)}(\bsf)^{-1})
   = \nu_{\Gsf_{\Fsf'}}(\Nrm^{(s)}(\gsf \cdot \bsf)) - \nu_{\Gsf_{\Fsf'}}(\Nrm^{(s)}(\bsf)) \\
   &= \nu_\Gsf(\gsf\bsf) - \nu_\Gsf(\bsf),
  \end{align*}  
  where the third equality follows by functoriality of $\nu$ applied to the multiplication \mbox{$\Gsf \times \Cent(\Gsf) \to \Gsf$} and $(\cdot)\iv\colon \Cent(\Gsf) \to \Cent(\Gsf)$.
 \end{proof}
 
 \begin{cor} \label{cor-same-Newton-pt}
  For any $\bsf \in \B(\Fsf,\Gsf)_b$ we have a natural bijection
  \[
   \coh{1}(\Fsf,\Jsf_\bsf) \to \{\bbf' \in \B(\Fsf,\Gsf) \mid \bar\nu_\Gsf(\bbf') = \bar\nu_\Gsf(\bbf) \}
  \]
 \end{cor}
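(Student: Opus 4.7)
The plan is to realize the desired map by restricting the bijection $\bar\tau_\bsf\colon \B(\Fsf,\Jsf_\bsf) \xrightarrow{\sim} \B(\Fsf,\Gsf)$ from Lemma~\ref{lem-inner-form} to the image of $\coh{1}(\Fsf,\Jsf_\bsf)$ in $\B(\Fsf,\Jsf_\bsf)$, and to identify this restriction as a bijection onto the fiber of $\bar\nu_\Gsf$ over $\bar\nu_\Gsf(\bbf)$. The two ingredients will be Lemma~\ref{lem-inner-form} (for transporting Newton points across $\bar\tau_\bsf$) and Lemma~\ref{lem-trivial-Newton-pt} (to characterise the vanishing locus of $\bar\nu_{\Jsf_\bsf}$).

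First I would exploit the basic hypothesis: since $\bsf$ is basic, the cocharacter $\nu_\Gsf(\bsf)\colon \DD_\Fsf \to \Gsf$ factors through $Z(\Gsf)=Z(\Jsf_\bsf)$ and is therefore fixed by $\Gsf(\breve\Fsf)$-conjugation, so $\bar\nu_\Gsf(\bbf)$ is represented by the single morphism $\nu_\Gsf(\bsf)$. Writing the formula of Lemma~\ref{lem-inner-form} multiplicatively via the identification $\Jsf_\bsf(\breve\Fsf)=\Gsf(\breve\Fsf)$ gives
\[
\nu_\Gsf(\tau_\bsf(\gsf)) \;=\; \nu_{\Jsf_\bsf}(\gsf)\cdot \nu_\Gsf(\bsf),
\]
which is well-defined because the right factor is central. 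Passing to $\Gsf(\breve\Fsf)$-conjugacy classes and using that multiplication by the central class $\nu_\Gsf(\bsf)$ is a bijection on the orbit set, one concludes that $\bar\nu_\Gsf([\tau_\bsf(\gsf)]) = \bar\nu_\Gsf(\bbf)$ if and only if $\bar\nu_{\Jsf_\bsf}([\gsf])$ is trivial.

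Next I would apply Lemma~\ref{lem-trivial-Newton-pt} to the reductive group $\Jsf_\bsf$: the triviality of $\bar\nu_{\Jsf_\bsf}([\gsf])$ is equivalent to $[\gsf]$ lying in the image of $\coh{1}(\Fsf,\Jsf_\bsf) \hookrightarrow \B(\Fsf,\Jsf_\bsf)$ from~\eqref{H1-B}. Composing this characterisation with $\bar\tau_\bsf$ produces the desired bijection, and its naturality in $\bbf$ (equivalently, in the inner form $\Jsf_\bsf$) is inherited directly from that of $\bar\tau_\bsf$.

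The only real bookkeeping step, and hence the mildly delicate point, is verifying that the formula of Lemma~\ref{lem-inner-form}, which a priori is an equality of specific Newton cocharacters, correctly descends to an equality of invariants $\bar\nu_\Gsf$; this hinges precisely on the centrality of $\nu_\Gsf(\bsf)$, which is exactly the content of the basic hypothesis. Once that is in place, the rest of the argument is a formal chain of equivalences assembled from Lemmas~\ref{lem-inner-form} and~\ref{lem-trivial-Newton-pt}.
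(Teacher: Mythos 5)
Your proposal is correct and takes essentially the same route as the paper: identify $\coh{1}(\Fsf,\Jsf_\bsf)$ with the trivial-Newton-point locus in $\B(\Fsf,\Jsf_\bsf)$ via Lemma~\ref{lem-trivial-Newton-pt}, then transport it by the bijection $\bar\tau_\bsf$ and the formula of Lemma~\ref{lem-inner-form}. Your explicit remark that the centrality of $\nu_\Gsf(\bsf)$ is what makes that formula descend to the invariant $\bar\nu_\Gsf$ is a detail the paper leaves implicit, but it is not a different argument.
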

 \begin{proof}
  By Lemma~\ref{lem-trivial-Newton-pt}, $\coh{1}(\Fsf,\Jsf_\bsf)$  can be identified with $\{\bbf' \in \B(\Fsf,\Jsf_\bsf) \mid \bar\nu_{\Jsf_\bsf}(\bbf') = 0 \}$.  By the previous lemma, there is a natural bijection 
  \[
   \{\bbf' \in \B(\Fsf,\Jsf_\bsf) \mid \bar\nu_{\Jsf_\bsf}(\bbf') = 0 \} \to \{\bbf' \in \B(\Fsf,\Gsf) \mid \bar\nu_\Gsf(\bbf') = \bar\nu_\Gsf(\bbf) \},
  \]
  finishing the proof.
 \end{proof}
 
 \begin{thm}
  The Kottwitz point  induces a bijection $\bar\kappa_\Gsf\colon \B(\Fsf,\Gsf)_b \bij \A(\Fsf,\Gsf)$. 
 \end{thm}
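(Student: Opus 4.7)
The plan is to reduce the assertion to the case of tori, which is already handled by Proposition~\ref{prop-kottwitz-pt-for-tori} (every class in $\B(\Fsf,\Tsf)$ is automatically basic since tori are commutative). A $z$-extension $1\to\Zsf\to\Gsf_1\to\Gsf\to 1$ with $\Zsf$ an induced central torus (so $\coh{1}(\breve\Fsf,\Zsf)=0$ by Tsen) and $\Gsf_1^\der$ simply connected yields compatible right-exact sequences
\[
 \B(\Fsf,\Zsf)\to\B(\Fsf,\Gsf_1)\to\B(\Fsf,\Gsf)\to 1,\qquad \A(\Fsf,\Zsf)\to\A(\Fsf,\Gsf_1)\to\A(\Fsf,\Gsf)\to 0,
\]
the first from section~2 and the second from the exactness of Borovoi's $\pi_1$. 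Since the preimage of the centre of $\Gsf$ in $\Gsf_1$ coincides with the centre of $\Gsf_1$, any lift of a basic class is again basic, so both sequences restrict compatibly to basic parts. Combined with the torus case for $\Zsf$, a short diagram chase reduces the theorem to $\Gsf_1$; hence we may assume $\Gsf^\der$ is simply connected.

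In this case the canonical projection $\Gsf\epi\Dsf\coloneqq\Gsf/\Gsf^\der$ induces an isomorphism $\pi_1(\Gsf)\isom\pi_1(\Dsf)=X_\ast(\Dsf)$, so $\A(\Fsf,\Gsf)=\A(\Fsf,\Dsf)=\B(\Fsf,\Dsf)$ by Proposition~\ref{prop-kottwitz-pt-for-tori}, and $\bar\kappa_\Gsf$ is identified with the natural map $\B(\Fsf,\Gsf)_b\to\B(\Fsf,\Dsf)$. For surjectivity, fix a maximal $\Fsf$-torus $\Tsf\subset\Gsf$. The surjection $X_\ast(\Tsf)\epi X_\ast(\Dsf)$ induces a surjection $\A(\Fsf,\Tsf)\epi\A(\Fsf,\Dsf)$, so any class in $\B(\Fsf,\Dsf)$ lifts to some $[\tsf]\in\B(\Fsf,\Tsf)$. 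Since $\Gsf^\der$ is simply connected, the intersection of the centre of $\Gsf$ with $\Gsf^\der$ is finite, yielding a direct-sum decomposition $X_\ast(\Tsf)_\QQ=X_\ast(Z(\Gsf)^\circ)_\QQ\oplus X_\ast(\Tsf\cap\Gsf^\der)_\QQ$. Decomposing $\nu_\Tsf(\tsf)=\nu_Z+\nu_\der$ accordingly, the surjectivity of the Newton map for the torus $\Tsf\cap\Gsf^\der$ provides $\ssf\in(\Tsf\cap\Gsf^\der)(\breve\Fsf)$ with $\nu(\ssf)=\nu_\der$. Then $\tsf\ssf^{-1}$ has the same image as $\tsf$ in $\Dsf(\breve\Fsf)$ and central Newton point $\nu_Z$, so its class in $\B(\Fsf,\Gsf)$ is basic and lifts the given class in $\B(\Fsf,\Dsf)$.

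For injectivity, let $\bbf,\bbf'\in\B(\Fsf,\Gsf)_b$ satisfy $\bar\kappa_\Gsf(\bbf)=\bar\kappa_\Gsf(\bbf')$. Choosing $\bsf\in\bbf$, Lemma~\ref{lem-inner-form} writes $\bbf'=\tau_\bsf([\gsf])$ for some $[\gsf]\in\B(\Fsf,\Jsf_\bsf)$; since both classes are basic with equal Kottwitz points, they have equal images in $\B(\Fsf,\Dsf)$, hence equal central Newton points in $\Gsf$, and Corollary~\ref{cor-same-Newton-pt} places $[\gsf]$ in $\coh{1}(\Fsf,\Jsf_\bsf)\subset\B(\Fsf,\Jsf_\bsf)$. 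Additivity of $\kappa_\Gsf$ together with the invariance of $\pi_1$ under inner twist gives $\bar\kappa_{\Jsf_\bsf}([\gsf])=0$. Now $\Jsf_\bsf^\der$ is simply connected, so Harder's theorem for function fields gives $\coh{1}(\Fsf,\Jsf_\bsf^\der)=1$; the resulting bijection $\coh{1}(\Fsf,\Jsf_\bsf)\isom\coh{1}(\Fsf,\Dsf)$ combined with the injectivity of $\bar\kappa_\Dsf$ on $\coh{1}(\Fsf,\Dsf)\hookrightarrow\B(\Fsf,\Dsf)$ forces $[\gsf]=1$, and thus $\bbf=\bbf'$.

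The main obstacle is the construction of a basic lift in the surjectivity argument: a naive lift from $\Dsf$ to $\Gsf$ is generally not basic, and the argument succeeds only because $\Gsf^\der$ being simply connected provides the direct-sum splitting of $X_\ast(\Tsf)_\QQ$ that allows the derived component of the Newton point to be killed using the surjectivity of the Newton map on tori. A secondary essential input is Harder's theorem $\coh{1}(\Fsf,\Gsf^{\mathrm{sc}})=1$ for function fields, without which the injectivity reduction to the torus case would break down.
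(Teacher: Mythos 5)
Your reduction via a $z$-extension and your injectivity argument follow the paper's proof essentially verbatim (same use of the isogeny $Z(\Gsf)\to\Dsf$, of Corollary~\ref{cor-same-Newton-pt}, and of Harder's vanishing of $\coh{1}(\Fsf,\Jsf_\bsf^{\der})$). The surjectivity argument, however, has a genuine gap, and it is exactly the point where the paper has to work hardest.

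The problem is the claim that ``the surjectivity of the Newton map for the torus $\Tsf\cap\Gsf^{\der}$ provides $\ssf\in(\Tsf\cap\Gsf^{\der})(\breve\Fsf)$ with $\nu(\ssf)=\nu_{\der}$.'' The Newton map of a torus $\Ssf$ is \emph{not} surjective onto $\Hom_{\Fsf}(\DD_\Fsf,\Ssf)_\QQ$: by construction it is the composite of $\bar\kappa_\Ssf$ with the norm map $\A(\Fsf,\Ssf)\to(\Div^\circ(\scl\Fsf)\otimes X_\ast(\Ssf))^{\Gal(\scl\Fsf/\Fsf)}$, so its image is only a lattice. Your $\nu_{\der}$ is obtained by projecting $\nu_\Tsf(\tsf)$ along the decomposition of $X_\ast(\Tsf)_\QQ$, which introduces denominators, so in general no such $\ssf$ exists. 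Concretely, take $\Gsf=\GL_2$, $\Tsf$ the diagonal torus, and $\bbf''=x-y\in\Div^\circ(\Fsf)=\B(\Fsf,\GG_m)$ with $x,y$ distinct places of degree $1$. A lift is $(D_1,D_2)$ with $D_1+D_2=x-y$, and $\nu_{\der}=\bigl(\tfrac{D_1-D_2}{2},\tfrac{D_2-D_1}{2}\bigr)$, which is not integral; indeed \emph{every} basic class in the image of $\B(\Fsf,\Tsf)\to\B(\Fsf,\GL_2)$ has determinant in $2\Div^\circ(\Fsf)$, so no fixed split maximal torus can produce a basic lift of $x-y$. This shows the failure is not a repairable technicality of your bookkeeping: a single maximal torus chosen independently of $\bbf''$ does not suffice.

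The paper circumvents this by making the torus depend on the target class: it localises, lets $S$ be the finite set of places where $\bbf''_x$ is nontrivial, invokes Bux--Wortman to find a maximal $\Fsf$-torus $\Tsf\subset\Gsf$ that is \emph{elliptic} at every $x\in S$, produces basic local lifts $\bbf_x\in\B(F_x,\Gsf)$ from Kottwitz's local theory, and glues their Kottwitz points into a single element of $\A(\Fsf,\Tsf)$ by the combinatorial argument of \cite[p.~80]{Kottwitz:GIsoc3}. Basicness of the resulting global class is then checked place by place (using that basic is equivalent to locally basic everywhere, and that ellipticity at $x\in S$ forces any class coming from $\Tsf$ to be locally basic there). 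To complete your proof you would need to import this local--global mechanism; the purely ``linear-algebraic'' splitting of $X_\ast(\Tsf)_\QQ$ cannot replace it.
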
 
 \begin{proof}
  The proof in \cite[Prop.~15.1]{Kottwitz:Gisoc3} still works in our setup. However, since the proof significantly simplifies in our situation, we give the full proof for  readers' convenience.  
  
 By construction an element $\bbf \in \B(\Fsf,\Gsf)$ is basic if and only if $\bar\nu(\bbf) \circ \iota_y$ is central for all places $y$ of $\scl\Fsf$. Since $\bar\nu(\bbf)$ is rational, we have for all $y$ and $\tau \in \Gal(\scl\Fsf/\Fsf)$ that 
 $
  \bar\nu(\bbf) \circ \iota_{\tau(y)} = \tau(\bar\nu(\bbf) \circ \iota_y).
 $
 Thus it is equivalent to check only that $\bar\nu(\bbf) \circ \iota_x$ is central for all $x \in |C|$. We conclude by Lemma~\ref{lem-Newton-pt-func}~(3) that $\bbf$ is basic if and only if its image $\bbf_x \in \B(F_x,\Gsf)$ is basic for every $x \in |C|$. 
  We first prove the theorem under the assumption that $\Gsf^\der$ is simply connected. Then $\A(\Fsf,\Gsf) \cong \A(\Fsf,\Dsf)$, where $\Dsf = \Gsf/\Gsf^{\rm der}$. Thus the statement of the theorem is equivalent to the canonical map $\B(\Fsf,\Gsf)_b \to \B(\Fsf,\Dsf)$ being bijective.
  
  To prove injectivity let $\bbf,\bbf'\in \B(\Fsf,\Gsf)_b$ with identical images in $\B(\Fsf,\Dsf)$. 
   We denote by $Z(\Gsf)$ the center of $\Gsf$. Since the map $Z(\Gsf) \to \Dsf$ is an isogeny, the induced morphism $\Hom(\DD_\Fsf,Z(\Gsf)) \to \Hom(\DD_\Fsf,\Dsf)$ is injective. Hence $\nu_\Gsf(\bsf) = \nu_\Gsf(\bsf')$. By Corollary~\ref{cor-same-Newton-pt}, we have that the difference between $\bbf$ and $\bbf'$ is measured by an element $\tau \in \coh{1}(\Fsf,\Jsf_\bbf)$ in a natural way. By assumption
  \[
   \tau \in \ker(\coh{1}(\Fsf,\Jsf_\bbf) \to \coh{1}(\Fsf,\Dsf)) = \coh{1}(\Fsf,\Jsf_\bsf^{\rm der}).
  \]
  By \cite{Harder:Galkoh3} this set is trivial, hence $\bbf = \bbf'$.
  
  To prove surjectivity, we fix $\bbf'' \in \B(\Fsf,\Dsf)$ and denote by $S \subset |C|$ the (finite) set of all places $x$ where $\bbf''_x$ is non-trivial. By \cite{BuxWortman:Finiteness} there exists a maximal $\Fsf$-torus $\Tsf \subset \Gsf$ such that $\Tsf$ is elliptic over $F_x$ for all $x \in S$. By Proposition~\ref{prop-kottwitz-pt-for-tori}, it suffices to construct an element $\lambda \in \Asf(\Fsf,\Tsf)$ which maps to $\bar\kappa_\Dsf(\bbf'')$. For this we fix a Galois extension $\Esf/\Fsf$ splitting $\Tsf$ and hence $\Dsf$. Since $\bbf''_x$ is trivial outside $S$, $\bar\kappa(\bbf'')$ is the coinvariant of an element of the form
  \[
   \sum_{y \in S_\Esf} \mu_y \otimes y \in X_\ast(\Dsf) \otimes \Vals{\Esf}.
  \] 
  By \cite{Kottwitz:Gisoc1}, there exists an element $\bbf_x \in \B(F_x,\Tsf)$ whose image in $\B(F_x,\Dsf)$ equals $\bbf''_x$. The Kottwitz point of $\bbf_x$ is an coinvariant of an element of the form
  \[
   \delta_x = \sum_{y|x} \mu'_y \otimes y \in \bigoplus_{y|x} \ZZ\cdot y.
  \]
  By the combinatorial argument given in \cite[p.~80]{Kottwitz:Gisoc3}, we may choose $\delta_x$ such that $\mu'_y$ maps to $\mu_y$ for all $y$ and such that $\delta \coloneqq \sum_{x\in S} \delta_x$ is an element of $X_\ast(\Tsf) \otimes \Vals{\Esf}$. In particular the coinvariant of $\delta$ in $\A(\Fsf,\Gsf)$ satisfies the wanted property.
  
  For general $\Gsf$, we choose a $z$-extension
  \begin{center}
   \begin{tikzcd}
    1 \ar{r} & \Zsf \ar{r} & \Gsf' \ar{r} & \Gsf \ar{r}& 1
   \end{tikzcd}
  \end{center}
  such that $\Gsf'^{\rm der}$ is simply connected. Hence we have a commutative diagram
  \begin{center}
   \begin{tikzcd}
     \B(\Fsf,\Zsf) \ar{r} \ar{d}{\sim}[swap]{\bar\kappa_\Zsf} 
     & \B(\Fsf,\Gsf')_b \ar{r} \ar{d}{\sim}[swap]{\bar\kappa_{\Gsf'}}
     & \B(\Fsf,\Gsf)_b \ar{d}[swap]{\bar\kappa_\Gsf} 
     \ar{r} & 1 \\
     \A(\Fsf,\Zsf) \ar{r} & \A(\Fsf,\Gsf') \ar{r} & \A(\Fsf,\Gsf) \ar{r} & 1
   \end{tikzcd}
  \end{center}
  with exact rows (where the top row is in the category of pointed sets and exact by the long exact sequence in (\ref{ss-exaxt-seq})) and whose two left vertical arrows are isomorphisms. Thus $\bar\kappa_\Gsf$ is also an isomorphism.
 \end{proof}

 As a consequence, we obtain the following statement about the Kottwitz point.

 \begin{prop} \label{prop ad-isomorphism}
  Let $\Hsf \to \Hsf'$ be an ad-isomorphism of reductive groups. Then the diagram
  \begin{center}
   \begin{tikzcd}
    \B(\Fsf,\Hsf) \ar{r}\ar{d} & \B(\Fsf,\Hsf') \ar{d} \\
    \A(\Fsf,\Hsf) \ar{r} & \A(\Fsf,\Hsf')
   \end{tikzcd}
  \end{center}
  is Cartesian.
 \end{prop}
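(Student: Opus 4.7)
The plan is to reduce the statement, for a general ad-isomorphism $f\colon \Hsf \to \Hsf'$, to the special case of the canonical projection $\Gsf \to \Gsf^{\ad}$ onto the adjoint quotient, and then to analyse this special case via the central extension $1 \to Z(\Gsf) \to \Gsf \to \Gsf^{\ad} \to 1$.

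For the reduction, I exploit that $f^{\ad}\colon \Hsf^{\ad} \xrightarrow{\sim} \Hsf'^{\ad}$ is an isomorphism by hypothesis. Applying $\B(\Fsf,-)$ and $\A(\Fsf,-)$ to the commutative square built from $f$ and $f^{\ad}$ produces a commutative cube of sets. Its ``back face'' (for the adjoint groups) has both horizontal arrows equal to isomorphisms, hence is trivially Cartesian. Granted the Cartesian property for the two ``side faces'', i.e.\ for the squares associated to $\Hsf \to \Hsf^{\ad}$ and $\Hsf' \to \Hsf'^{\ad}$, a routine pasting of fibre products---using that the two composite maps $\A(\Fsf,\Hsf) \to \A(\Fsf,\Hsf'^{\ad})$ arising from the cube coincide by functoriality---implies the desired Cartesian property for the ``front face'', which is precisely the square of the proposition.

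It remains to establish the Cartesian property in the special case $\Hsf' = \Gsf^{\ad}$. Setting $Z \coloneqq Z(\Gsf)$, the long exact $\sigma^\ZZ$-cohomology sequence (which terminates as $\coh{i}(\sigma^\ZZ,-) = 0$ for $i \geq 2$), together with exactness of $\pi_1(-)$ and right-exactness of the functor $(\Div^\circ(\Esf) \otimes -)_{\Gal(\Esf/\Fsf)}$, yield compatible exact sequences
\begin{align*}
 \B(\Fsf,Z) &\to \B(\Fsf,\Gsf) \to \B(\Fsf,\Gsf^{\ad}) \to 1, \\
 \A(\Fsf,Z) &\to \A(\Fsf,\Gsf) \to \A(\Fsf,\Gsf^{\ad}) \to 0,
\end{align*}
linked by the Kottwitz map. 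Since $Z$ is central, $\B(\Fsf,Z)$ acts on $\B(\Fsf,\Gsf)$ by multiplication, with orbits equal to the fibres of $\B(\Fsf,\Gsf) \to \B(\Fsf,\Gsf^{\ad})$, and analogously for $\A$. A diagram chase then reduces the Cartesian property to showing (i) surjectivity of $\bar\kappa_Z\colon \B(\Fsf,Z) \to \A(\Fsf,Z)$ (for existence of lifts) and (ii) that its kernel agrees with the stabilizer subgroup of the $\B(\Fsf,Z)$-action on $\B(\Fsf,\Gsf)$ (for uniqueness).

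The main obstacle is that $Z$ is only of multiplicative type and may be disconnected, so Proposition~\ref{prop-kottwitz-pt-for-tori} does not apply directly. Following the strategy used in the proof of the basic classification theorem established above, I handle this by choosing a $z$-extension $1 \to Z_1 \to \Gsf_1 \to \Gsf \to 1$ with $\Gsf_1^{\der}$ simply connected, which replaces $Z$ by the torus $Z(\Gsf_1)$, for which $\bar\kappa$ is bijective; independence from the choice of $z$-extension (via \cite[Lemma~2.4.4]{Kottwitz:StTrFormCuspTemp}) together with a snake-lemma chase on the resulting commutative diagram with exact rows then completes the argument.
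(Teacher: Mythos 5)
Your reduction of a general ad-isomorphism to the projection $\Gsf\to\Gsf^{\ad}$ (via the cube and the pasting lemma for fibre products) is fine, but your treatment of that special case contains a genuine gap. The sequence $\A(\Fsf,\Gsf)\to\A(\Fsf,\Gsf^{\ad})\to 0$ is simply false: the exactness of $\pi_1$ applies to short exact sequences of \emph{connected reductive} groups, not to $1\to Z\to\Gsf\to\Gsf^{\ad}\to 1$, and for $\Gsf=\SL_n$ one has $\pi_1(\SL_n)=0$ while $\pi_1(\PGL_n)=\ZZ/n$, so that $\A(\Fsf,\SL_n)=0$ maps to $\A(\Fsf,\PGL_n)\cong\Div^\circ(\Fsf)\otimes\ZZ/n\neq 0$. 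Likewise, the long exact $\sigma^\ZZ$-cohomology sequence you invoke presupposes that $1\to Z(\breve\Fsf)\to\Gsf(\breve\Fsf)\to\Gsf^{\ad}(\breve\Fsf)\to 1$ is exact, i.e.\ that $\coh{1}(\breve\Fsf,Z)$ vanishes — exactly the hypothesis the paper imposes in its Section 2. This fails for $Z$ of multiplicative type over the imperfect field $\breve\Fsf$: the cokernel of $\SL_p(\breve\Fsf)\to\PGL_p(\breve\Fsf)$ is $\breve\Fsf^\times/(\breve\Fsf^\times)^p\neq 1$, and an infinitesimal part of $Z$ is invisible in $Z(\breve\Fsf)$. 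For the same reason the orbits of $\B(\Fsf,Z)=Z(\breve\Fsf)_\sigma$ need not exhaust the fibres of $\B(\Fsf,\Gsf)\to\B(\Fsf,\Gsf^{\ad})$. Finally, the proposed repair by a $z$-extension does not close these holes: $Z(\Gsf_1)$ is only an extension of $Z(\Gsf)$ by the induced torus $Z_1$, hence again of multiplicative type and in general neither smooth nor connected (e.g.\ $\Gsf_1=\SL_p\times\GG_m\to\Gsf=\SL_p$ gives $Z(\Gsf_1)=\mu_p\times\GG_m$), so $\bar\kappa_{Z(\Gsf_1)}$ is not covered by the torus results, and the key surjectivity and exactness statements remain unproved.

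For comparison, the paper does not argue through the centre at all: it transports Kottwitz's proof of Proposition 4.10 of \emph{Isocrystals with additional structure II}, whose inputs are the bijection $\B(\Fsf,\Gsf)_b\cong\A(\Fsf,\Gsf)$, the behaviour of Newton points under ad-isomorphisms, and a reduction to the basic case via the centraliser $\Msf_\bsf$ of the Newton point — all of which are available from the earlier sections. If you want a self-contained proof, that is the route to take; a five-lemma chase on $1\to Z\to\Gsf\to\Gsf^{\ad}\to 1$ cannot work as stated in characteristic $p$.
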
 

 \begin{proof}
  The proof is the same as for local fields (\cite[Prop.~4.10]{Kottwitz:Gisoc2}) after one replaces $Z(H)^\Gamma$ by $\A(\Fsf,\Hsf)$. The arguments in the proof are formal cohomological constructions, which continue to hold, or statements about $\B(\Fsf,\Gsf)$, which we have proven above.
 \end{proof}

 \begin{cor} \label{cor-same-Kottwitz-pt}
  Let $\bar\kappa \in \A(\Fsf,\Gsf)$ and $\bsf \in \B(\Fsf,\Gsf)_b$ the corresponding element. Then the composition $ \B(\Fsf,\Jsf_b^{\rm sc}) \to \B(\Fsf,\Jsf_\bsf) \xrightarrow{\tau_\bsf} \B(\Fsf,\Gsf)$ induces an isomorphism $\B(\Fsf,\Jsf_\bsf^{\rm sc})\cong \B(\Fsf,\Gsf)_{\bar\kappa}$.
 \end{cor}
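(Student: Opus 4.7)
The plan is to assemble the statement by applying Proposition~\ref{prop ad-isomorphism} to the inclusion $\Jsf_\bsf^{\sc}\to\Jsf_\bsf$ and then transporting the resulting identification across the bijection $\tau_\bsf$ of Lemma~\ref{lem-inner-form}. Beyond these two inputs, the only real content is a translation-equivariance statement for $\bar\kappa$ under inner twisting.

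First, I would observe that $\Jsf_\bsf^{\sc}\to\Jsf_\bsf$ is an ad-isomorphism, since both groups share the adjoint quotient $\Jsf_\bsf^{\ad}=\Gsf^{\ad}$, and that $\A(\Fsf,\Jsf_\bsf^{\sc})=0$ because $\pi_1(\Jsf_\bsf^{\sc})=0$. Proposition~\ref{prop ad-isomorphism} then yields a Cartesian square whose left-hand column collapses to the one-point set, identifying
\[
 \B(\Fsf,\Jsf_\bsf^{\sc})\;\cong\;\bar\kappa_{\Jsf_\bsf}^{-1}(0)\subset\B(\Fsf,\Jsf_\bsf),
\]
i.e.\ the fiber of the Kottwitz map of $\Jsf_\bsf$ over the identity.

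Second, I would prove that $\tau_\bsf$ translates Kottwitz invariants: for every $\bsf'\in\Jsf_\bsf(\breve\Fsf)$,
\[
 \bar\kappa_\Gsf\bigl(\tau_\bsf([\bsf'])\bigr)\;=\;\bar\kappa_{\Jsf_\bsf}([\bsf'])+\kappa,
\]
under the canonical identification $\A(\Fsf,\Jsf_\bsf)=\A(\Fsf,\Gsf)$, which exists because $\pi_1$ together with its Galois action is insensitive to inner twisting. Both Kottwitz maps factor through the common abelianization $\Dsf=\Gsf/\Gsf^{\der}=\Jsf_\bsf/\Jsf_\bsf^{\der}$, and on $\Dsf$ the inner automorphism $\Int(\bsf)$ is trivial, so $\sigma_\bsf=\Int(\bsf)\circ\sigma$ and $\sigma$ descend to the same Frobenius. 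Since $\kappa_\Dsf\colon\Dsf(\breve\Fsf)\to\A(\breve\Fsf,\Dsf)$ is a group homomorphism and $\overline{\bsf'\cdot\bsf}=\overline{\bsf'}\cdot\overline{\bsf}$ in $\Dsf(\breve\Fsf)$, additivity of $\kappa_\Gsf$ on the product $\bsf'\cdot\bsf$ follows; passing to $\sigma$-coinvariants gives the displayed formula. Consequently $\tau_\bsf$ restricts to a bijection $\bar\kappa_{\Jsf_\bsf}^{-1}(0)\xrightarrow{1:1}\B(\Fsf,\Gsf)_{\bar\kappa}$. Composing with the first identification yields the claim.

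The main subtle point will be the compatibility in the second step: one has to spell out that the identification $\A(\Fsf,\Jsf_\bsf)=\A(\Fsf,\Gsf)$ really does intertwine the two Kottwitz maps up to the translation by $\kappa=\bar\kappa_\Gsf(\bsf)$. This is essentially formal once one reduces to the abelian quotient $\Dsf$, but it needs to be written out to keep the two Frobenii $\sigma$ and $\sigma_\bsf$ straight. All other steps are direct invocations of Lemma~\ref{lem-inner-form} and Proposition~\ref{prop ad-isomorphism}.
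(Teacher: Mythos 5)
Your proposal is correct and follows the same two-step route as the paper's proof: Proposition~\ref{prop ad-isomorphism} applied to the ad-isomorphism $\Jsf_\bsf^{\rm sc}\to\Jsf_\bsf$, together with $\A(\Fsf,\Jsf_\bsf^{\rm sc})=0$, identifies $\B(\Fsf,\Jsf_\bsf^{\rm sc})$ with the fibre of $\bar\kappa_{\Jsf_\bsf}$ over $0$, and $\tau_\bsf$ then carries that fibre onto $\B(\Fsf,\Gsf)_{\bar\kappa}$ (the paper leaves this last compatibility implicit, citing only the Newton-point version in Lemma~\ref{lem-inner-form}, so spelling it out as you do is welcome).

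One caveat on your second step: the Kottwitz map does \emph{not} in general factor through $\Dsf=\Gsf/\Gsf^{\der}$. For $\Gsf=\PGL_n$ the cocentre is trivial while $\pi_1(\Gsf)=\ZZ/n\ZZ$ and $\kappa_\Gsf$ is nontrivial; the factorisation through $\Dsf$ is special to the case where $\Gsf^{\der}$ is simply connected. However, the fact you actually need --- additivity $\kappa_\Gsf(\bsf'\bsf)=\kappa_\Gsf(\bsf')+\kappa_\Gsf(\bsf)$ --- is immediate, since $\kappa_\Gsf$ is by construction a group homomorphism into the abelian group $\A(\breve\Fsf,\Gsf)$. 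Combined with the identification $\A(\breve\Fsf,\Jsf_\bsf)=\A(\breve\Fsf,\Gsf)$ (inner twisting does not change $\pi_1$ with its Galois action, and $\Ad(\bsf)$ acts trivially on it, so the $\sigma$- and $\sigma_\bsf$-coinvariants agree), this yields your translation formula directly, and the rest of the argument goes through as written.
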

 \begin{proof}
  By Lemma~\ref{lem-inner-form} reduces us to show that $\B(\Fsf,\Jsf_\bsf^{\rm sc}) \to \B(\Fsf,\Jsf_\bsf)$ induces an isomorphism $\B(\Fsf,\Jsf_\bsf^{\rm sc})  \cong \B(\Fsf,\Jsf_\bsf)_0$. This holds by the previous proposition. 
 \end{proof}

 \section{Describing $\B(\Fsf,\Gsf)$ by its invariants} \label{sect-classification}
 
 \subsection{} We start with an application to the group of self-quasi-isogenies of an isoshuka; this construction will be helpful later. Given a $\Gsf$-isoshtuka $(\Vscr,\phi)$, we would like to study the group of self-quasi-isogenies $\Aut(\Vscr,\phi)$. For this we choose a trivialisation $\Vscr \cong \Gsf_{\breve\Fsf}$, which identifies $\phi$ with $\bsf\sigma$ for a $\bsf \in \Gsf(\breve\Fsf)$. We now obtain
 \[
  \Aut(\Vscr,\phi) \cong \{ \gsf \in \Gsf(\breve\Fsf) \mid \gsf\bsf = \bsf\sigma(\gsf) \} \eqqcolon \Jsf_\bsf(\Fsf).
 \] 
 \begin{prop}
  Let $\bsf \in \Gsf(\breve\Fsf)$ and $\Fsf'$ field of definition of $\nu_\Gsf(\bsf)$. Denote by $\Msf_\bsf$ the centraliser of $\nu_\Gsf(\bsf)$ in $\Gsf_{\Fsf'}$.
  \begin{subenv}
   \item $\Jsf_\bsf(\Fsf)$ is contained in $\Msf_\bsf(\breve\Fsf)$.
   \item The functor $\Jsf_\bsf\colon R \mapsto \{\gsf \in \Gsf(R\otimes_\Fsf \breve\Fsf) \mid \gsf\bsf = \bsf\sigma(\gsf)\}$ is representable by a reductive group over $\Fsf$. Moreover, $\Jsf_{\bsf,\Fsf'}$ is an inner $\breve\Fsf$-form of $\Msf_\bsf$. 
  \end{subenv}
 \end{prop}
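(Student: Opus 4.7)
The first assertion is immediate from Lemma~\ref{lem-Newton-pt-properties}(2): for $\gsf \in \Jsf_\bsf(\Fsf)$ the defining identity $\gsf\bsf\sigma(\gsf)^{-1} = \bsf$ gives
\[
 \nu_\Gsf(\bsf) \;=\; \nu_\Gsf(\gsf\bsf\sigma(\gsf)^{-1}) \;=\; \Int(\gsf)\circ\nu_\Gsf(\bsf),
\]
which forces $\gsf$ to centralise $\nu_\Gsf(\bsf)$ and hence lie in $\Msf_\bsf(\breve\Fsf)$. The same computation, applied pointwise over any $\Fsf$-algebra $R$, shows that $\Jsf_\bsf$ is contained in (the appropriate Weil-restriction interpretation of) $\Msf_\bsf$ as a subfunctor of $\Gsf$.

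For (2), my plan is to adapt Kottwitz's argument from the $p$-adic setting (cf.\ \cite[\S3]{Kottwitz:GIsoc2}). The key reduction -- which I expect to be the main obstacle -- is to replace $\bsf$ by a $\sigma$-conjugate lying in $\Msf_\bsf(\breve\Fsf)$. Such a replacement is harmless: conjugation by the implementing element gives an isomorphism between the corresponding $\Jsf$-functors, so it preserves all claims. To achieve the reduction, I use that $\Msf_\bsf \subset \Gsf_{\Fsf'}$ is a Levi subgroup (being the centraliser of a cocharacter). Choosing a parabolic $\Psf \subset \Gsf_{\Fsf'}$ with Levi $\Msf_\bsf$ for which $\nu_\Gsf(\bsf)$ is dominant, a slope-decomposition / Hodge--Newton argument first $\sigma$-conjugates $\bsf$ into $\Psf(\breve\Fsf)$. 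The triviality of $\B(\Fsf',\Nsf)$ (cf.\ \S\ref{non-reductive case}, applied to the $\Fsf'$-split unipotent radical $\Nsf$ of $\Psf$) then permits a further $\sigma$-conjugation by an element of $\Nsf(\breve\Fsf)$ to move $\bsf$ into the Levi $\Msf_\bsf(\breve\Fsf)$.

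Assume from now on that $\bsf \in \Msf_\bsf(\breve\Fsf)$. Functoriality of the Newton point yields $\nu_{\Msf_\bsf}(\bsf) = \nu_\Gsf(\bsf)$, which is central in $\Msf_\bsf$ by the very definition of the latter; thus the class of $\bsf$ is basic in $\B(\Fsf',\Msf_\bsf)$. Applied to $\Msf_\bsf$ over $\Fsf'$, the construction of \S\ref{ss-J_b-basic} produces an inner $\breve\Fsf$-form $\Jsf$ of $\Msf_\bsf$ over $\Fsf'$, with $\Jsf(\Fsf') = \{\gsf \in \Msf_\bsf(\breve\Fsf) \mid \gsf\bsf = \bsf\sigma(\gsf)\}$. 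Combined with part~(1), which places $\Jsf_{\bsf,\Fsf'}$ inside $\Msf_\bsf$, this identifies $\Jsf$ with $\Jsf_{\bsf,\Fsf'}$ as functors on $\Fsf'$-algebras; in particular $\Jsf_{\bsf,\Fsf'}$ is representable by the reductive group $\Jsf$ and is an inner $\breve\Fsf$-form of $\Msf_\bsf$. Finally, $\Jsf_\bsf$ is by construction an fpqc sheaf on $\Fsf$-algebras (the defining equation $\gsf\bsf = \bsf\sigma(\gsf)$ descends), so the representability over $\Fsf'$ together with effectiveness of Galois descent along the finite Galois extension $\Fsf'/\Fsf$ yields that $\Jsf_\bsf$ is itself representable by a reductive $\Fsf$-group.
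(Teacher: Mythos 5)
Your part~(1) is exactly the paper's argument and is fine. For part~(2), the endgame (once $\bsf$ is a basic element of $\Msf_\bsf(\breve\Fsf)$, invoke \ref{ss-J_b-basic} and then descend along $\Fsf'/\Fsf$ via the twisted descent datum) agrees with the paper, but the step you yourself flag as ``the main obstacle'' is a genuine gap. First, $\sigma$-conjugation is not harmless for this particular statement: replacing $\bsf$ by $\gsf\bsf\sigma(\gsf)^{-1}$ replaces $\nu_\Gsf(\bsf)$ by $\Int(\gsf)\circ\nu_\Gsf(\bsf)$ and hence $\Msf_\bsf$ by $\Int(\gsf)(\Msf_\bsf)$, so ``a $\sigma$-conjugate of $\bsf$ lying in $\Msf_\bsf(\breve\Fsf)$'' is not a well-posed target without extra care. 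Second, the ``slope-decomposition / Hodge--Newton argument'' that is supposed to move $\bsf$ into $\Psf(\breve\Fsf)$ is neither proved nor available anywhere in the paper; in Kottwitz's treatment such statements are consequences of, not inputs to, the analysis you are carrying out, so as written this is circular. Third, ``the class of $\bsf$ is basic in $\B(\Fsf',\Msf_\bsf)$'' does not parse: $\B(\Fsf',\Msf_\bsf)$ is formed with the Frobenius $\sigma_{\Fsf'}=\sigma^{[\Fsf':\Fsf]}$ acting on $\Msf_\bsf(\breve\Fsf')$, and the element attached to $\bsf$ under this change of base field is $\Nrm^{([\Fsf':\Fsf])}(\bsf)$, not $\bsf$ itself.

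All three problems disappear along the paper's (simpler) route, which uses no $\sigma$-conjugation at all. One first identifies, for any $\Fsf'$-algebra $R$ and $d=[\Fsf':\Fsf]$, the set $\Jsf_\bsf(R)$ with $\{\gsf\in\Gsf(R\otimes_{\Fsf'}\breve\Fsf')\mid \gsf\,\Nrm^{(d)}(\bsf)=\Nrm^{(d)}(\bsf)\,\sigma_{\Fsf'}(\gsf)\}$, using $\Fsf'\otimes_\Fsf\breve\Fsf\cong\breve\Fsf'^{[\Fsf':\Fsf]}$; this replaces $(\Fsf,\bsf)$ by $(\Fsf',\Nrm^{(d)}(\bsf))$, whose Newton point $d\cdot\nu_\Gsf(\bsf)$ is rational and has the same centraliser $\Msf_\bsf$. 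Once $\nu_\Gsf(\bsf)$ is rational, $\bsf$ \emph{automatically} lies in $\Msf_\bsf(\breve\Fsf)$: Lemma~\ref{lem-Newton-pt-properties} gives $\Int(\bsf^{-1})\circ\nu_\Gsf(\bsf)=\nu_\Gsf(\sigma(\bsf))=\sigma(\nu_\Gsf(\bsf))=\nu_\Gsf(\bsf)$, so $\bsf$ centralises its own Newton point. From there your appeal to \ref{ss-J_b-basic} and to Galois descent is correct. If you insist on your own reduction you would have to prove the Hodge--Newton-type conjugation statement from scratch, which is a substantial and unnecessary detour.
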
 
 \begin{proof}
  The first assertion holds as
  \[
   \Int(\gsf) \circ \nu_\Gsf(\bsf) = \nu_\Gsf(\gsf \bsf \sigma(\gsf\iv)) = \nu_\Gsf(\bsf).
  \]
  We prove the second part of (2) first. Identifying $\Fsf' \otimes \breve\Fsf \cong \breve\Fsf'^{[\Fsf':\Fsf]}$, we obtain for any $\Fsf'$-algebra $\Rsf$
  \begin{align*}
   \Jsf_\bsf(\Rsf) &= \{(\gsf_i) \in \Gsf(\Rsf \otimes_{\Fsf'} \breve\Fsf')^{[\Fsf':\Fsf]} \mid \gsf_i \bsf = \bsf \sigma(\gsf_{i-1}) \} \\
   &\cong \{\gsf_1  \in \Gsf(\Rsf \otimes_{\Fsf'} \breve\Fsf') \mid \gsf_1 \Nrm^{([\Fsf':\Fsf])}(\bsf) = \Nrm^{([\Fsf':\Fsf])}(\bsf) \sigma_{\Fsf'}(\gsf_1)\}.
  \end{align*}
  Thus $\Jsf_{\bsf,\Fsf'}$ is isomorphic to $\Jsf_{\Nrm^{([\Fsf':\Fsf])}(\bsf)}$ for the base field $\Fsf'$. Replacing $\bsf$ by $\Nrm^{([\Fsf':\Fsf])}(\bsf)$ and $\Fsf$ by $\Fsf'$, we thus reduce the second part of (2) to the case that $\nu_\Gsf(\bsf)$ is rational. Since
  \[
   \Int(\bsf) \circ \nu_\Gsf(\bsf) = \nu_\Gsf(\sigma(\bsf)) = \sigma(\nu_\Gsf(\bsf)) = \nu_\Gsf(\bsf),
  \]
  we have that $\bsf \in \Msf(\breve\Fsf)$. Since $\bsf$ is a basic element of $\Msf(\breve\Fsf)$ by construction, the claim follows from (\ref{ss-J_b-basic}). The first part of (2) now follows from the second as $\Int(\bsf) \circ \sigma$ defines a Galois descent datum on $\Jsf_{\bsf,\Fsf'}$.
 \end{proof}  
 Note that by the proof of the previous proposition, we may identify $\Jsf_{\bsf,\breve\Fsf} \cong \Msf_{\bsf,\breve\Fsf}$ where the Frobenius acts by $\Ad(\bsf) \circ \sigma$. We obtain the following intermediate results in our pursuit to classify $\Bsf(\Fsf,\Gsf)$.
 
 \begin{lem}
  Let $\bsf \in \Gsf(\breve\Fsf)$ and denote by $\nu$ its Newton point. The map $\tau_\bsf\colon \Jsf_b(\breve\Fsf) \to \Gsf(\breve\Fsf),\gsf \mapsto \gsf\cdot \bsf$ induces a bijection 
  \[  
   \coh{1}(\Fsf,\Jsf_\bsf) \bij \{ \bbf' \in \B(\Fsf,\Gsf) \mid \bar\nu_\Gsf(\bbf') = \bar\nu \}.
  \]
 \end{lem}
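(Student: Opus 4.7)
The plan is to reduce to the basic case (Corollary~\ref{cor-same-Newton-pt}) by descending to the Levi subgroup $\Msf_\bsf$, in which $\bsf$ is automatically basic. A direct computation shows that $\tau_\bsf\colon \Jsf_\bsf(\breve\Fsf) \to \Gsf(\breve\Fsf)$ is a bijection of sets (inverse $\gsf' \mapsto \gsf'\bsf^{-1}$) intertwining $\sigma_\bsf$-conjugation by $\hsf \in \Jsf_\bsf(\breve\Fsf)$ with $\sigma$-conjugation by the same $\hsf$ in $\Gsf(\breve\Fsf)$, via $\tau_\bsf(\hsf \gsf \sigma_\bsf(\hsf)^{-1}) = \hsf \cdot \tau_\bsf(\gsf) \cdot \sigma(\hsf)^{-1}$; it therefore descends to a map $\bar\tau_\bsf \colon \B(\Fsf, \Jsf_\bsf) \to \B(\Fsf, \Gsf)$, and the content of the lemma is to identify the preimage of $\{\bbf' : \bar\nu_\Gsf(\bbf') = \bar\nu\}$ with $\coh{1}(\Fsf, \Jsf_\bsf)$.

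The crucial input is the identity
\[
 \nu_\Gsf(\gsf \cdot \bsf) \;=\; \nu \;+\; \nu_{\Jsf_\bsf}(\gsf) \qquad (\gsf \in \Jsf_\bsf(\breve\Fsf) = \Msf_\bsf(\breve\Fsf))
\]
of cocharacters into $\Msf_{\bsf,\breve\Fsf} \subset \Gsf_{\breve\Fsf}$. Using the identification $\Jsf_{\bsf,\breve\Fsf} = \Msf_{\bsf,\breve\Fsf}$ with Frobenius $\sigma_\bsf = \Ad(\bsf)\circ\sigma$ from the previous proposition, $\bsf$ becomes basic in $\Msf_\bsf$ (its Newton point $\nu$ is central in its own centraliser). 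I would derive the identity by applying Lemma~\ref{lem-inner-form} to $\Msf_\bsf$ in place of $\Gsf$ and composing with the functoriality of the Newton point (Lemma~\ref{lem-Newton-pt-func}) along $\Msf_\bsf \hookrightarrow \Gsf$.

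Granting this, both halves are short. For surjectivity, given $\bbf'$ with $\bar\nu_\Gsf(\bbf') = \bar\nu$, Lemma~\ref{lem-Newton-pt-properties}(2) lets me choose $\bsf' \in \bbf'$ with $\nu_\Gsf(\bsf') = \nu$ as a specific cocharacter; combining Lemma~\ref{lem-Newton-pt-properties}(1) and (2) with the $\sigma$-conjugacy between $\bsf'$ and $\sigma(\bsf')$ (realised by $\bsf'^{-1}$) yields $\bsf' \sigma(\nu) \bsf'^{-1} = \nu$, placing $\bsf'$ in the transporter $\{g : g \sigma(\nu) g^{-1} = \nu\}$, which coincides with the coset $\Msf_\bsf(\breve\Fsf) \cdot \bsf$ since $\bsf$ itself lies in the transporter. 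Hence $\gsf := \bsf'\bsf^{-1} \in \Jsf_\bsf(\breve\Fsf)$, and the key identity forces $\nu_{\Jsf_\bsf}(\gsf) = 0$, so $[\gsf] \in \coh{1}(\Fsf, \Jsf_\bsf)$ by Lemma~\ref{lem-trivial-Newton-pt} and maps to $\bbf'$. For injectivity, if $\gsf' = \hsf \gsf \sigma_\bsf(\hsf)^{-1}$ for some $\hsf \in \Gsf(\breve\Fsf)$ and both $\gsf, \gsf'$ represent classes in $\coh{1}(\Fsf, \Jsf_\bsf)$, the identity gives $\nu_\Gsf(\gsf \bsf) = \nu_\Gsf(\gsf' \bsf) = \nu$, whence Lemma~\ref{lem-Newton-pt-properties}(2) forces $\Ad(\hsf) \circ \nu = \nu$, placing $\hsf$ in $\Msf_\bsf(\breve\Fsf) = \Jsf_\bsf(\breve\Fsf)$ and proving the two classes agree.

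The main subtlety is the key identity. Lemma~\ref{lem-inner-form} was proved for $\bsf$ basic in the ambient group defined over $\Fsf$, whereas here $\bsf$ is basic only in the Levi $\Msf_\bsf$, which in general descends only to the field of definition of $\nu$. The underlying $\sigma$-norm computation $\Nrm_\sigma^{(s)}(\gsf \bsf) = \Nrm_{\sigma_\bsf}^{(s)}(\gsf) \cdot \Nrm_\sigma^{(s)}(\bsf)$ and the functoriality of the Newton point are robust enough to carry over, but one must verify that for $s$ sufficiently divisible both $\Nrm_\sigma^{(s)}(\bsf)$ and $\Nrm_{\sigma_\bsf}^{(s)}(\gsf)$ land in $\Msf_\bsf(\breve\Fsf)$ with the former central there, so that functoriality of $\nu$ along the multiplication $\Msf_\bsf \times Z(\Msf_\bsf) \to \Msf_\bsf$ yields the stated decomposition.
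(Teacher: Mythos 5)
Your proposal is correct and follows essentially the same route as the paper: the paper also chooses $\bsf'\in\bbf'$ with $\nu_\Gsf(\bsf')=\nu$, uses Lemma~\ref{lem-Newton-pt-properties}~(2) with $\gsf=\bsf^{-1},\bsf'^{-1}$ to place $\bsf'\bsf^{-1}$ in $\Msf_\bsf(\breve\Fsf)=\Jsf_\bsf(\breve\Fsf)$, and invokes the norm identity ``as in the proof of Lemma~\ref{lem-inner-form}'' to get $\nu_{\Jsf_\bsf}(\bsf'\bsf^{-1})=\nu_\Gsf(\bsf')-\nu_\Gsf(\bsf)=0$. You merely spell out the well-definedness and injectivity steps that the paper dismisses as ``easily checked'', and correctly flag the (real but harmless) point that Lemma~\ref{lem-inner-form} must be transposed from the basic case to the Levi $\Msf_\bsf$ over the field of definition of $\nu$.
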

 \begin{proof}
  By the proof of Lemma~\ref{lem-inner-form} the map $\tau_\bsf$ preserves $\sigma$-conjugacy and we get for any $\jsf \in \Jsf_\bsf(\breve\Fsf)$ that $\nu_\Gsf(\tau_\bsf(\jsf)) = \nu_{\Jsf_\bsf}(\jsf) + \nu_\Gsf(\bsf)$. Hence the above map on $\sigma$-conjugacy classes is well-defined. 
  
  To prove injectivity, let $\jsf,\jsf' \in \Jsf_\bsf(\breve\Fsf)$ such that $\nu_{\Jsf_\bsf}(\jsf) = \nu_{\Jsf_\bsf}(\jsf') = 0$ and $\gsf \in \Gsf(\breve\Fsf)\iv$ such that $\tau_\bsf(\jsf') = \gsf \tau_\bsf(\jsf) \sigma(\gsf)$. By Lemma~\ref{lem-Newton-pt-func} $\gsf$ centralises $\nu_{\Gsf}(\tau_\bsf(\jsf)) =  \nu_{\Gsf}(\tau_\bsf(\jsf')) = \nu$. Hence $\gsf \in \Msf_\bsf(\breve\Fsf) = \Jsf_\bsf(\breve\Fsf)$ and one concludes $\jsf' = \gsf \jsf \bsf \sigma(\gsf)\iv \bsf\iv= \gsf \jsf \sigma_\bsf(\gsf)\iv$.
  
 For surjectivity let $\bbf' \in \B(\Fsf,\Gsf)$ with $\bar\nu_\Gsf(\bbf) = \bar\nu$ and choose $\bsf' \in \bbf'$ with $  \nu_\Gsf(\bsf')= \nu$. Substituting $\gsf = \bsf^{-1},\bsf'^{-1}$in Lemma~\ref{lem-Newton-pt-properties}~(2) we obtain $\sigma(\nu) = \Int(\bsf^{-1}) \circ \nu = \Int(\bsf'^{-1}) \circ \nu$. In particular, we have $\jsf \coloneqq \bsf' \bsf^{-1} \in \Msf_\bsf(\breve\Fsf) = \Jsf_\bsf(\breve\Fsf)$. In particular,  $   \nu_{\Jsf_\bsf}(\jsf) = \nu_\Gsf(\bsf') - \nu_{\Gsf}(\bsf)
  $
  is trivial, proving surjectivity.
 \end{proof}
 
 \begin{rmk}
  One checks that analogously to the result of Rapoport and Richartz \cite[Prop.~1.17]{RapoportRichartz:Gisoc}, the $\Jsf_\bsf$-torsor corresponding to $\bsf'$ as above is given by
  \[
   \Jsf_{\bsf,\bsf'}\colon R \mapsto \{ \gsf \in \Gsf(R \otimes \Fsf) \mid \gsf \bsf \sigma(\gsf)^{-1} = \bsf' \}.
  \]
 \end{rmk}
 
  \begin{cor}
 Let $\bbf,\bbf' \in \B(\Fsf,\Gsf)$ such that $\bar\nu_\Gsf(\bbf) = \bar\nu_\Gsf(\bbf')$ and $\bar\kappa_\Gsf(\bbf) = \bar\kappa_\Gsf(\bbf')$. Then $\bbf = \bbf'$.
 \end{cor}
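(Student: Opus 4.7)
The plan is to use the preceding Lemma to convert the hypothesis into a question about $\coh{1}(\Fsf,\Jsf_\bsf)$, and then to apply the basic-case classification theorem (the bijection $\bar\kappa_\Gsf\colon \B(\Fsf,\Gsf)_b \bij \A(\Fsf,\Gsf)$) to the reductive $\Fsf$-group $\Jsf_\bsf$.

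First I would fix a representative $\bsf \in \bbf$. Since $\bar\nu_\Gsf(\bbf) = \bar\nu_\Gsf(\bbf')$, the preceding Lemma produces a unique $\tau \in \coh{1}(\Fsf,\Jsf_\bsf)$ with $\bbf' = [\gsf \cdot \bsf]$ for any representative $\gsf \in \Jsf_\bsf(\breve\Fsf) \subset \Gsf(\breve\Fsf)$ of $\tau$, so it suffices to show $\tau = 0$. Because $\tau$ has trivial Newton point in $\Jsf_\bsf$ it is basic in $\B(\Fsf,\Jsf_\bsf)$, so by the basic-case theorem applied to $\Jsf_\bsf$ it is determined by its invariant $\bar\kappa_{\Jsf_\bsf}(\tau) \in \A(\Fsf,\Jsf_\bsf)$; hence it is enough to prove $\bar\kappa_{\Jsf_\bsf}(\tau) = 0$.

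Next I would translate the Kottwitz hypothesis. Using that $\kappa_\Gsf\colon \Gsf(\breve\Fsf) \to \A(\breve\Fsf,\Gsf)$ is a group homomorphism and that $\gsf \in \Jsf_\bsf(\breve\Fsf) = \Msf_\bsf(\breve\Fsf) \subset \Gsf(\breve\Fsf)$, functoriality of $\kappa$ yields
\[
 \kappa_\Gsf(\gsf \bsf) - \kappa_\Gsf(\bsf) \;=\; \kappa_\Gsf(\gsf) \;=\; \iota_\ast\bigl(\kappa_{\Jsf_\bsf}(\gsf)\bigr),
\]
where $\iota_\ast\colon \A(\breve\Fsf,\Jsf_\bsf) \to \A(\breve\Fsf,\Gsf)$ is induced by the Galois-equivariant surjection $\pi_1(\Jsf_\bsf) = \pi_1(\Msf_\bsf) \twoheadrightarrow \pi_1(\Gsf)$ coming from the Levi inclusion. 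Passing to $\sigma$-coinvariants and invoking the hypothesis $\bar\kappa_\Gsf(\bbf) = \bar\kappa_\Gsf(\bbf')$ gives $\iota_\ast\bigl(\bar\kappa_{\Jsf_\bsf}(\tau)\bigr) = 0$ in $\A(\Fsf,\Gsf)$.

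The main obstacle is then to upgrade this vanishing to $\bar\kappa_{\Jsf_\bsf}(\tau) = 0$, i.e.\ to show that $\iota_\ast$ is injective on the subgroup of $\A(\Fsf,\Jsf_\bsf)$ coming from $\coh{1}(\Fsf,\Jsf_\bsf)$. Since the kernel of $\pi_1(\Msf_\bsf) \to \pi_1(\Gsf)$ is the Galois submodule $Q^\vee_\Gsf/Q^\vee_{\Msf_\bsf}$ generated by the coroots of $\Gsf$ not in $\Msf_\bsf$, the map $\iota_\ast$ fails to be injective in general; however $\bar\kappa_{\Jsf_\bsf}(\tau)$ is torsion, since its image under the Norm map is the trivial Newton point of $\tau$. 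I would attack the remaining claim by tensoring the short exact sequence $0 \to Q^\vee_\Gsf/Q^\vee_{\Msf_\bsf} \to \pi_1(\Msf_\bsf) \to \pi_1(\Gsf) \to 0$ with $\Div^\circ(\Esf)$, taking $\Gal(\Esf/\Fsf)$-coinvariants and passing to the inverse limit; the torsion-freeness of $\Div^\circ(\Esf)$ together with the freeness of $Q^\vee_\Gsf/Q^\vee_{\Msf_\bsf}$ should force the torsion of $\ker \iota_\ast$ to vanish. As an alternative, one could carry out a $z$-extension of $\Jsf_\bsf$ as in the proof of the basic-case theorem, reducing the question to the torus case where the Kottwitz map is an isomorphism by construction, and then read off the injectivity from the explicit formulas of Section~\ref{sect-tori}.
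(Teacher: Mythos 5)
Your opening reduction agrees with the paper's: both arguments use the bijection $\coh{1}(\Fsf,\Jsf_\bsf) \bij \{\bbf'' \in \B(\Fsf,\Gsf) \mid \bar\nu_\Gsf(\bbf'') = \bar\nu_\Gsf(\bbf)\}$ to replace the pair $(\bbf,\bbf')$ by a single class $\tau \in \coh{1}(\Fsf,\Jsf_\bsf)$ that must be shown to be trivial, and your translation of the Kottwitz hypothesis into $\iota_\ast(\bar\kappa_{\Jsf_\bsf}(\tau)) = 0$ is fine. The genuine gap is the step you yourself flag as the main obstacle: you need $\iota_\ast\colon \A(\Fsf,\Jsf_\bsf) \to \A(\Fsf,\Gsf)$ to be injective on the torsion classes coming from $\coh{1}(\Fsf,\Jsf_\bsf)$, and the argument you sketch for this does not work. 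At each finite level the kernel of $\iota_\ast$ is the image of $\bigl(K \otimes \Div^\circ(\Esf)\bigr)_{\Gal(\Esf/\Fsf)}$ with $K = Q^\vee_\Gsf/Q^\vee_{\Msf_\bsf}$, by right-exactness of coinvariants; but coinvariants of a torsion-free module under a finite group are not torsion-free in general (already $\ZZ$ with the sign action of $\ZZ/2\ZZ$ has coinvariants $\ZZ/2\ZZ$, and for tori exactly this sort of torsion is what computes $\coh{1}$). So the freeness of $K$ and of $\Div^\circ(\Esf)$ buys you nothing, and nothing you have written rules out that the torsion element $\bar\kappa_{\Jsf_\bsf}(\tau)$ lies in $\ker\iota_\ast$. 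In fact, modulo your earlier (correct) steps, this injectivity on the image of $\coh{1}(\Fsf,\Jsf_\bsf)$ is equivalent to the corollary itself, so it is not a detail but the entire content of the statement.

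The paper avoids $\iota_\ast$ altogether. After the same first reduction (phrased as replacing $\Gsf$ by the twisted form so that $\bbf' = 1$ and hence $\bar\kappa_\Gsf(\bbf) = 0$), it invokes Corollary~\ref{cor-same-Kottwitz-pt}, which identifies the fibre of $\bar\kappa_\Gsf$ over a fixed Kottwitz point with $\B(\Fsf,\Jsf_\bsf^{\rm sc})$ of the corresponding basic twist; this reduces to the case that $\Gsf$ is simply connected, where the triviality of the Newton point places $\bbf$ in the image of $\coh{1}(\Fsf,\Gsf)$ by Lemma~\ref{lem-trivial-Newton-pt}, and that set vanishes by Harder's theorem. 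Your alternative suggestion (a $z$-extension of $\Jsf_\bsf$ reducing to tori) gestures in this direction but, as stated, would only give information about $\A(\Fsf,\Jsf_\bsf)$ itself rather than about the injectivity of $\iota_\ast$; the missing ingredient in your write-up is precisely the appeal to Corollary~\ref{cor-same-Kottwitz-pt} together with the vanishing of $\coh{1}$ for simply connected groups.
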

 \begin{proof}
  By the previous corollary, we may replace $\Gsf$ by $\Jsf_{\bbf'}$ and assume that $\bbf' = 1$. By the Corollary~\ref{cor-same-Kottwitz-pt}, we may assume that $\Gsf$ is simply connected. By Lemma~\ref{lem-trivial-Newton-pt}, $\bbf$ corresponds to an element in $\coh{1}(\Fsf,\Gsf)$, which is trivial because $\Gsf$ is simply connected (by \cite{Harder:Galkoh3}, or alternatively $\coh{1}(\Fsf,\Gsf) \mono \A(\Fsf,\Gsf) = \{0\})$).
 \end{proof}
 
 \subsection{} By the previous corollary, we know that the Kottwitz and Newton point uniquely determine a $\sigma$-conjugacy class. If $\Gsf$ is quasi-split, the possible values of $(\bar\nu,\bar\kappa)$ can be calculated using Theorem~\ref{thm-steinberg} below; so we briefly describe how to reduce the description of $\Bsf(\Fsf,\Gsf)$ to the quasi-split case.
 
  By Proposition~\ref{prop ad-isomorphism} we can reduce to the case that $\Gsf$ is of adjoint type. Then any inner form of $\Gsf$ is can be described by twisting the Frobenius action with a basic element, thus Lemma~\ref{lem-inner-form} allows us to replace $\Gsf$ by a quasi-split inner form in this regard.

 \begin{thm} \label{thm-steinberg}
  Assume that $G$ is quasi-split. Then the map
  \[
   \bigsqcup_{\Tsf \subset \Gsf \atop max. \Fsf-torus} \B(\Fsf,\Tsf) \to \B(\Fsf,\Gsf)
  \]
  is surjective.
 \end{thm}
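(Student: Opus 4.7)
The plan is to reduce the theorem to Theorem~\ref{main-thm 1} (the basic case) by passing to the $\Fsf$-Levi subgroup centralising the Newton point of a suitable representative of $\bbf$.

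Fix a Borel subgroup $\Bsf\subset\Gsf$ defined over $\Fsf$ with maximal $\Fsf$-torus $\Tsf_0$, and let $\Asf\subset\Tsf_0$ be a maximal $\Fsf$-split torus. Given $\bbf\in\B(\Fsf,\Gsf)$, I first choose a representative $\bsf\in\bbf$ such that $\nu:=\nu_\Gsf(\bsf)$ is dominant: by Lemma~\ref{lem-Newton-pt-properties}~(2), different representatives of $\bbf$ give $\Gsf(\breve\Fsf)$-conjugate Newton points, and every $\Gsf(\breve\Fsf)$-orbit of cocharacters into $\Gsf_{\breve\Fsf}$ contains a unique dominant representative, since maximal tori of $\Gsf_{\breve\Fsf}$ are $\Gsf(\breve\Fsf)$-conjugate as $\breve\Fsf$ has cohomological dimension~$\le 1$. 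Since $\bar\nu_\Gsf(\bbf)$ is $\sigma$-invariant in $\Hom_{\breve\Fsf}(\DD_\Fsf,\Gsf)/\Gsf(\breve\Fsf)$ and $\sigma$ preserves dominance (because $\Bsf$ and $\Tsf_0$ are $\Fsf$-rational), uniqueness of the dominant representative forces $\sigma(\nu)=\nu$. Galois descent along $\breve\Fsf/\Fsf$ then shows $\nu$ is defined over $\Fsf$, and $\Msf:=\Cent_\Gsf(\nu)$ is a standard $\Fsf$-Levi of $\Gsf$, hence itself quasi-split.

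To show $\bsf\in\Msf(\breve\Fsf)$, choose $s\in\NN$ with $\sigma^s(\bsf)=\bsf$ and large enough that $\Nrm^{(s)}(\bsf)$ is semisimple. The equality
\[
\sigma\bigl(\Nrm^{(s)}(\bsf)\bigr)=\sigma(\bsf)\sigma^2(\bsf)\dotsm\sigma^s(\bsf)=\bsf^{-1}\cdot\Nrm^{(s)}(\bsf)\cdot\bsf,
\]
combined with Lemma~\ref{lem-Newton-pt-properties}~(1) and the $\Int$-equivariance of $\nu_{\Gsf_{\Fsf\otimes\FF_{q^s}}}$ (Lemma~\ref{lem-Newton-pt-func}~(1)), yields $\sigma(\nu_\Gsf(\bsf))=\Int(\bsf^{-1})\circ\nu_\Gsf(\bsf)$. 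Together with $\sigma(\nu)=\nu$ this gives $\Int(\bsf)\circ\nu=\nu$, so $\bsf\in\Msf(\breve\Fsf)$. The class $\bbf_\Msf:=[\bsf]_\Msf\in\B(\Fsf,\Msf)$ then maps to $\bbf$ under $\B(\Fsf,\Msf)\to\B(\Fsf,\Gsf)$ and is basic in $\Msf$, because $\nu_\Msf(\bsf)=\nu$ is central in $\Msf$ by construction.

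It remains to realise $\bbf_\Msf$ as coming from a maximal $\Fsf$-torus of $\Msf$. Running the construction in the proof of Theorem~\ref{main-thm 1} for the quasi-split group $\Msf$ and the invariant $\bar\kappa_\Msf(\bbf_\Msf)$ produces a maximal $\Fsf$-torus $\Tsf\subset\Msf$ (elliptic over $F_x$ at the finite set of places where $\bar\kappa_\Msf(\bbf_\Msf)$ is locally non-trivial, via~\cite{BuxWortman:Finiteness}) together with an element $\lambda\in\A(\Fsf,\Tsf)$ mapping to $\bar\kappa_\Msf(\bbf_\Msf)$. By Proposition~\ref{prop-kottwitz-pt-for-tori}, $\lambda$ corresponds to a class $\bbf_\Tsf\in\B(\Fsf,\Tsf)$ whose image in $\B(\Fsf,\Msf)$ has the same Kottwitz invariant as $\bbf_\Msf$; the ellipticity of $\Tsf$ forces this image to be basic, and hence equal to $\bbf_\Msf$ by Theorem~\ref{main-thm 1}. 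Since $\Tsf$ is also a maximal $\Fsf$-torus of $\Gsf$, the image of $\bbf_\Tsf$ in $\B(\Fsf,\Gsf)$ equals $\bbf$, as required.

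The main obstacle I anticipate is verifying that the image of $\bbf_\Tsf$ in $\B(\Fsf,\Msf)$ is genuinely basic, rather than merely matching $\bbf_\Msf$ at the level of Kottwitz invariants. The key is that ellipticity of $\Tsf$ over $F_x$ at each relevant place forces every locally Galois-invariant cocharacter of $\Tsf$ to lie in $Z(\Msf)_\QQ$ up to torsion, so that the global Newton point of $\bbf_\Tsf$, viewed inside $\Msf$, is central.
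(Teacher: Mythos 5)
Your reduction collapses at the very first step, and unfortunately it is the step that carries all the weight. You want to choose $\bsf\in\bbf$ so that $\nu=\nu_\Gsf(\bsf)$ is ``dominant'' and then deduce $\sigma(\nu)=\nu$ from uniqueness of the dominant representative in a $\Gsf(\breve\Fsf)$-orbit. That works in the local theory, where the Newton point is a rational cocharacter $\DD\to\Gsf$ with $X^\ast(\DD)=\QQ$, so that $\Hom(\DD,\Tsf_0)=X_\ast(\Tsf_0)_\QQ$ carries the usual Weyl chamber decomposition. Here, however, $\nu$ is a homomorphism $\DD_\Fsf\to\Gsf_{\breve\Fsf}$ with $X^\ast(\DD_\Fsf)=\Div^\circ(\scl\Fsf)$, so after conjugating into $\Tsf_0$ it lives in (roughly) $X_\ast(\Tsf_0)\otimes\Div^\circ(\scl\Fsf)\otimes\QQ$ modulo $W$, where $W$ acts only on the first factor. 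This space has no $W$-stable chamber decomposition and no canonical fundamental domain: already for $\SL_2$ the orbit of $\mu\otimes(D_1-D_2)$ is $\{\pm\mu\otimes(D_1-D_2)\}$, and a degree-zero divisor is in general neither ``positive'' nor ``negative''. Concretely, the localisations $\nu\circ\iota_x$ may require different Weyl elements at different places to be made dominant, and no single global conjugation achieves this. So neither existence nor uniqueness of a dominant representative holds, and the conclusion $\sigma(\nu)=\nu$ does not follow. Note that the statement you are implicitly after --- every class in $\B(\Fsf,\Gsf)$ for quasi-split $\Gsf$ has a representative with $\Fsf$-rational Newton point --- is exactly the Corollary that the paper \emph{deduces from} Theorem~\ref{thm-steinberg}, so invoking it here would be circular. (Your computation $\sigma(\nu)=\Int(\bsf^{-1})\circ\nu$ is correct, and the subsequent steps --- $\bsf\in\Msf(\breve\Fsf)$ once $\sigma(\nu)=\nu$, basicness in $\Msf$, and realising a basic class via an elliptic torus as in the proof of Theorem~\ref{main-thm 1} --- are reasonable; but they all sit downstream of the broken step.)

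For comparison, the paper's proof avoids the Newton point entirely: assuming $\Gsf^{\der}$ simply connected, it picks a regular semisimple element $\zsf\in\Jsf_\bsf(\Fsf)$, observes that the relation $\sigma(\zsf)=\bsf^{-1}\zsf\bsf$ makes the $\Gsf(\breve\Fsf)$-conjugacy class of $\zsf$ rational, invokes the appendix's Steinberg-type theorem to produce an $\Fsf$-rational element $\zsf_0$ in that class, and then conjugates $\bsf$ into the centraliser of $\zsf_0$, which is a maximal $\Fsf$-torus; a $z$-extension handles general $\Gsf$. If you want to salvage your approach, you would need an independent argument producing a $\sigma$-fixed point in the $\Gsf(\breve\Fsf)$-orbit of $\nu$, which is a nontrivial descent problem and not supplied by dominance.
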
   
 \begin{proof}
  We first assume that the derived group of $\Gsf$ is simply connected. Let $\bsf \in \Gsf(\breve\Fsf)$ be arbitrary. We fix an element $\zsf \in \Jsf_\bsf(\Fsf)$ which is regular semisimple in $\Gsf$. Such an element exists, since the set of $\Gsf$-regular semisimple elements in $\Jsf_\bsf(\cl{\Fsf})$ is open and non-empty as $\Jsf_b$ contains a maximal torus of $\Gsf$; as $\Jsf_\bsf(\Fsf)$ is a dense subset of $\Jsf_\bsf(\cl{\Fsf})$, it must intersect this set non-emptily.
  Now
  $
   \sigma(\zsf) = \bsf\iv \zsf \bsf,
  $
  in particular the conjugacy class $C$ of $\zsf$ in $\Gsf(\breve\Fsf)$ is rational. By Corollary~\ref{cor} there exists an $\Fsf$-rational element $\zsf_0 \in C$. Write $\zsf = \gsf \zsf_0 \gsf\iv$. Then we get
 \[
  \sigma(\gsf) \sigma(\zsf_0) \sigma(\gsf)\iv  = \sigma(\zsf) = \bsf\iv \gsf \zsf_0 \gsf\iv \bsf.
 \]
 Hence $\gsf\iv \bsf\sigma(\gsf)$ is an element of the centraliser of $\zsf_0$, which is an $\Fsf$-torus by assumption.
 
 To prove the remaining cases, we consider a $z$-extension
  \begin{center}
   \begin{tikzcd}
    1 \ar{r} & \Zsf \ar{r} & \Gsf_1 \ar{r} & \Gsf \ar{r} & 1
   \end{tikzcd}
  \end{center}
 with $\Gsf_1^{\rm der}$ is simply connected.  We obtain a commutative diagram
 \begin{center}
  \begin{tikzcd}
   \bigsqcup\limits_{\Tsf_1 \subset \Gsf_1 \atop max. \Fsf-torus} \B(\Fsf,\Tsf_1) \ar[two heads]{d} \ar[two heads]{r} & \B(\Fsf,\Gsf_1) \ar[two heads]{d}\\
   \bigsqcup\limits_{\Tsf \subset \Gsf \atop max. \Fsf-torus} \B(\Fsf,\Tsf) \ar{r} & \B(\Fsf,\Gsf).
  \end{tikzcd}
 \end{center}
 Hence the lower map must also be surjective.
 \end{proof}
 
 For future reference, we record the following helpful result.
 
 \begin{cor}
  Let $\Gsf$ be quasi-split and $\bbf \in \B(\Fsf,\Gsf)$. Then there exists $\bsf \in \bbf$ such that $\nu_\Gsf(\bsf)$ is rational.
 \end{cor}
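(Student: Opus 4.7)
The plan is to combine Theorem~\ref{thm-steinberg} with the very construction of the Newton point on tori. First, since $\Gsf$ is quasi-split, Theorem~\ref{thm-steinberg} produces a maximal $\Fsf$-torus $\Tsf \subset \Gsf$ together with an element $\bsf \in \Tsf(\breve\Fsf)$ whose $\sigma$-conjugacy class in $\Gsf(\breve\Fsf)$ equals $\bbf$. So it is enough to show that for such a $\bsf$, the Newton point $\nu_\Gsf(\bsf)$ is automatically defined over $\Fsf$.

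For the torus $\Tsf$, this is essentially built into the definition. Indeed, $\nu_\Tsf(\bsf)$ was constructed as the image of the Kottwitz point $\bar\kappa_\Tsf([\bsf])$ under the morphism (\ref{Kottwitz-to-Newton}), whose target is $(\Div^\circ(\scl\Fsf)\otimes X_\ast(\Tsf))^{\Gal(\scl\Fsf/\Fsf)} = \Hom_\Fsf(\DD_\Fsf,\Tsf)$. Hence $\nu_\Tsf(\bsf)$ is by construction an $\Fsf$-rational homomorphism $\DD_\Fsf \to \Tsf$. Next, I would apply Lemma~\ref{lem-Newton-pt-func}(1) to the $\Fsf$-rational inclusion $\iota\colon \Tsf \hookrightarrow \Gsf$ to obtain
\[
 \nu_\Gsf(\bsf) \;=\; \iota \circ \nu_\Tsf(\bsf),
\]
which is $\Fsf$-rational as a composition of $\Fsf$-morphisms.

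Thus the substantive work is entirely absorbed into Theorem~\ref{thm-steinberg} (which already provides a representative lying in a rational torus); after that, the corollary is formal. I do not expect any real obstacle beyond correctly invoking the above theorem and keeping straight that the Newton point for a torus element has a canonical lift to $\Hom_\Fsf(\DD_\Fsf,\Tsf)$, so that composing with the rational embedding of the torus yields a rational morphism into $\Gsf$.
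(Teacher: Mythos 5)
Your proposal is correct and follows exactly the paper's argument: the paper's own proof is the one-line observation that the statement "follows from the previous theorem since the Newton point of a torus is automatically rational," which is precisely your combination of Theorem~\ref{thm-steinberg}, the fact that $\nu_\Tsf(\bsf)$ lands in $\Hom_\Fsf(\DD_\Fsf,\Tsf)$ by construction, and functoriality of $\nu$ along the rational embedding $\Tsf\hookrightarrow\Gsf$. No issues.
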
 
 \begin{proof}
  This follows from the previous theorem since the Newton point of a torus is automatically rational.
 \end{proof}

\appendix
\section{On rational conjugacy classes}
 \subsection{Statement}
 Let $F$ be \emph{any} field. (In the intended setting, $F$ will be a global or local function field, which is \emph{imperfect}.) Let $G$ be a (connected) \emph{quasi-split} reductive group over $F$, and assume that the derived subgroup $G^{\der}$ of $G$ is \emph{simply connected}. 
 
Note that any conjugacy class  $C\subset G(\cl{F})$ is the set of $\cl F$-points of some locally closed subvariety of $G_{\cl F}$. We say that a conjugacy class $C\subset G(\cl{F})$ is defined over $F$ if the corresponding locally closed subvariety is defined over $F$.

 \begin{thmsub}\label{thm1}
Let $C\subset G(\cl{F})$ be a \emph{regular semisimple} conjugacy class defined over $F$. Then there exists an element $x\in G(F)\cap C$.
\end{thmsub}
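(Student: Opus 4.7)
The approach I would take is to invoke Steinberg's cross-section for the adjoint quotient. Consider the categorical quotient $\chi \colon G \to Q := \Spec \mathcal{O}(G)^G$ for the conjugation action of $G$ on itself. Because $G$ is quasi-split reductive with $G^{\der}$ simply connected, $Q$ is a smooth affine $F$-scheme, and over the open subscheme of regular semisimple elements $\chi$ separates conjugacy classes: each geometric fibre there is a single regular semisimple conjugacy class. Consequently, the hypothesis that $C \subset G(\cl F)$ is $F$-defined translates into $\chi(C)$ being a single $F$-rational point $c \in Q(F)$ lying in the regular semisimple locus, with $C = \chi^{-1}(c)(\cl F)$.

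The key tool is Steinberg's section $s \colon Q \to G$. To construct it, I would choose an $F$-rational pinning of $G$ (a Borel subgroup, a maximal $F$-torus maximally split inside it, and compatible root group parametrisations, all Galois-equivariant, hence defined over $F$). Such a pinning exists precisely because $G$ is quasi-split and $G^{\der}$ is simply connected. Steinberg's explicit formula, expressed in terms of a reduced expression for the longest Weyl element and the pinning data, then produces a morphism of $F$-schemes $s \colon Q \to G$ with $\chi \circ s = \mathrm{id}_Q$ and with image contained in the regular locus. Setting $x := s(c) \in G(F)$, one has $\chi(x) = c$, so over $\cl F$ the element $x$ lies in the regular semisimple fibre $\chi^{-1}(c)(\cl F) = C$, yielding the desired $F$-rational representative.

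The main technical obstacle is the second step, namely constructing the Steinberg section in the quasi-split reductive setting over a possibly imperfect base field $F$. The original construction is for split simply connected semisimple groups; descending both the smoothness of $Q$ and Steinberg's explicit formulas to $F$ rests on having a pinning defined over $F$, which is exactly where the simply-connectedness of $G^{\der}$ is essential. To extend from the semisimple simply connected case to reductive $G$ with $G^{\der}$ simply connected, I would use the decomposition $Q \cong Q_{G^{\der}} \times G^{\ab}$ (here $G^{\ab} = G/G^{\der}$ is a torus), assembling the section from Steinberg's section on the derived part together with the identity on $G^{\ab}$ and a splitting $G^{\ab} \hookrightarrow G$ provided by a chosen $F$-torus. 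Once this technicality is in place, the argument is formal.
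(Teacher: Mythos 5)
Your overall strategy --- pass to the adjoint quotient $\underline\chi\colon G\to T\sslash W$, observe that an $F$-rational regular semisimple class is the geometric fibre over an $F$-point $\underline c$, and produce a rational point of that fibre via a Steinberg cross-section --- is exactly the strategy of the appendix. But the step you yourself flag as ``the main technical obstacle'' contains a genuine gap, and it is precisely the point the appendix spends most of its length on. An $F$-rational (Galois-stable) pinning of a quasi-split group always exists; simple connectedness of $G^{\der}$ is not what is at stake there. What does \emph{not} follow from the pinning is that Steinberg's normal-form variety $N=\prod_i(X_i\cdot\dot\sigma_i)$ --- equivalently, the section $s$ --- is defined over $F$. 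Galois-stability of $N$ requires that for Galois-conjugate simple roots $\alpha_i$ and $\alpha_i^{\gamma}$ the factors $X_i\dot\sigma_i$ and $X_i^{\gamma}\dot\sigma_i^{\gamma}$ commute, and this fails exactly for the order-two automorphism of the $A_{2n}$ diagram, i.e.\ for odd quasi-split special unitary factors of $G^{\der}$ (already for quasi-split $\SU_3$ one cannot arrange $N=\bar N$). Steinberg's rationality theorem for $N$ excludes this case, and the odd unitary case is handled in \cite{Steinberg:RegularElements} by a different device, reproduced in the appendix: one stratifies $T\sslash W$ into the hyperplane $V_1$ where $1$ is an eigenvalue and its complement $V_0$, and constructs \emph{two} disjoint closed $F$-subvarieties $N_1'$ (contained in the even unitary subgroup $\SU_{2n}\subset\SU_{2n+1}$) and $N_0'$ mapping isomorphically onto $V_1$ and $V_0$ respectively. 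Without this, your element $x=s(c)$ does not exist as described whenever $G^{\der}$ has such a factor.

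A secondary problem is the reduction from $G^{\der}$ to $G$. The quotient $T\sslash W$ fibres over $D=G/G^{\der}$ but is not a product $Q_{G^{\der}}\times D$ in general, and the surjection $G\thra D$ admits no group-theoretic splitting $D\hra G$ in general, so the section cannot be assembled the way you describe. What is actually needed, and what the paper proves (Lemma~\ref{lem:tori}), is surjectivity of $T(F)\to D(F)$ on rational points: this is where simple connectedness of $G^{\der}$ genuinely enters, since it makes $T^{\der}=T\cap G^{\der}$ an induced torus and hence $\coh{1}(F,T^{\der})=1$. One then lifts the image $\bar t\in D(F)$ of $C$ to $t\in T(F)$ and locates the rational representative in the translate $t\cdot N'$. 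Note also that Kottwitz's shortcut $G(\cl F)=Z(\cl F)\cdot G^{\der}(\cl F)$ is unavailable over $\scl F$ (consider $\GL_n$ with $p\mid n$), which is why this pointwise lifting, rather than a central translation, is the argument that survives over imperfect fields.
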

If $F$ is perfect then the above theorem was obtained for any $F$-rational semisimple conjugacy class $C$  (but not necessarily regular); \emph{cf.} \cite[Thm.~4.1]{Kottwitz:RatlConjRedGp}. If $F$ is not necessarily perfect and $G$ is  semisimple, then the above theorem was proved in \cite[\S~8.6]{BorelSpringer:RationalityPropertiesII} by modifying the argument for perfect base fields in \cite[Thm.~1.7]{Steinberg:RegularElements}. In the semisimple case, the proof uses the \emph{regularity assumption} on $C$ if the base field is imperfect. 

In short, the above proposition can be obtained by modifying the proof of Steinberg's theorem \cite[Thm.~1.7]{Steinberg:RegularElements} in the reductive setting, using some ideas from \cite[Thm.~4.1]{Kottwitz:RatlConjRedGp}. As we are not aware of any reference, let us provide a proof here.

We also need the following refinements of Theorem~\ref{thm1}:

\begin{corsub}\label{thm2}
We keep all the assumptions from Theorem~\ref{thm1}, and \emph{assume} that $F$ has cohomological dimension $\leqslant 1$. Then any regular semisimple conjugacy class $C\subset G(\cl{F})$ defined over $F$ contains a \emph{unique} $G(F)$-conjugacy class.
\end{corsub}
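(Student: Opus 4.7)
The plan is to combine existence from Theorem~\ref{thm1} with a standard cocycle calculation, reducing uniqueness to the vanishing of $\coh{1}(F,T)$ for a maximal $F$-torus $T$ of $G$, and then to invoke the well-known vanishing of Galois cohomology of tori over fields of cohomological dimension at most one.

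First I would fix $x \in C \cap G(F)$ (which exists by Theorem~\ref{thm1}) and set $T := Z_G(x)$; since $x$ is regular semisimple, $T$ is a maximal $F$-torus of $G$. Given any other $y \in C \cap G(F)$, I would pick $g_0 \in G(\scl F)$ with $y = g_0 x g_0^{-1}$. For each $\gamma \in \Gal(\scl F/F)$, applying $\gamma$ to this identity and using $x,y \in G(F)$ gives $c_\gamma := g_0^{-1}\gamma(g_0) \in T(\scl F)$. The map $\gamma \mapsto c_\gamma$ is a continuous $1$-cocycle whose class $[c] \in \coh{1}(F,T)$ is independent of the choice of $g_0$ (replacing $g_0$ by $g_0 t$ with $t \in T(\scl F)$ modifies $c$ by a $T$-coboundary). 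A direct verification shows that $y$ is $G(F)$-conjugate to $x$ if and only if $[c] = 1$: if $c_\gamma = t^{-1}\gamma(t)$ for some $t \in T(\scl F)$, then $g := g_0 t^{-1}$ is $\Gal(\scl F/F)$-invariant, hence lies in $G(F)$, and $g x g^{-1} = y$ since $t$ centralises $x$; the converse is immediate.

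It will therefore suffice to show $\coh{1}(F,T) = 1$ for any $F$-torus $T$, under the hypothesis $\mathrm{cd}(F)\leqslant 1$. This is a classical vanishing theorem: choosing a finite Galois extension $L/F$ that splits $T$, with $\Delta := \Gal(L/F)$, Hilbert 90 gives $\coh{1}(L,T) = 1$, and inflation-restriction reduces the claim to $\coh{1}(\Delta, T(L)) = 1$; this in turn follows from a quasi-trivial resolution $1 \to T \to P \to Q \to 1$ with $P$ a finite product of Weil restrictions $\Res_{L_i/F} \GG_m$, via Shapiro's lemma together with the vanishing of $\mathrm{Br}(L')$ for every finite extension $L'/F$ (a consequence of $\mathrm{cd}(F) \leqslant 1$). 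Alternatively, one can directly invoke the Borel--Springer vanishing \cite{BorelSpringer:RationalityPropertiesII} applied to the connected reductive group $T$. The main (and essentially only) obstacle is this cohomological vanishing; once it is in hand, the conclusion is formal and requires neither the quasi-split hypothesis nor the simply-connectedness of $G^{\der}$ beyond their use in Theorem~\ref{thm1}.
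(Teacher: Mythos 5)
Your strategy---reduce uniqueness to a cocycle computation valued in the centraliser $T=Z_G(x)$ and then kill $\coh{1}(F,T)$ using $\mathrm{cd}(F)\leqslant 1$---is essentially the paper's own: the paper phrases the cocycle step as Kottwitz's bijection between the $G(F)$-classes inside a given $G(\scl F)$-class and $\ker\bigl(\coh{1}(F,G_x)\to\coh{1}(F,G)\bigr)$, and then quotes Borel--Springer for the vanishing. However, there is a genuine gap at the very first step of your uniqueness argument: you ``pick $g_0\in G(\scl F)$ with $y=g_0xg_0^{-1}$''. The hypothesis only gives that $x$ and $y$ both lie in $C\subset G(\cl F)$, i.e.\ that they are conjugate under $G(\cl F)$. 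Over the imperfect fields this appendix is written for one has $\scl F\subsetneq\cl F$, and passing from $G(\cl F)$-conjugacy to $G(\scl F)$-conjugacy is precisely what the paper's proof establishes in its first paragraph (by moving $x$ and $y$ into a common maximal torus over $\scl F$, noting they lie in one Weyl orbit, and lifting Weyl group elements to $G(\scl F)$ via SGA3). Without some such argument your cocycle $c_\gamma=g_0^{-1}\gamma(g_0)$ cannot even be formed: if $g_0$ only exists in $G(\cl F)$, then Galois descent from $\cl F$ recovers only the perfect closure of $F$, not $F$ itself. The gap is fillable---for instance, the transporter $\{g\in G\mid gxg^{-1}=y\}$ is a torsor under $Z_G(x)$, which is a smooth maximal torus since $x$ is regular semisimple and $G^{\der}$ is simply connected, so this transporter is a nonempty smooth $F$-variety and hence has $\scl F$-points---but as written the step is asserted rather than proved, and it is the one place where ``separably closed versus algebraically closed'' genuinely matters.

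Two smaller points. First, your closing remark that simple connectedness of $G^{\der}$ is not needed beyond Theorem~\ref{thm1} is not accurate: it is what guarantees that $Z_G(x)$ is \emph{connected}, hence a torus (for regular semisimple $x$ one only knows in general that $Z_G(x)^{\circ}$ is a maximal torus), and both the target of your cocycle and the cohomological vanishing require this connectedness. Second, your first route to $\coh{1}(F,T)=1$ is shaky as stated: the long exact sequence attached to $1\to T\to P\to Q\to 1$ with $P$ quasi-trivial exhibits $\coh{1}(F,T)$ as the cokernel of $P(F)\to Q(F)$, which is not obviously trivial; the clean argument is exactly your stated alternative, namely the Borel--Springer vanishing of $\coh{1}(F,H)$ for connected reductive $H$ over a field of cohomological dimension $\leqslant 1$, which is what the paper invokes.
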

When $F$ is perfect, this theorem was proved by Steinberg \cite[Corollary~10.3]{Steinberg:RegularElements} for any semisimple conjugacy classes.
\begin{proof}
 We first show that $C$ contains a unique $G(\scl{F})$-conjugacy class $\scl{C} \subset G(\scl{F})$, necessarily defined over $F$. Existence is guaranteed by Theorem~\ref{thm1}. On the other hand let $x,y \in C \cap G(\scl{F})$. Since all maximal tori are conjugated to each other over $\scl{F}$, we can replace $x$ and $y$ by $G(\scl{F})$-conjugates so that they are contained in a given $\scl{F}$-torus $T$. Since $x$ and $y$ are conjugated under $G(\cl{F})$, they lie in the same Weyl group orbit. Since every Weyl group element lifts to an element of $G(\scl{F})$ by \cite[Exp.~XXII, Cor.~3.8]{SGA3.3}, it follows that $x$ and $y$ are $G(\scl{F})$-conjugates of each other.

The statement now follows from the argument in \cite[\S~3]{Kottwitz:RatlConjRedGp}. Let $x\in C\cap G(F)$, which exists by Theorem~\ref{thm1}. Using the assumption that $G^{\der}$ is simply connected, it follows that  the set of $G(F)$-conjugacy classes contained in $\scl{C}$ is in natural bijection with
	\[\ker \left(\mathrm{H}^1(F,G_x) \to \mathrm{H}^1(F,G) \right),\]
	where $G_x$ is the centraliser of $x$ in $G$; \emph{cf.} \cite[\S~3]{Kottwitz:RatlConjRedGp}. On the other hand, if $F$ has cohomological dimension $\leqslant1$ then $\mathrm{H}^1(F,G_x)$ vanishes as $G_x$ is connected reductive; \emph{cf.}~\cite[\S\S~8.6]{BorelSpringer:RationalityPropertiesII}.
\end{proof}

\begin{corsub}\label{cor}
	Let $G$ be as in Theorem~\ref{thm1}, and let $E/F$ be a (not necessarily finite) Galois extension. Assume that $E$ has cohomological dimension $\leqslant 1$. Then any regular semisimple $\Gal(E/F)$-stable conjugacy class in $G(E)$ contains an element in $G(F)$.	
	
	In particular, if $F$ is a global function field and $E\coloneqq\breve F $ then any $\sigma$-stable regular semisimple conjugacy class in $G(\breve F)$ contains an element in $G(F)$.
\end{corsub}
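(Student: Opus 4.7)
The plan is to reduce the statement to Theorem~\ref{thm1} and Corollary~\ref{thm2} by enlarging $C$ to a conjugacy class over an algebraic closure. Concretely, I would fix an algebraic closure $\cl F$ of $E$ (and hence of $F$) and let $\cl C \subset G(\cl F)$ denote the $G(\cl F)$-conjugacy class containing $C$; this is again regular semisimple. The first thing to verify is that $\cl C$ is defined over $F$ in the sense of Theorem~\ref{thm1}. This is essentially automatic: since $C \subset G(E)$, the subgroup $\Gal(\cl F/E) \subset \Gal(\cl F/F)$ fixes $C$ pointwise, while $\Gal(E/F)$ stabilises $C$ setwise by hypothesis. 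Combining these via the surjection $\Gal(\cl F/F) \twoheadrightarrow \Gal(E/F)$ yields $\tau(\cl C) = \cl C$ for every $\tau \in \Gal(\cl F/F)$, so the (reduced) locally closed subvariety cut out by $\cl C$ descends to $F$.

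Theorem~\ref{thm1} then produces an element $x \in G(F) \cap \cl C$, and what remains is to upgrade $x \in \cl C$ to $x \in C$. For this I would invoke Corollary~\ref{thm2} but over the base field $E$ in place of $F$. The hypotheses transfer directly: $G_E$ is still quasi-split with simply connected derived subgroup, $\cl C$ is defined over $E$ because it is defined over $F$, and $\mathrm{cd}(E) \leqslant 1$ by assumption. The corollary forces $\cl C \cap G(E)$ to consist of a single $G(E)$-conjugacy class, which must therefore coincide with $C$; in particular $x \in C \cap G(F)$, as required.

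The ``in particular'' assertion is then immediate: the extension $\breve F/F$ is procyclic Galois with Frobenius generator $\sigma$, so $\sigma$-stability coincides with $\Gal(\breve F/F)$-stability, and $\mathrm{cd}(\breve F) \leqslant 1$ by Tsen's theorem. I do not anticipate any substantial obstacle; the argument is essentially a bookkeeping combination of the two preceding results of the appendix, with the only mild subtlety being the Galois-descent step for $\cl C$ described above, which is itself formal.
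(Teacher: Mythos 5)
Your argument is correct and is exactly the intended one: pass to the geometric class $\cl C$, check it is defined over $F$ (which, as you note, reduces to the image point in $T\sslash W$ being $F$-rational), apply Theorem~\ref{thm1} over $F$ for existence of a rational point, and apply Corollary~\ref{thm2} over $E$ to identify $\cl C\cap G(E)$ with the given class $C$. The paper compresses all of this into ``immediate consequence of the previous corollary,'' so your write-up just supplies the bookkeeping.
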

\begin{proof}
 This is an immediate consequence of the previous corollary.
\end{proof}

It remains to prove Theorem~\ref{thm1}. Unfortunately, we cannot naively adapt the proof of \cite[Theorem~4.1]{Kottwitz:RatlConjRedGp}. Indeed, to reduce the proof to the semisimple simply connected case, \emph{loc.~cit.} used the equality
\[G(\cl{F}) = Z(\cl{F})\cdot G^{\der}(\cl{F}),\]
where $Z$ is the centre of $G$. On the other hand, we do not have such an equality over $\scl{F}$ as one can see from the case when $G=\GL_n$ with $p|n$. Therefore, we have to redo part of proof of Steinberg's theorem in \cite[\S9]{Steinberg:RegularElements} in the reductive setting. Let us now give a proof.
 \subsection*{Notations}
We choose a maximal split $F$-torus of $G$, and let $T\subset G$ be its centraliser. (Then $T$ is a maximal $F$-torus because $G$ is quasi-split.) We choose a $F$-rational Borel subgroup $B\subset G$ containing $T$. We work with the (absolute) root datum with respect to  $T_{\scl{F}}$, and we get a natural $\Gal(\scl{F}/F)$-action on roots, coroots, and the (absolute) Weyl group $W$. Therefore, we may view $W$ as an finite \'etale group scheme over $F$, and its action on $T$ is defined over $F$.

We set $T^{\der}\coloneqq T\cap G^{\der}$ and let $Z$ denote the centre of $G$. Let $D\coloneqq G/G^{\der}$ denote the cocentre of $G$. Then we have the following short exact sequence of tori:
\begin{equation}\label{eq:ses-tori}
\xymatrix@1{1\ar[r] & T^{\der} \ar[r] &T \ar[r] & D\ar[r] &1}.
\end{equation}

\begin{lemsub}\label{lem:tori}
Assume that $G^{\der}$ is simply connected.
\begin{enumerate}
\item\label{lem:tori:res} The torus $T^{\der}$ is a product of $\Res_{F_i/F}\GG_m$ for finite separable extensions $F_i/F$.
\item\label{lem:tori:ses} For any algebraic separable extension $F'/F$, the short exact sequence~\eqref{eq:ses-tori} induces a short exact sequences on $F'$-points ; i.e., 
\[ \xymatrix@1{1\ar[r] & T^{\der}(F') \ar[r] &T(F') \ar[r] & D(F')\ar[r] &1}.\]
\end{enumerate}
\end{lemsub}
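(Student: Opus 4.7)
The plan is to deduce both parts from the fact that, under the hypotheses, the cocharacter lattice $X_\ast(T^{\der})$ is a permutation $\Gal(\scl F/F)$-module.

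For part~\ref{lem:tori:res}, I would use that $T^{\der}$ is a maximal torus of the simply connected quasi-split group $G^{\der}$. Since $G^{\der}$ is simply connected, the simple coroots $\alpha_1^\vee,\dotsc,\alpha_n^\vee$ (relative to the $F$-rational Borel $B\cap G^{\der}$) form a $\ZZ$-basis of $X_\ast(T^{\der})$. Since $B$ and $T$ are defined over $F$, the action of $\Gal(\scl F/F)$ preserves the based root datum, hence it permutes the set $\{\alpha_i^\vee\}$. Decomposing this finite set into Galois orbits $O_1,\dotsc,O_r$ and letting $F_i$ denote the fixed field of the stabiliser of a chosen element of $O_i$, one obtains a Galois-equivariant isomorphism
\[
X_\ast(T^{\der})\;\cong\;\bigoplus_{i=1}^r \ZZ[\Gal(\scl F/F)/\Gal(\scl F/F_i)]\;=\;\bigoplus_{i=1}^r X_\ast(\Res_{F_i/F}\GG_m).
\]
Dualising (or rather, applying the equivalence between tori and their cocharacter modules) yields $T^{\der}\cong\prod_{i=1}^r \Res_{F_i/F}\GG_m$, with each $F_i/F$ finite separable.

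For part~\ref{lem:tori:ses}, I would take the long exact cohomology sequence of~\eqref{eq:ses-tori} over $F'$, which reduces the claim to $\coh{1}(F',T^{\der})=0$. By part~\ref{lem:tori:res} and Shapiro's lemma,
\[
\coh{1}(F',T^{\der})\;\cong\;\prod_{i=1}^r \coh{1}(F'\otimes_F F_i,\GG_m).
\]
Since $F'/F$ is algebraic and separable and each $F_i/F$ is finite separable, the ring $F'\otimes_F F_i$ is a finite product of separable algebraic extension fields of $F'$, and Hilbert~90 applied to each factor gives the vanishing.

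The only subtle point is the bookkeeping in part~\ref{lem:tori:res}: one must verify that the identification of $T^{\der}$ with the torus having cocharacter module $\bigoplus X_\ast(\Res_{F_i/F}\GG_m)$ is actually $F$-rational, which is automatic once one checks that the chosen $\ZZ$-basis of simple coroots is Galois-stable as a set. This is where the quasi-split hypothesis is used in an essential way; without it the Galois action would only permute the coroots up to an inner twist, and $X_\ast(T^{\der})$ need not be a permutation module.
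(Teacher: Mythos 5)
Your proof is correct and follows essentially the same route as the paper: for part (1) the paper simply cites the argument of Kottwitz \cite[p.~793]{Kottwitz:RatlConjRedGp}, which is exactly the permutation-module argument on simple coroots that you spell out, and for part (2) the paper likewise reduces to the vanishing of $\coh{1}(F',T^{\der})$ via Shapiro's lemma and Hilbert's Theorem~90.
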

\begin{proof}
Since $G^{\der}$ is simply connected, it is a product of $F$-simple factors. So the argument in \cite[p.~793]{Kottwitz:RatlConjRedGp} shows claim~\eqref{lem:tori:res}. Then claim~\eqref{lem:tori:ses} when $T^{\der}$ splits over $F'$ follows from the vanishing of $\mathrm{H}^1_{\rm fl}(F',\GG_m)$ (\emph{cf.} \cite[Ch.~III, Proposition~4.9]{milne:etale}), and the general case of \eqref{lem:tori:ses} follows from \eqref{lem:tori:res} via Shapiro's lemma and Hilbert's~Theorem~90 for Galois cohomology. 
\end{proof}

\subsection{Normal forms: Review of Steinberg's construction}
We review the work of Steinberg's on `normal forms' for regular conjugacy classes over an algebraically closed field $\overline F$. We also investigate which results hold over a separably closed fields $\scl{F}$.

In this section, let us assume that $G$ is \emph{semi-simple}, simply connected and quasi-split over $F$. (Later, we will apply the constructions in this subsection to the derived subgroup.)

Let $r$ be the absolute rank of $G$, and choose (absolute) simple roots $\alpha_1,\cdots,\alpha_r$ corresponding to our choice of $(T,B)$. Let $\sigma_i$ denote the reflection with respect to $\alpha_i$, which is an element of the absolute Weyl group, and we choose a lift $\dot\sigma_i\in N_{G}(T)(\scl{F})$ of $\sigma_i$.  Let $X_i$ denote the root subgroup of $\alpha_i$, which is a $1$-dimensional unipotent subgroup of $G$ defined over $\scl{F}$. Note that the absolute Galois group $\Gal(\scl{F}/F)$ acts on $\{\alpha_1,\cdots,\alpha_r\}$, hence it permutes $\sigma_i$'s and $X_i$'s.
 
Let us consider the following subvariety\footnote{Note that $N$ is \emph{a priori} a constructible subset, though it turns out to be a closed subvariety.}  of $G_{\scl{F}}$ (not just over $\cl{F}$):
\begin{equation}
N \coloneqq \prod_{i=1}^r (X_i \cdot \dot\sigma_i).
\end{equation} 
Steinberg \cite{Steinberg:RegularElements} defined $N$ only over $\cl F$, which is one of the reasons why he needed to assume $F$ is perfect. Then Borel and Springer \cite[\S8]{BorelSpringer:RationalityPropertiesII} realised that under the stronger foundation in algebraic groups many things in \cite{Steinberg:RegularElements} can be done over separably closed fields in place of algebraically closed fields (except Jordan decomposition). In particular, $N$ can be defined over $\scl{F}$.

\begin{propsub}\label{prop:Steinberg} Let $G$ be a semi-simple, simply connected and quasi-split reductive group over $F$.
\begin{enumerate}
\item (\emph{Cf.} \cite[Theorem~7.1]{Steinberg:RegularElements}) The natural map $\prod_{i=1}^r X_i \mapsto N$ is an isomorphism of schemes over $\scl{F}$. Furthermore, $N$ is \emph{closed} in $G$.
\item \label{prop:Steinberg:indep} (\emph{Cf.} \cite[Lemma~7.5]{Steinberg:RegularElements}) Let us consider a different lift $\dot\sigma_i'$ of $\sigma_i$ for each $i$, and set $N'\coloneqq\prod_{i=1}^r(X_i\dot\sigma_i')$. Then there exists $t,t'\in T(\scl{F})$ such that 
\[N' = t'\cdot N = tNt^{-1}.\]
\item \label {prop:Steinberg:perm} (\emph{Cf.} \cite[Proposition~7.8]{Steinberg:RegularElements}) For any permutation $\tau$ of $\{1,\cdots,r\}$ we set
\[
N^\tau \coloneqq \prod_{i=1}^r X_{\tau(i)}\dot\sigma_{\tau(i)}.
\]
Then each element of $N^\tau (\scl{F}) $ is $G(\scl{F})$-conjugate to some element in $N (\scl{F})$.
\end{enumerate}
	
\end{propsub}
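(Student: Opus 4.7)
The three claims are adaptations of results in \cite[\S7]{Steinberg:RegularElements}, originally stated over $\cl F$, to the separably closed setting. The plan is to carry over Steinberg's arguments, taking care in the passage $\cl F \leadsto \scl F$; the structure theory on which Steinberg relies---parameterisation of unipotent subgroups by products of root subgroups, the relation $\dot\sigma_j X_\alpha \dot\sigma_j^{-1} = X_{\sigma_j(\alpha)}$, and the set-theoretic normalisation of root subgroups by $T$---is available over any separably closed field, as emphasised in \cite[\S8]{BorelSpringer:RationalityPropertiesII}.

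For (1), I would push every $\dot\sigma_i$ to the right of the product $x_1 \dot\sigma_1 \cdots x_r \dot\sigma_r$ using the identity $\dot\sigma_j X_\alpha \dot\sigma_j^{-1} = X_{\sigma_j(\alpha)}$. This rewrites an arbitrary element of $N$ as $u \cdot \dot w$ with $u \in \prod_{i=1}^{r} X_{\beta_i}$, $\beta_i = \sigma_1 \cdots \sigma_{i-1}(\alpha_i)$, and $\dot w = \dot\sigma_1 \cdots \dot\sigma_r$. Since $w = \sigma_1 \cdots \sigma_r$ is a Coxeter element of length $r$, the roots $\beta_i$ are precisely the positive roots sent to negative ones by $w^{-1}$, so the multiplication map $\prod X_{\beta_i} \hookrightarrow U$ is a closed immersion of $\scl F$-schemes by standard Bruhat theory. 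Combining, $\prod X_i \to N$ is an isomorphism of $\scl F$-schemes. For the closedness of $N$ in $G$, I would invoke Steinberg's cross-section theorem, adapted to $\scl F$ via \cite{BorelSpringer:RationalityPropertiesII}: the Chevalley adjoint quotient $\chi \colon G \to G /\!\!/ G$ restricts to an isomorphism $N \xrightarrow{\sim} G /\!\!/ G$, realising $N$ as the image of a section of $\chi$, hence closed.

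For (2), writing $\dot\sigma_i' = t_i \dot\sigma_i$ with $t_i \in T(\scl F)$ and using that $T$ normalises each $X_i$ as a subset (so $X_j \cdot s = s \cdot X_j$ for any $s \in T$), I would push every $t_i$ to the far left of $\prod (X_i \dot\sigma_i')$. Each time a $t_i$ crosses a previous $\dot\sigma_j$ it gets Weyl-conjugated; collecting the result yields
\[
s \;=\; t_1 \cdot \sigma_1(t_2) \cdot \sigma_1 \sigma_2(t_3) \cdots \sigma_1 \cdots \sigma_{r-1}(t_r) \;\in\; T(\scl F),
\]
so that $N' = s \cdot N$ and $t' = s$. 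A parallel computation shows $t N t^{-1} = (t \cdot c(t)^{-1}) \cdot N$ for $c = \sigma_1 \cdots \sigma_r$, so it suffices to produce $t \in T(\scl F)$ with $s = t \cdot c(t)^{-1}$. This is where I expect the main obstacle to arise: since $c$ is a Coxeter element of the semi-simple simply connected group $G$, it acts on $X_\ast(T) \otimes \QQ$ without fixed vectors, so $1-c$ is an isogeny of $T$; but in bad characteristic the cokernel of $1-c$ on cocharacters can carry $p$-torsion, whence $1-c$ need not be surjective on $\scl F$-points over imperfect $\scl F$. A natural workaround is to use Tits' canonical lifts (which trivialise the relevant $t_i$), or to restrict attention to the separable part of the isogeny.

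For (3), I would reduce to the case in which $\tau$ is an adjacent transposition, the general case following by iterating braid and commutation moves. Commuting pairs $\sigma_i \sigma_j = \sigma_j \sigma_i$ pose no difficulty. For a braid move, an explicit conjugator can be produced inside the rank-$2$ subgroup generated by $X_{\pm\alpha_i}$ and $X_{\pm\alpha_j}$; since the relevant formulas only use the Chevalley commutation relations (available over $\scl F$), the argument of \cite[Prop.~7.8]{Steinberg:RegularElements} goes through without modification.
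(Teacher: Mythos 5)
The paper offers no proof of this proposition: it is presented as a transcription of \cite[\S7]{Steinberg:RegularElements} to $\scl{F}$, justified by the Borel--Springer observation that Steinberg's constructions in \S7 make sense over separably closed fields. Your arguments for (1) and (3) are the standard adaptations and are fine. One small correction to (3): the elementary moves that relate two orderings of a Coxeter word are commutation swaps and \emph{cyclic shifts} (handled by the observation that $ab$ and $ba$ are conjugate), not braid moves -- a braid relation never transposes two factors inside a product containing each simple reflection exactly once.

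The substantive problem is (2), and the obstacle you flag there is not a removable difficulty of the proof but a defect of the statement over $\scl{F}$. Your computation shows $N'=sN$ with $s=t_1\sigma_1(t_2)\cdots\sigma_1\cdots\sigma_{r-1}(t_r)$, and every $s\in T(\scl{F})$ arises this way (take $\dot\sigma_1'=s\dot\sigma_1$ and $\dot\sigma_i'=\dot\sigma_i$ for $i\geqslant 2$); on the other hand $tNt^{-1}=t\,c(t)^{-1}N$ with $c=\sigma_1\cdots\sigma_r$, and left translates of $N=U_w\dot w$ by distinct elements of $T$ are distinct since $T\cap U_w=1$. Hence the conjugation half of (2) is \emph{equivalent} to surjectivity of $t\mapsto t\,c(t)^{-1}$ on $T(\scl{F})$, i.e.\ of the isogeny $1-c$, whose kernel is the centre of $G$. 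This genuinely fails: for $G=\SL_p$ in characteristic $p$ with $c$ the standard $p$-cycle, solving $t\,c(t)^{-1}=(s_1,\dots,s_p)$ forces $t_1^p=s_1^{p-1}s_2^{p-2}\cdots s_{p-1}$, and for $s_1=u$, $s_p=u^{-1}$, $s_i=1$ otherwise, the right-hand side is $u^{p-1}$, which is not a $p$-th power in $\FF_p(u)^{\mathrm{sep}}$. (Equivalently, the cokernel of $1-c$ on $\scl{F}$-points is $\mathrm{H}^1_{\rm fl}(\scl{F},\mu_p)=\scl{F}^\times/(\scl{F}^\times)^p\neq 0$.) Neither of your proposed workarounds can close this: Tits' canonical lifts do not help because the $\dot\sigma_i'$ in the statement are arbitrary, and passing to the separable part of the isogeny discards exactly the problematic classes.

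The correct resolution is to weaken the statement to what is actually used downstream: over $\scl{F}$ one has the left-translation identity $N'=t'N$ unconditionally, while the conjugation form $N'=tNt^{-1}$ holds over $\cl{F}$ (where every isogeny of tori is surjective on points, so Steinberg's \cite[Lemma~7.5]{Steinberg:RegularElements} applies verbatim) -- and indeed the paper only invokes part (2) in this conjugation form over $\cl{F}$, in the proof of Theorem~\ref{thm:Steinberg}. If you want a rational conjugation statement, it must be checked for the specific torus and Weyl element at hand, as the paper does for the unitary case in Proposition~\ref{prop:BS-unitary}.
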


We will show now that if $G$ does \emph{not} have any odd special unitary factor it is possible to choose $N$ so that it is defined over the same ground field $F$ as $G$.

Let $E$ be a finite Galois extension of $F$ over which  $G$ splits. Clearly the root subgroups $X_i$ are defined over $E$. Furthermore, by a simple application of Hilbert's Theorem~90 for split tori it is possible to take $\dot\sigma_i\in G(E)$; \emph{cf.} \cite[Lemma~9.3]{Steinberg:RegularElements}.

Now, we pick a representative $\alpha_i$ in each $\Gal(E/F)$-orbit of simple roots $\{\alpha_1,\cdots,\alpha_r\}$ and pick $\dot\sigma_i\in G(E)$. For $\alpha_j = \alpha_i^\gamma$ for $\gamma\in\Gal(E/F)$, we pick $\sigma_j\coloneqq\sigma_i^\gamma$. We also reorder $(\alpha_i)$ so that $\Gal(E/F)$-conjugates have adjacent indices.  We construct $N$ using this choice.
\begin{thmsub}[\emph{cf.~}{\cite[Theorems~9.2,~9.4]{Steinberg:RegularElements}}]\label{thm:NormalForms}
We maintain the assumption that $G$ is semi-simple, simply connected and quasi-split over $F$. Assume furthermore that $G$ does \emph{not} have an odd special unitary factor.  Then  the closed subvariety $N\subset G_{\scl{F}}$ constructed under the choices made as above is defined over $F$. 
\end{thmsub}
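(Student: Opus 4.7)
The plan is to show that the closed subvariety $N \subset G_{\scl{F}}$ is stable under the natural action of $\Gal(E/F)$, from which it follows by Galois descent that $N$ is defined over $F$. The two choices made in the setup will be crucial: the indices corresponding to each $\Gal(E/F)$-orbit of simple roots have been made consecutive, and the lifts are chosen so that $\dot\sigma_j = \gamma(\dot\sigma_i)$ whenever $\alpha_j = \gamma(\alpha_i)$.

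First, I would compute the Galois action on $N$ explicitly. For any $\gamma \in \Gal(E/F)$, functoriality of root subgroups gives $\gamma(X_i) = X_{\gamma(i)}$, while the choice of lifts gives $\gamma(\dot\sigma_i) = \dot\sigma_{\gamma(i)}$. Hence
\[
\gamma(N) \;=\; \prod_{i=1}^r X_{\gamma(i)} \dot\sigma_{\gamma(i)} \;=\; N^{\gamma},
\]
in the notation of Proposition~\ref{prop:Steinberg}. Because indices of each Galois orbit are consecutive, the permutation induced by $\gamma$ preserves Galois orbits, and the problem reduces to showing that reordering the factors within each orbit does not change $N$ as a subvariety.

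This is where the hypothesis that $G$ has no odd special unitary factor enters. Going through the classification of pinned automorphisms of Dynkin diagrams (equivalently, of quasi-split simply connected simple groups), this hypothesis is equivalent to requiring that every Galois orbit of simple roots consists of pairwise \emph{non-adjacent} roots in the diagram; the only case excluded is the type-$A_{2n}$ involution that swaps the two central simple roots $\alpha_n$ and $\alpha_{n+1}$. Given this, for simple roots $\alpha_i,\alpha_j$ in a common orbit we have $\sigma_i(\alpha_j)=\alpha_j$, the reflections $\sigma_i,\sigma_j$ commute in the absolute Weyl group, and $X_i$ and $X_j$ commute elementwise inside $G$. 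Combined with a suitable normalization of the lifts $\dot\sigma_i$---which is available because $E$ splits $G$ and Hilbert~90 holds for the split torus $T_E$, \emph{cf.}\ \cite[Lemma~9.3]{Steinberg:RegularElements}---these relations will yield the identity of subvarieties
\[
(X_i\dot\sigma_i)(X_j\dot\sigma_j) \;=\; (X_j\dot\sigma_j)(X_i\dot\sigma_i).
\]
Iterating within each orbit gives $N^\gamma = N$, hence $\gamma(N)=N$ for every $\gamma \in \Gal(E/F)$, and the usual descent for closed subvarieties of $G_E$ finishes the proof.

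The main obstacle I expect is the bookkeeping in the last step. \emph{A priori}, the products $\dot\sigma_i\dot\sigma_j$ and $\dot\sigma_j\dot\sigma_i$ in $N_G(T)(E)$ differ by an element $t\in T(E)$, and it must be shown that this torus discrepancy is absorbed by the root subgroups and hence invisible on the level of subvarieties. This is the reductive analogue of the computation in \cite[\S9]{Steinberg:RegularElements}; whereas Steinberg invoked Jordan decomposition at this point (which requires perfectness of the base field), here I would instead rely on the structural description of $T^{\der}$ supplied by Lemma~\ref{lem:tori}\eqref{lem:tori:res}, together with Lemma~\ref{lem:tori}\eqref{lem:tori:ses}, which both remain valid over imperfect separably closed fields and make the torus ambiguity above rational in a way that can be cancelled into the $X_i$'s.
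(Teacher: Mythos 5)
Your argument is essentially the paper's: both reduce, via Galois descent from the splitting field $E$, to the commutativity of the factors $X_i\dot\sigma_i$ lying in a single $\Gal(E/F)$-orbit, and this commutativity holds precisely because the roots in such an orbit are pairwise non-adjacent in the Dynkin diagram once odd special unitary factors are excluded (the $A_{2n}$ involution, which swaps the two adjacent middle roots, being the unique offender). One correction to your final paragraph: the ``torus discrepancy'' between $\dot\sigma_i\dot\sigma_j$ and $\dot\sigma_j\dot\sigma_i$ cannot be ``absorbed by the root subgroups'' --- since $X_iX_j\cap T=\{1\}$, one has $X_iX_j\,t\ne X_iX_j$ for any $t\ne 1$, so Lemma~\ref{lem:tori} is of no help here; the correct resolution is to choose the lifts inside the commuting rank-one subgroups attached to $\alpha_i$ and $\alpha_j$ (as in \cite[Lemma~9.3]{Steinberg:RegularElements}), so that $\dot\sigma_i$ and $\dot\sigma_j$ genuinely commute and moreover normalise $X_j$ and $X_i$ respectively, whence $(X_i\dot\sigma_i)(X_j\dot\sigma_j)=X_iX_j\dot\sigma_i\dot\sigma_j=(X_j\dot\sigma_j)(X_i\dot\sigma_i)$ with no torus element appearing at all.
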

\begin{proof}
The case when $G$ is split over $F$, the theorem is obvious  as $X_i$ and $\dot\sigma_i$ are all defined over $F$; \emph{cf.} \cite[Theorem~9.2]{Steinberg:RegularElements}. If $G$ is quasi-split over $F$ and split over $E/F$, then clearly $N$ (constructed under the choice made as above) is naturally defined over $E$. It remains to show that $N$ is stable under the $\Gal(E/F)$-action. 

Since $\Gal(E/F)$ permutes $X_i\cdot\dot\sigma_i$, the desired Galois stability boils down to the property that 
\[(X_i\cdot\dot\sigma_i)\cdot (X_i^\gamma\cdot\dot\sigma_i^\gamma) = (X_i^\gamma\cdot\dot\sigma_i^\gamma)\cdot (X_i\cdot\dot\sigma_i), \quad\forall i,\ \forall \gamma\in\Gal(E/F)\]
By inspecting all the possible automorphisms of Dynkin diagrams, the above property holds except for the non-trivial diagram automorphism of $A_{2n}$, which corresponds to odd special unitary groups; \emph{cf.} proof of Theorem~9.4 in \cite{Steinberg:RegularElements}.
\end{proof}
\begin{exasub}
	Let us give an example of quasi-split $\SU_4$. Let $G\coloneqq\SU_4$ attached to a rank-$4$ totally isotropic hermitian space for $E/F$. Then $G_E\cong \SL_4$, and we may make a standard choice of simple roots $\alpha_1,\alpha_2,\alpha_3$. Let $\bar{(\bullet)}$ denote the unique non-trivial element in $\Gal(E/F)$. Then we have $\bar\alpha_1 = \alpha_3$ and $\bar\alpha_2=\alpha_2$. We choose $\dot\sigma_i$ so that $\dot\sigma_2$ is defined over $F$ and $\bar{\dot\sigma}_1 = \dot\sigma_3$.
	
	Let us set $N =  (X_2\dot\sigma_2)(X_1\dot\sigma_1)(X_3\dot\sigma_3)$. (Mind the order of the indices.) Then, since $\alpha_1$ and $\alpha_3$ are orthogonal, it follows that $X_1\dot\sigma_1$ and $X_3\dot\sigma_3$ commute so we have $N = \bar N$.
	
	Let us now explain why odd special unitary factors should be excluded in the above theorem. For simplicity, let us consider quasi-split $\SU_3$. Over some separable quadratic extension $E/F$ we have two simple roots $\alpha_1$ and $\alpha_2$, which are permuted by the non-trivial element of $\Gal(E/F)$. So from our choice of $\dot\sigma_i$, we have $N = (X_1\dot\sigma_1)(X_2\dot\sigma_2)$ and $\bar N = (X_2\dot\sigma_2)(X_1\dot\sigma_1)$. It does not seem possible to choose $\dot\sigma_1$ so that $N=\bar N$.
\end{exasub}

\subsection{Normal forms: Case without odd special unitary factor} The statements up until Proposition~\ref{prop:class-fcn} work for an arbitrary reductive group $G$, we only need the additional requirements in the last step. Assume that $E$ is a field extension of $F$ over which $G$ is split. (For example, we will later focus on the case when $E = \scl{F}$ or $E=\cl F$.) We denote by $G \sslash G$ the GIT quotient of $G$ with respect to the conjugation, and by $T\sslash W$ the GIT quotient of $T$ modulo its Weyl group in $G$.

\begin{lemsub}[{cf.~\cite[Cor.~6.4]{Steinberg:RegularElements}}]
 The canonical morphism $T\sslash W \to G \sslash G$ is an isomorphism.
\end{lemsub}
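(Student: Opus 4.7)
The lemma is Chevalley's restriction theorem: the inclusion $T\hookrightarrow G$ induces a comorphism $E[G]^G \to E[T]^W$ (which is $W$-invariant since $N_G(T)/T = W$), and the claim is that this comorphism is an isomorphism. The plan is to reduce to the split case, check bijectivity on geometric points, and then upgrade to a scheme-theoretic isomorphism.

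First, I would invoke faithfully flat descent to pass to the base change over $E$, over which $G$ and $T$ are split. This is permissible because forming GIT quotients commutes with flat base change, and an affine morphism is an isomorphism iff its faithfully flat base change is.

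Next, I would verify bijectivity of $\iota\colon T\sslash W \to G\sslash G$ on $\cl{E}$-points. The $\cl{E}$-points of $G\sslash G$ parametrize closed $G(\cl{E})$-conjugation orbits in $G(\cl{E})$, which for reductive $G$ coincide with the semisimple conjugacy classes. Surjectivity of $\iota$ on points is then immediate: every semisimple element is $G(\cl{E})$-conjugate into $T$, since all maximal tori are conjugate. For injectivity, if $gt_1g^{-1}=t_2$ with $g\in G(\cl{E})$ and $t_1,t_2\in T(\cl{E})$, then $T$ and $gTg^{-1}$ are both maximal tori of $C_G(t_2)^\circ$, hence conjugate by some $h\in C_G(t_2)^\circ(\cl{E})$; then $hg \in N_G(T)(\cl{E})$ witnesses that $t_1$ and $t_2$ are $W$-conjugate.

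To upgrade this bijection on points to a scheme isomorphism, the decisive point is the surjectivity of $E[G]^G\to E[T]^W$, equivalently the finiteness of $\iota$ (injectivity of the comorphism follows from dominance of $\iota$, which is immediate from surjectivity on points). Granting finiteness, a dimension count (both sides being $\rank(G)$-dimensional), birationality (over the regular semisimple locus $\iota$ is an isomorphism), and normality of $G\sslash G$ combine via Zariski's main theorem to yield the claim. The surjectivity of the comorphism is the main obstacle, and it is classically established by representation theory: when $G$ has simply connected derived subgroup, the characters of irreducible representations of $G$ span $E[T]^W$ via Weyl's character formula, and the general case reduces to this via a $z$-extension. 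Alternatively, the Steinberg normal form $N\cong\AA^r$ from Proposition~\ref{prop:Steinberg}, together with the fact that every element of $G(\cl{E})$ is conjugate to one in $N(\cl{E})$, provides a geometric route to the same conclusion.
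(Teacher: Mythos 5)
Your plan is correct and follows essentially the same route as the paper: reduce to the split form over $E$ by (faithfully) flat base change — the paper phrases this as both quotients being uniform — and then establish the Chevalley restriction isomorphism $E[G]^G\isom E[T]^W$ by Steinberg's representation-theoretic argument. Two small caveats. First, since $F$ has characteristic $p>0$, Weyl's character formula does \emph{not} compute the characters of the irreducible representations $L(\lambda)$; what you actually need is only the unitriangularity $\mathrm{ch}\,L(\lambda)=e^{\lambda}+\sum_{\mu<\lambda}a_\mu e^{\mu}$ coming from the classification of simple modules by dominant highest weights, which is valid in all characteristics — this is precisely why the paper replaces Steinberg's reference by \cite[Prop.~II.2.4]{Jantzen:RepAlgGr}. (Alternatively one may use Weyl modules, whose characters are given by the Weyl formula and are still upper unitriangular with respect to orbit sums.) Second, your closing step is more elaborate than necessary: once the comorphism $E[G]^G\to E[T]^W$ is surjective (by the spanning argument) and injective (by dominance of $\iota$, i.e.\ density of the union of the conjugates of $T$), it is already an isomorphism of rings, so the detour through finiteness, normality and Zariski's main theorem can be dropped; also note that surjectivity of the comorphism is strictly stronger than finiteness of $\iota$, not equivalent to it.
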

\begin{proof}
 Since both quotients are uniform, it suffices to consider to prove the claim over $E$. Here the proof is the same as Steinberg's proof for semi-simple $G$ if one replaces 7.15 of \emph{loc.~cit.} by \cite[Prop.~II.2.4]{Jantzen:RepAlgGr}.
\end{proof}

 Thus we have the following natural maps:
\begin{equation}\label{eq:chi-red}
	\underline\chi\colon
	\xymatrix@1{G \ar[r]& (G\sslash G) \ar[r]^-{\cong} & (T\sslash W) },
\end{equation} 
 
Note that $\underline\chi$ is constant on each conjugacy class, being conjugation-invariant. More precisely, we obtain the following result
\begin{propsub}\label{prop:class-fcn}
The fibre of $\underline\chi$ at any point $\underline c\in (T\sslash W)(\cl F)$ consists of finitely many conjugacy classes with the semisimple part determined by $\underline c$. Furthermore, $\underline\chi^{-1}(\underline c)$ contains
\begin{subenv}
	\item a unique regular conjugacy classes in $G(\cl F)$, and
	\item a unique semisimple conjugacy classes over $G(\cl F) $.
\end{subenv}
Furthermore, $\chi^{-1}(\underline c)$ consists of a single conjugacy class $C$ if and only if it is a regular semisimple conjugacy class. In this case we have $\underline c$ is an $F$-rational point of $(T\sslash W)$ if and only if $C$ is defined over $F$.
\end{propsub}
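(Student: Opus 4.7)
The plan is to base everything on Jordan decomposition together with the bijection between semisimple conjugacy classes in $G(\cl F)$ and $(T\sslash W)(\cl F)$, and then to analyse each fibre of $\underline\chi$ via unipotent conjugacy classes in the centraliser of a representative semisimple element. The hypothesis that $G^{\der}$ is simply connected will play an essential role, since by a theorem of Steinberg it guarantees that centralisers of semisimple elements in $G$ are connected, which keeps the parametrisation of the fibres clean.

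First I would observe that the GIT quotient $G \to G\sslash G$ identifies $g$ with its semisimple part $g_s$ (a standard fact: invariant functions on $G$ are generated by traces in representations, and $\tr_V(g) = \tr_V(g_s)$ since $g$ and $g_s$ have the same eigenvalues), hence $\underline\chi(g) = \underline\chi(g_s)$. Combined with the identification $G\sslash G \cong T\sslash W$, this yields a bijection between semisimple conjugacy classes in $G(\cl F)$ and $(T\sslash W)(\cl F)$, providing the unique semisimple class in each fibre and proving~(b). Next I would fix a semisimple representative $s \in T(\cl F)$ of $\underline c$ and parametrise the remainder of the fibre: by uniqueness of Jordan decomposition every element of $\underline\chi^{-1}(\underline c)$ is $G(\cl F)$-conjugate to some $s \cdot u$ with $u \in G_s(\cl F)$ unipotent, and two such products $s\cdot u,\, s\cdot u'$ are $G(\cl F)$-conjugate iff $u,u'$ are $G_s(\cl F)$-conjugate. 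Finiteness of the fibre then follows from the classical finiteness of unipotent conjugacy classes in the connected reductive group $G_s$. Existence and uniqueness of the regular class in the fibre reduce to Steinberg's uniqueness theorem for the regular unipotent class in $G_s$, together with the observation that $s \cdot u$ is regular in $G$ iff $u$ is regular unipotent in $G_s$ (both conditions amount to $\dim Z_G(su) = \rank G = \rank G_s$), proving~(a).

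The fibre collapses to a single conjugacy class precisely when $G_s$ has no non-trivial unipotent element, i.e.\ when $G_s$ is a torus, equivalently when $s$ is regular semisimple; this gives the stated equivalence. For the final descent statement, the direction ``$C$ defined over $F \Rightarrow \underline c \in (T\sslash W)(F)$'' is immediate because $\underline\chi$ is an $F$-morphism. Conversely, when $\underline c \in (T\sslash W)(F)$ the scheme-theoretic fibre $\underline\chi^{-1}(\underline c) \subset G$ is an $F$-subscheme whose set of $\cl F$-points equals $C$; since $\underline\chi$ is smooth at any regular semisimple point, this fibre is smooth and coincides with $C$ as a reduced subscheme of $G_{\cl F}$, so the $F$-structure on the fibre descends to an $F$-form on $C$. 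The hard part will be this last step: verifying smoothness of $\underline\chi$ at regular semisimple points over a possibly imperfect $F$, so that the scheme-theoretic fibre really coincides with the reduced conjugacy class. This is a classical property of the Chevalley morphism $G \to T\sslash W$ but must be invoked with some care in the imperfect setting considered here.
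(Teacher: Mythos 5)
Your proof is correct, but it follows a genuinely different route from the paper's. The paper handles the entire fibre analysis in one stroke: over $\cl F$ one has $G(\cl F)=Z(\cl F)\cdot G^{\der}(\cl F)$, so every conjugacy class in $G(\cl F)$ is a central translate of a conjugacy class in $G^{\der}(\cl F)$, and all assertions about the fibres of $\underline\chi$ then reduce to the semisimple simply connected case, which is exactly Steinberg's Corollary~6.6. (This central-translate trick is precisely the one the appendix points out is unavailable over $\scl F$; it is legitimate here only because everything takes place over $\cl F$.) You instead redo the fibre analysis from scratch: Jordan decomposition together with the bijection between semisimple classes and $(T\sslash W)(\cl F)$ identifies the fibre over $\underline c$ with the set of unipotent classes of the centraliser $G_s$ of a representative $s$, and you then feed in connectedness of $G_s$ (Steinberg, via simple connectedness of $G^{\der}$ --- genuinely needed so that $G(\cl F)$-conjugacy of $su$ and $su'$ forces $G_s(\cl F)$-conjugacy of $u$ and $u'$), finiteness of unipotent classes, and existence and uniqueness of the regular unipotent class. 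This costs more input but buys a sharper statement (an explicit parametrisation of the fibre) and makes visible exactly where simple connectedness enters. On the final rationality claim the paper is at least as terse as you are (``follows from the rest of the proposition''); note that the only direction used downstream, in the proof of Theorem~\ref{thm1}, is the easy one, namely that $C$ defined over $F$ forces $\underline c\in(T\sslash W)(F)$. The delicate converse --- where you propose invoking smoothness of $\underline\chi$ on the regular locus over an imperfect field --- can instead be deduced a posteriori from Theorem~\ref{thm1} itself without circularity: once $C$ is known to contain a point $g\in G(F)$, it is the orbit of an $F$-rational point under the $F$-rational conjugation action and hence is defined over $F$, with no smoothness input required.
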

\begin{proof}
The rationality statement follows from the rest of the proposition, as it states that $C$ is the preimage of $\underline c$ and $\underline c$ is the scheme-theoretic image of $C$, respectively. Since we have $G(\cl F) = Z(\cl F)\cdot G^{\der}(\cl F)$, any conjugacy class in $G(\cl F)$ is a translate of a conjugacy class in $G^{\der}(\cl F)$ by some element $z\in Z(\cl F)$. Therefore, to prove the proposition it suffices to handle the case when $G$ is semisimple (and simply connected), which is proved in \cite[Corollary~6.6]{Steinberg:RegularElements}.
\end{proof}

We note that the canonical projection $T\twoheadrightarrow D$ is $W$-equivariant and thus factors through $T \sslash W$. Let $(T\sslash W)_{\bar t}$ denote the preimage of $\bar t\in D(F)$. The following result is essentially due to Steinberg:

\begin{thmsub}[Steinberg]\label{thm:Steinberg} Assume that $G^{\rm der}$ is simply connected and quasi-split over $F$ with no odd special unitary factor, and let $N$ be a closed $F$-subvariety of $G$ defined as in Theorem~\ref{thm:NormalForms}. Let $\bar t\in D(F)$, and choose a lift $t\in T(F)$ of $\bar t$ by Lemma~\ref{lem:tori}(\ref{lem:tori:ses}). Then $\underline\chi$ induces an isomorphism
\[\underline\chi\colon\xymatrix@1{t\cdot N \ar[r]^-{\cong}& (T\sslash W)_{\bar t}}.\]
\end{thmsub}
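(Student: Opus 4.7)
The strategy is to reduce to Steinberg's theorem for the simply connected semisimple group $G^{\der}$, which supplies the $F$-isomorphism $\underline\chi^{\der}\colon N \xrightarrow{\cong} T^{\der}\sslash W$ (Proposition~\ref{prop:Steinberg}~(1) together with Theorem~\ref{thm:NormalForms} applied to $G^{\der}$).

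First I verify well-definedness of the morphism in the statement. Because $\sigma_i(\lambda)-\lambda \in \ZZ\alpha_i^\vee \subseteq Q^\vee \subseteq X_\ast(T^{\der})$ for each simple reflection, the Weyl group $W$ acts trivially on $D = T/T^{\der}$, so the projection $T \to D$ is $W$-invariant and descends to a morphism $T\sslash W \to D$. The characteristic map $\underline\chi$ is compatible with this projection: for $g=g_s g_u \in G$ with Jordan decomposition, $g$ and $g_s$ have the same image in $D$ (unipotents being trivial in the torus $D$), so $\underline\chi\colon G \to T\sslash W \to D$ coincides with $G \to D$. Since $t\cdot N \subseteq tG^{\der}$ lies in the fibre of $G \to D$ over $\bar t$, we conclude $\underline\chi(t\cdot N) \subseteq (T\sslash W)_{\bar t}$.

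Next I compare with Steinberg's theorem through the diagram
\[
\begin{tikzcd}
N \ar[r,"\underline\chi^{\der}","\cong"'] \ar[d,"n \mapsto tn"',"\cong"] & T^{\der}\sslash W \ar[d, dashed] \\
t\cdot N \ar[r,"\underline\chi"'] & (T\sslash W)_{\bar t}
\end{tikzcd}
\]
The left vertical is an $F$-iso by construction (mere translation by $t \in T(F)$), and the top horizontal is Steinberg's iso. The dashed right vertical arrow is to be induced by the $F$-rational trivialisation $T^{\der} \xrightarrow{\cong} tT^{\der}$, $s \mapsto ts$, of the $T^{\der}$-torsor $T \to D$ over $\bar t$. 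Commutativity on $\scl F$-points is verified as follows: for $n \in N(\scl F)$ with Jordan decomposition $n = n_s n_u$, with $n_s \in T^{\der}(\scl F)$ representing $\underline\chi^{\der}(n)$, the elements $t$ and $n_s$ both lie in the abelian group $T$ and hence commute, which combined with $n_s n_u = n_u n_s$ yields (after possibly adjusting $n_u$ by a $T$-conjugate) a Jordan decomposition $tn = (tn_s)n_u$; hence $\underline\chi(tn)$ is the $W$-orbit of $tn_s \in tT^{\der}$, matching the image of $\underline\chi^{\der}(n)$.

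The main obstacle is that the right vertical arrow is not induced by a naively $W$-equivariant map: the $W$-action on $tT^{\der}$ restricted from $T$ corresponds under $s\mapsto ts$ to the twisted action $w\cdot s = (t^{-1}w(t))\,w(s)$ on $T^{\der}$, with cocycle $w\mapsto t^{-1}w(t) \in T^{\der}$ that need not be a coboundary in $T^{\der}$. To bypass this, I would argue by bijectivity and smoothness: both $t\cdot N$ and $(T\sslash W)_{\bar t}$ are smooth affine $F$-varieties of dimension $r$ equal to the semisimple rank of $G$ (the latter because $T\sslash W \to D$ has geometric fibres isomorphic to $T^{\der}\sslash W \cong \AA^r$ by Chevalley's invariant theorem). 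Bijectivity of $\underline\chi\colon (t\cdot N)(\scl F) \to (T\sslash W)_{\bar t}(\scl F)$ follows from Proposition~\ref{prop:class-fcn} combined with the central decomposition $G = Z^\circ(G) \cdot G^{\der}$: each $\scl F$-point of the target corresponds to a unique regular conjugacy class in $G(\scl F)$ over $\bar t$, and such a class meets $t\cdot N$ in exactly one point because (writing $t = z\cdot t^{\der}$ with $z \in Z^\circ(G)(\scl F)$, $t^{\der}\in T^{\der}(\scl F)$) the problem reduces via conjugation by the central element $z$ to Steinberg's bijectivity statement for $G^{\der}$ applied to the translate $t^{\der}\cdot N$, whose analysis in turn uses the commutation relations $t^{\der}\dot\sigma_i = \dot\sigma_i\sigma_i(t^{\der})$ to identify $t^{\der}\cdot N$ with $N$ up to right translation by an element of $T^{\der}$. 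One then concludes via the standard fact that a bijective morphism between smooth irreducible affine $F$-varieties of the same dimension is an isomorphism (Zariski's Main Theorem).
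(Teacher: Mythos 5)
Your reduction has the right ingredients (the central lift of $\bar t$, the fact that a $T^{\der}$-translate of $N$ is a conjugate of $N$), but the way you assemble them contains two genuine gaps, both tied to the positive-characteristic setting that this appendix exists to handle. First, the concluding step --- ``a bijective morphism between smooth irreducible affine $F$-varieties of the same dimension is an isomorphism'' --- is false in characteristic $p$: the Frobenius $\AA^1\to\AA^1$, $x\mapsto x^p$, is such a bijection and is not an isomorphism. Zariski's Main Theorem gives finiteness, but upgrading a bijection to an isomorphism requires birationality (equivalently, generic separability), which bijectivity on points does not provide in characteristic $p$. Second, your bijectivity argument writes $t=z\cdot t^{\der}$ with $z\in Z^{\circ}(G)(\scl F)$; this uses the equality $T(\scl F)=Z(\scl F)\cdot T^{\der}(\scl F)$, which is exactly the decomposition the paper warns fails over $\scl F$ (e.g.\ $G=\GL_n$ with $p\mid n$, where $Z^{\circ}\times T^{\der}\to T$ is an isogeny with infinitesimal kernel and hence not surjective on separable points). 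Also note that conjugation by a central $z$ is trivial, so it cannot effect the reduction you describe; what you need is translation.

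The paper's proof avoids both problems and is shorter: by faithfully flat descent it suffices to prove the statement after base change to $\cl F$, where a \emph{central} lift $z\in Z(\cl F)$ of $\bar t$ does exist. Since $tz^{-1}\in T^{\der}(\cl F)$, Proposition~\ref{prop:Steinberg}\eqref{prop:Steinberg:indep} shows that $t\cdot N_{\cl F}$ and $z\cdot N_{\cl F}$ are conjugate by an element of $T^{\der}(\cl F)$; as $\underline\chi$ is conjugation-invariant and translation by the central $z$ commutes with $\underline\chi$, the restriction of $\underline\chi$ to $t\cdot N_{\cl F}$ is identified, as a morphism of schemes, with the $z$-translate of Steinberg's isomorphism $N\xrightarrow{\cong}T^{\der}\sslash W$ (\cite[Corollary~7.16]{Steinberg:RegularElements}). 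This inherits the isomorphism property directly and makes no appeal to point-counting. If you want to keep your diagram-based presentation, the fix is precisely this: replace the dashed vertical arrow and the cocycle discussion by the conjugation $t\cdot N = s(z\cdot N)s^{-1}$ over $\cl F$, and descend.
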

\begin{proof}
By flat descent, it suffices to prove the claim over $\cl F$. Let $z\in Z(\cl F)$ be a lift of $\bar t$,  so we have $z^{-1}t\in T^{\der}(\cl F)$.
By Proposition~\ref{prop:Steinberg}\eqref{prop:Steinberg:indep}, there exists $s\in T^{\der}(\cl F)$ such that $s^{-1} N_{\cl F} s=(z^{-1}t)\cdot N_{\cl F}$. Finally, 
the following isomorphism over $\cl F$  can be obtained from \cite[Corollary~7.16]{Steinberg:RegularElements}
\[
\underline\chi\colon\xymatrix@1{t\cdot N_{\cl F}  \ar[r]^-{\Int(s)}&
z\cdot N_{\cl F} \ar[r]^-{\cong} &
z\cdot (T^{\der}\sslash W)_{\cl F} \cong (T\sslash W)_{\bar t,\cl F}},
\]
which finishes the proof. 
 \end{proof}
 
 \begin{proof}[Proof of Theorem~\ref{thm1} in the case that $G^{\rm der}$ has no odd special unitary factor:] Let $C$ be a regular semisimple conjugacy class defined over $F$ and let $\underline c \in (T\sslash W)(F)$ be its image under $\underline \chi$.  Denote by $\tbar \in D(F)$ its image under the canonical projection $G \epi D$ and fix a lift $t \in T(F)$, which exists by  Lemma~\ref{lem:tori}(\ref{lem:tori:ses}). By the previous theorem there exists a unique $g \in t\cdot N(F)$ such that $\underline\chi (g) = \underline c$. By Proposition~\ref{prop:class-fcn}, this implies that $g$ is contained in $C$
\end{proof}

\subsection{Normal forms: Odd special unitary case}
In the presence of odd special unitary factor in $G^{\der}$, it is unclear if there exists a closed subvariety $N\subset G^{\der}$ defined over $F$ such that $\underline\chi$ \eqref{eq:chi-red} sends $N$ isomorphically onto $(T^{\der}\sslash W)$. Therefore, we will find another closed subvariety $N'\subset G^{\der} $ defined over $F$ such that $\underline{\chi}$ restricted to $N'$ induces a bijection on closed points and then repeat the proof given in the previous section.

We choose a finite separable extension $F'/F$ and a separable quadratic extension $E'/F'$. Let $(\bar\bullet)\in\Gal(E'/F')$ denote the non-trivial element.

We choose a basis $V = E'^{2n+1}$ and the hermitian form given by the anti-diagonal matrix with entry $1$. Let $w$ be such a matrix. By abuse of notation we let  $\SU_{2n+1}/F'$  denote the special unitary group associated to this hermitian space.  It is a semi-simple, simply connected and quasi-split  group defined over $F'$. More explicitly, we have
\[
\SU_{2n+1}(R) \coloneqq \left \{ g\in \SL_{2n+1}(E'\otimes_{F'}R)\ |\ ^{t}\bar g w g = w \right \}
\]
The defining condition can be rewritten as $g = w\cdot {}^{t}\bar g^{-1} \cdot w$. (Note that $w = w^{-1}$.) Therefore, we can identify $\SU_{2n+1}(F')$ as the $\Gal(E'/F')$-invariance of $\SL_{2n+1}(E')$ with respect to the action of $\sigma$ twisted as follows: 
\begin{equation}\label{eq:cocycle}
g\mapsto w\cdot {}^{t}\bar g^{-1} \cdot w. 
\end{equation}

It turns out that any quasi-split special unitary group can be written in this way. (Although there is another maximally isotropic hermitian $E'$-space with rank~$2n+1)$ that is not isomorphic to $V$, the associated special unitary group is isomorphic to the one associated to $V$.)

A maximal split torus of $\SU_{2n+1}/F'$ is isomorphic to $\GG_m^{n}$ where $(t_1,\cdots, t_n)\in F'^{\times}$ gets  mapped to the diagonal matrix $\diag(t_1,\cdots, t_n,1,t_n^{-1},\cdots,t_1^{-1})\in\SU_{2n+1}(F')$. Its centraliser, which is a maximal torus denoted as $T$, is isomorphic to $\Res_{E'/F'}\GG_m^n$ where $(t_1,\cdots, t_n)\in E'^{\times}$ gets mapped to gets  mapped to $\diag(t_1,\cdots, t_n,\prod_i (\bar t_i/t_i),\bar t_n^{-1},\cdots,\bar t_1^{-1})\in T(F')$. We can also fix an $F$-rational Borel subgroup $B\subset \SU_{2n+1}$ to be the ``upper-triangular matrices'', which contains $T$.

Using the above notation, let $G' \coloneqq \SU_{2n+1}/F'$ be the special unitary group associated to a rank-$(2n+1)$ maximally isotropic hermitian space over $E'$. (So $G'$ is defined over $F'$ and its adjoint quotient is absolutely simple.)

We work with the `standard' root datum for $G'_{E'} \cong \SL_{2n+1}/ E'$, and obtain $(\alpha_i)_{i=1,\cdots 2n}$ and $\sigma_i\in W$. We twist the action of $\Gal(E'/F')$  on $\SL_{2n+1}/E'$ as in \eqref{eq:cocycle}, so that the non-trivial element of $\Gal(E'/F')$ acts on the absolute root datum and the Weyl groups as follows:
\begin{align*}
\alpha_i & \longmapsto  \alpha_{2n+1-i}\\
\sigma_i & \longmapsto \sigma_{2n+1-i}
\end{align*}
In particular, $\alpha_n$ and $\alpha_{n+1}$ are Galois-conjugates and they are \emph{not} orthogonal, which is  why the subvariety  of rational normal forms $N = \prod_{i=1}^{2n} (X_i\cdot\dot\sigma_i)$ is not defined over $F'$. Indeed, the only problematic factors are $(X_n\dot\sigma_n)\cdot(X_{n+1}\dot\sigma_{n+1})$, so we modify the definition of $N$ as follows. (We faithfully follow the proof of \cite[Theorem~9.7]{Steinberg:RegularElements}.)
 
Let us first stratify $V\coloneqq T\sslash W$ as follows. Note that $ (T\sslash W)_{\scl{E'}} \cong \AA^{2n}$ where the isomorphism is given by the coefficients of the characteristic polynomial. We can stratify $ (T\sslash W)_{\scl{E'}}$ as follows: $V_{1,E'}\subset V_{E'}$ is an affine hyperplane defined by the condition that $1$ is an eigenvalue, and $V_{0,E'}\coloneqq V_{E'}\setminus V_{1,E'}$. Since $V_{0,E'}$ and $V_{1,E'}$ are $\Gal(E'/F')$-equivariant, they descend to $F'$-subvariaties $V_0$ and $V_1$.

The strategy now is to construct disjoint closed $F'$-subvarieties $N_0', N_1'\subset G'\coloneqq\SU_{2n+1}/F'$ such that the natural map 
\[\underline\chi:G'\cong \SU_{2n+1}/F' \to (T\sslash W) \qquad\text{\eqref{eq:chi-red}}\] induces the following isomorphisms:
\begin{align}
\label{eq:chi1} &\xymatrix@1{N'_{1} \ar[r]^-{\cong}& V_1}\\
\label{eq:chi0} &\xymatrix@1{N'_{0} \ar[r]^-{\cong}& V_0}.
\end{align}

This was done in \cite[Theorem~9.7]{Steinberg:RegularElements} at least when $F$ is perfect. Let us recall the construction and basic properties of $N_0'$ and $N_1'$.
\begin{defnsub}
Let us introduce some notation first.
\begin{enumerate}
	\item Let $\alpha\coloneqq\alpha_n+\alpha_{n+1}$, which is a Galois-invariant root. We let $X_{\alpha}$ and $X_{-\alpha}$ respectively denote the root subgroup of $\alpha$ and $-\alpha$. 
	
	Let $G_\alpha$ be the subgroup generated by $X_{\alpha}$ and $X_{-\alpha}$, which is isomorphic to $\SL_2$. Let $T_\alpha\coloneqq T \cap G_\alpha$ denote the maximal torus of $G_\alpha$.

Let $\sigma_\alpha\in W$ be the reflection with respect to $\alpha$, and choose its lift $\dot\sigma_\alpha$ in $G_\alpha(F')$.
	\item As before, let $X_i$ denote the root subgroup of $\alpha_i$. Let $G_i$ denote the ``$\SL_2$-subgroup'' associated to $\alpha_i$. Note that  $G_i$ can only be defined over $E'$. 
	\item 
	For any $i=1,\cdots n-1$ we choose $\dot\sigma_i$ in $G_i(E')$ and set $\dot\sigma_{2n+1-i}\coloneqq\bar{\dot\sigma}_i$, which clearly lifts $\sigma_{2n+1-i}$ and lies in $G_{2n+1-i}(E')$.
	\item Let us choose $u_n\in X_n(E')\setminus\{1\}$ and $u_{n+1}\in X_{n+1}(E')\setminus\{1\}$. 
\end{enumerate}

Under these choices, we define $N_0'$ and $N_1'$ as follows:
\begin{align}
N_1' &\coloneqq (X_\alpha\cdot \dot{\sigma}_\alpha)\cdot \prod_{i=1}^{n-1} \big((X_i\dot{\sigma}_i) \cdot (X _{2n+1-i}\dot{\sigma}_{2n+1-i})\big);\\
N_0' &\coloneqq (u_{n+1}u_n\cdot X_\alpha\cdot \dot{\sigma}_\alpha \cdot T_\alpha)\cdot \prod_{i=1}^{n-1}\big((X_i\dot{\sigma}_i) \cdot (X _{2n+1-i}\dot{\sigma}_{2n+1-i})\big).
\end{align}
\end{defnsub}

Clearly, $N_1'$ and $N_0'$ are defined over $E'$. We would like to show that $N_1'$ is defined over $F'$, and it is possible to choose $u_n$ and $u_{n+1}$ so that $N_0'$ is defined over $F'$.

The following proposition can be verified without difficulty:
\begin{propsub}[\emph{cf.} {\cite[Lemmas~9.13,~9.14]{Steinberg:RegularElements}}] \label{prop:Steinberg-unitary}
Set $G'\coloneqq \SU_{2n+1}/ F' $ as above.
\begin{subenv}
	\item (\emph{Cf.} \cite[Lemma~9.13]{Steinberg:RegularElements}) The natural maps $X_\alpha\times \prod_{i\ne n,n+1} X_i \to N_1'$ and $X_\alpha\times T_\alpha\times \prod_{i\ne n,n+1} X_i \to N_1' $ are isomorphisms of schemes over $E'$. Furthermore,  both $N_0'$ and $N_1'$ are closed subvarieties. 
	\item\label{prop:Steinberg-unitary:indep} (\emph{Cf.} \cite[Lemma~9.14]{Steinberg:RegularElements}) The closed subvarieties $N_1'$ and $N_0'$ are independent of choices of $\dot\sigma_i$, $\dot\sigma_\alpha$ and $u_{n+1}u_{n}$ up to conjugate.
	\item (\emph{Cf.} \cite[Corollary~9.19]{Steinberg:RegularElements}) the natural map $\underline\chi$ for $G'$ defined in \eqref{eq:chi-red} induces the following isomorphisms over $F'$ 
\[N'_{1}\xrightarrow{\sim} V_1\quad   \eqref{eq:chi1} \quad \& \quad N'_{0}\xrightarrow{\sim} V_0 \quad \eqref{eq:chi0}.\]
\end{subenv}
\end{propsub}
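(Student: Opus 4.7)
The plan is to adapt Steinberg's arguments from \cite[\S~9]{Steinberg:RegularElements} to the present $F'$-rational setting, replacing $\cl F$ by $\scl{F'}$ throughout (as is possible by \cite[\S~8]{BorelSpringer:RationalityPropertiesII}) and verifying Galois stability at each step. The three parts (a), (b), (c) will be addressed in that order.

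For part~(a), I would mimic \cite[Lemma~9.13]{Steinberg:RegularElements}: since for $i\ne n,n+1$ the simple roots $\alpha_i$ and $\alpha_{2n+1-i}$ are orthogonal in the $A_{2n}$-diagram, the factors $(X_i\dot\sigma_i)(X_{2n+1-i}\dot\sigma_{2n+1-i})$ can be rearranged via Chevalley commutator relations, and an inductive computation inside the big Bruhat cell attached to the Coxeter element $\sigma_\alpha \cdot \prod_{i\ne n,n+1} \sigma_i$ identifies $N_1'$ (resp.\ $N_0'$) with the image of a locally closed immersion from $X_\alpha \times \prod_{i \ne n,n+1} X_i$ (resp.\ with an additional $T_\alpha$-factor). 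Closedness then follows from the resulting dimension count. For part~(b), a different lift $\dot\sigma_i'$ differs from $\dot\sigma_i$ by an element of $T(\scl{F'})$ and varying $u_n u_{n+1}$ amounts to $T$-conjugation within $X_n X_{n+1}$; a telescoping argument using the identity $t\dot\sigma_j t^{-1} = \dot\sigma_j \cdot (\sigma_j(t)^{-1} t)$ then shows that all such choices yield conjugate closed subvarieties, exactly as in \cite[Lemma~9.14]{Steinberg:RegularElements}.

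For part~(c), I would follow the computation in \cite[Cor.~9.19]{Steinberg:RegularElements}: a direct calculation of the characteristic polynomial shows that every element of $N_1'$ has $1$ as an eigenvalue while no element of $N_0'$ does, and the remaining coefficients of the characteristic polynomial (together with the $T_\alpha$-coordinate in the case of $N_0'$) give étale coordinate systems on $N_1'$ and $N_0'$ respectively. The induced morphism $N_1' \to V_1$ (resp.\ $N_0' \to V_0$) is then a bijective morphism of smooth $F'$-varieties of equal dimension whose differential is injective by the coordinate calculation, and hence an isomorphism over $F'$.

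The main obstacle is securing the $F'$-rationality of $N_0'$: since $\Gal(E'/F')$ swaps $X_n$ and $X_{n+1}$, one must choose $u_n \in X_n(E')$ and $u_{n+1} \in X_{n+1}(E')$ so that $u_{n+1} u_n$ is Galois-invariant up to $T^{\der}$-conjugation. The natural choice $u_{n+1} \coloneqq \bar u_n$ for any nonzero $u_n \in X_n(E')$ works by part~(b), which reduces the issue to the choice ambiguity already controlled there.
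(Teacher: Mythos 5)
Your proposal follows essentially the same route as the paper: the paper's proof simply observes that all assertions are Steinberg's Lemmas~9.13, 9.14 and Corollary~9.19, which (by the Borel--Springer refinements) go through over $\scl{F'}$ and descend, and your sketch is a faithful expansion of exactly those arguments. The only difference is that you fold the $F'$-rationality of $N_0'$ (the choice of $u_n,u_{n+1}$) into this proof, whereas the paper treats that point separately after the proposition, in the discussion leading to Proposition~\ref{prop:BS-unitary}; your resolution via $u_{n+1}\coloneqq \bar u_n$ and part~(b) is consistent with what the paper does there.
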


\begin{proof}
By flat descent it suffices to verify the claims over the perfect closure of $F'$, which  is proved in the reference given.
\end{proof}

 We now show that under suitable choices we can arrange so that $N_1'$ and $N_0'$ are defined over $F'$ (and thus the isomorphisms \eqref{eq:chi1} and \eqref{eq:chi0} are defined $F'$, too).
	
	Firstly, recall that $G'\coloneqq\SU_{2n+1}/F'$ is the special unitary group associated to the hermitian space $\bigoplus_{i=1} ^{2n+1} E'e_i$ with hermitian form given by antidiagonal matrix $w$. The subgroup fixing $e_{n+1}$ is a quasi-split special unitary group
\begin{equation}\label{eq:even-unitary-subgroup}
H	'\coloneqq\SU_{2n}.	
\end{equation}	
 Using the choice of maximal torus and Borel subgroup coming from the ambient $G'\coloneqq\SU_{2n+1}$, we can embed the root datum for $\SU_{2n}$ into $\SU_{2n+1}$; in fact, the simple roots for $\SU_{2n}$ consists of $\alpha$ and $\alpha_i$ for $i\ne n,n+1$. Our choice of $\sigma_\alpha$ and $\dot\sigma_i$ ($i\ne n,n+1$) ensures that  $\dot\sigma_i\in H'(E')$ for any $i\ne n,n+1$, so we have 
 \[N'_{1,E'}\subset H'_{E'}.\] Clearly, $N_1'$ coincides with the image of closed subvarieties of normal forms for $H'$ with respect to the ordering of simple roots given by $(\alpha,\alpha_1,\alpha_{2n},\cdots)$. Therefore, $N_1'$ is defined over $F'$; \emph{cf.} Theorem~\ref{thm:NormalForms}. Indeed, the proof shows that the factor 
\begin{equation}\label{eq:fudge-factors}
\prod_{i=1}^{n-1} \big((X_i\dot{\sigma}_i) \cdot (X _{2n+1-i}\dot{\sigma}_{2n+1-i})\big) 
 \end{equation} 
  is also defined over $F'$. 
		
	Now let us consider $N_0'$. Since the root $\alpha$ is defined over $F'$, it follows that $T_\alpha$ and $X_\alpha$ are defined over $F'$. We also chose $\dot\sigma_\alpha$ in $G_\alpha(F')$. So it remains to choose $u_{n+1},u_n$ so that $u_{m+1}u_m X_\alpha$ is defined over $F'$, which is possible as explained in \cite[p~309]{Steinberg:RegularElements}.

The above discussion together with Proposition~\ref{prop:Steinberg-unitary} shows the following:
\begin{propsub}\label{prop:BS-unitary}
Set $G'\coloneqq \SU_{2n+1}/ F' $, $H'\subset G'$, $N_0'$ and $N_1'$ are defined as above.
\begin{enumerate}
	\item  $N_0'$ and $N_1'$ are closed subvarieties of $G'$ defined over $F'$. 
	\item For any $t\in T(F')\cap H'(F')$, there exists $t_0, t_1\in T(F')\cap H'(F')$ such that we have 
	\[t\cdot N_0' = t_0 N_0' t_0^{-1}\quad \& \quad t\cdot N_1' = t_1 N_1' t_1^{-1}.\]
\end{enumerate}
\end{propsub}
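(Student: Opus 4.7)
The plan is to deduce both parts of the proposition from the discussion in the paragraphs immediately preceding it, together with Proposition~\ref{prop:Steinberg-unitary}.

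For part (1), the $F'$-rationality of $N_1'$ has essentially been established above: $N_{1,E'}'$ is contained in $H'_{E'} = \SU_{2n}$ (the stabiliser of $e_{n+1}$) and coincides with the subvariety of normal forms for $H'$ with respect to the simple-root ordering $(\alpha, \alpha_1, \alpha_{2n}, \alpha_2, \alpha_{2n-1}, \ldots)$. Since $H'$ is quasi-split of \emph{even} rank, Theorem~\ref{thm:NormalForms} applies and gives $N_1'$ over $F'$; the same analysis shows that the product $\prod_{i=1}^{n-1}\big((X_i\dot\sigma_i)(X_{2n+1-i}\dot\sigma_{2n+1-i})\big) \subset H'$ is defined over $F'$. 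For $N_0'$, the remaining prefactor $u_{n+1}u_n \cdot X_\alpha \cdot \dot\sigma_\alpha \cdot T_\alpha$ becomes $F'$-defined after choosing $u_n, u_{n+1}$ appropriately, as recalled from \cite[p.~309]{Steinberg:RegularElements}.

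For part (2), the first step will be the elementary observation that for any $t \in T$,
\[
t \cdot N_1' \;=\; X_\alpha \cdot (t\dot\sigma_\alpha) \cdot \prod_{i=1}^{n-1}\big((X_i\dot\sigma_i)(X_{2n+1-i}\dot\sigma_{2n+1-i})\big),
\]
since $t$ normalises $X_\alpha$; so $t\cdot N_1'$ is the variety of the same form built with $t\dot\sigma_\alpha$ in place of $\dot\sigma_\alpha$, and analogously for $N_0'$. Proposition~\ref{prop:Steinberg-unitary}\eqref{prop:Steinberg-unitary:indep} then supplies elements $t_0, t_1 \in T$ with $t \cdot N_j' = t_j N_j' t_j^{-1}$ for $j = 0, 1$.

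The hard part will be the rationality upgrade: a priori $t_0, t_1$ only lie in $T(\scl{F'})$, whereas we need them in $T(F') \cap H'(F')$. The plan is to trace through the proof of Proposition~\ref{prop:Steinberg-unitary}\eqref{prop:Steinberg-unitary:indep} (which is modelled on \cite[Lemmas~7.5, 9.14]{Steinberg:RegularElements}) and verify that the conjugating element, written there as an explicit product of Weyl translates of $t$ by the lifts $\dot\sigma_\alpha$ and $\dot\sigma_i$, is rational. Since $\dot\sigma_\alpha \in G_\alpha(F') \subset H'(F')$ by choice, and the Galois-paired products $\dot\sigma_i \dot\sigma_{2n+1-i} = \dot\sigma_i \overline{\dot\sigma}_i$ are $\Gal(E'/F')$-invariant and lie in $H'(E')$, conjugation by any of these lifts sends $T(F') \cap H'(F')$ into itself; hence every Weyl translate of the rational element $t \in T(F') \cap H'(F')$ remains in $T(F') \cap H'(F')$, yielding the required $t_j \in T(F') \cap H'(F')$.
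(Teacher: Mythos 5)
Part (1) and your identification $t\cdot N_1' = X_\alpha\cdot(t\dot\sigma_\alpha)\cdot\prod(\dots)$ are fine and match the paper. The gap is in your rationality upgrade for part (2). The conjugating element is \emph{not} a product of Weyl translates of $t$: unwinding the proof of \cite[Lemma~7.5]{Steinberg:RegularElements} (equivalently of Proposition~\ref{prop:Steinberg}\eqref{prop:Steinberg:indep}), one finds $t_1 N_1' t_1^{-1} = \bigl(t_1\,\sigma''(t_1)^{-1}\bigr)\cdot N_1'$ with $\sigma''=\sigma_\alpha\prod_{i=1}^{n-1}(\sigma_i\sigma_{2n+1-i})$, so producing $t_1$ with $t\cdot N_1'=t_1N_1't_1^{-1}$ amounts to solving the twisted-norm equation $t=t_1\sigma''(t_1)^{-1}$. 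What is needed, and what the paper invokes, is the \emph{surjectivity} of $t_1\mapsto t_1\sigma''(t_1)^{-1}$ on $T(F')\cap H'(F')$ --- a nontrivial fact about the isogeny $1-\sigma''$ of the induced torus $T\cap H'$, not a consequence of Weyl translates of $t$ staying rational. Your observation only establishes the easy converse direction, namely that $t_1N_1't_1^{-1}$ is a rational \emph{left} translate of $N_1'$ when $t_1$ is rational; it does not produce the conjugator from the translator.

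Secondly, ``analogously for $N_0'$'' hides a real step: left translation by $t$ changes \emph{both} the lift $\dot\sigma_\alpha$ \emph{and} the elements $u_{n+1},u_n$ (to $tu_{n+1}t^{-1}$, $tu_nt^{-1}$), and the new unipotents are defined over $F'$ only through the product $u'_{n+1}u'_nX_\alpha$, not individually. The paper therefore splits off the prefactor $t(u_{n+1}u_nX_\alpha)t^{-1}$, solves $t=t_0'\sigma''(t_0')^{-1}$ with $t_0'\in T(F')\cap H'(F')$ for the remaining factor, checks that the resulting prefactor is again of the form $u''_{n+1}u''_nX_\alpha$ with this product $F'$-rational, and only then invokes the independence of $N_0'$ from the choice of $u_{n+1},u_n$ up to conjugation (Proposition~\ref{prop:Steinberg-unitary}\eqref{prop:Steinberg-unitary:indep}). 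Your sketch needs to be completed along these lines before it constitutes a proof.
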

\begin{proof}
We have already showed the first claim, and the existence of $t_1$ in the second claim follows from Proposition~\ref{prop:Steinberg}\eqref{prop:Steinberg:indep} applied to $N_1'\subset H'$.

To find $t_0$, note that 
\[t\cdot N_0' = t\big( u_{n+1}u_n X_\alpha \big )t^{-1}\cdot t \left (X_\alpha \dot\sigma_\alpha T_\alpha\prod_{i=1}^{n-1}\big((X_i\dot{\sigma}_i) \cdot (X_{2n+1-i}\dot{\sigma}_{2n+1-i})\big)\right) \]
the factor $t (u_{n+1}u_n X_\alpha) t^{-1}$ is defined over $F'$, and we can write
\[t (u_{n+1}u_n X_\alpha) t^{-1} = u'_{n+1}u'_nX_\alpha,\]
where $u'_{n+1} = t u'_{n+1}t^{-1} \in X_{n+1} $ and $u'_{n} = tu'_{n}t^{-1} \in X_{n}$.

One can  check that for any $t_0'\in T(F')\cap H'(F')$ we have
\begin{multline*}
t_0' \left (X_\alpha \dot\sigma_\alpha T_\alpha\prod_{i=1}^{n-1}\big((X_i\dot{\sigma}_i) \cdot (X_{2n+1-i}\dot{\sigma}_{2n+1-i})\big)\right) t_0'^{-1} 
\\= t_0' \sigma''(t_0')^{-1}\left (X_\alpha \dot\sigma_\alpha T_\alpha\prod_{i=1}^{n-1}\big((X_i\dot{\sigma}_i) \cdot (X_{2n+1-i}\dot{\sigma}_{2n+1-i})\big)\right) ,
\end{multline*}
where $\sigma'' = \sigma_\alpha\prod_{i=1}^{n-1}(\sigma_i\sigma_{2n+1-i})$ is an element in the Weyl group for $H'$. Since the homomorphism $t_0'\mapsto t_0'\sigma''(t_0')^{-1}$ is  surjective on $T(F')\cap H'(F')$ (\emph{cf.} \cite[Lemma~7.5]{Steinberg:RegularElements}, or the proof of Proposition~\ref{prop:Steinberg}\eqref{prop:Steinberg:indep}), we may find $t_0'\in T(F')\cap H'(F')$ such that $t = t_0'\sigma''(t_0')^{-1}$.

Now we can write
\begin{align*}
t\cdot N_0' & =  t\big( u_{n+1}u_n X_\alpha \big )t^{-1}\cdot t_0' \left (X_\alpha \dot\sigma_\alpha T_\alpha\prod_{i=1}^{n-1}\big((X_i\dot{\sigma}_i) \cdot (X_{2n+1-i}\dot{\sigma}_{2n+1-i})\big)\right)t_0'^{-1}\\
& = t_0'\left (
(u'_{n+1}u'_n X_\alpha)\cdot X_\alpha \dot\sigma_\alpha T_\alpha\prod_{i=1}^{n-1}\big((X_i\dot{\sigma}_i) \cdot (X_{2n+1-i}\dot{\sigma}_{2n+1-i})\big)
\right )t_0'^{-1},
\end{align*}
where $u_{n+1}' = (t_0'^{-1}t)u_{n+1}(t_0'^{-1}t)^{-1}\in X_{n+1}$ and $u_{n}' = (t_0'^{-1}t)u_{n}(t_0'^{-1}t)^{-1}\in X_{n}$. Clearly, $u_{n+1}'u_{n}'X_\alpha$ is defined over $F'$ (as it is $F'$-rationally conjugate to $u_{n+1}u_nX_\alpha$, which is defined over $F'$). Since the construction of $N_0'$ is independent of the choice of $u_{n+1}, u_n$ up to conjugate (\emph{cf.}~Proposition~\ref{prop:Steinberg-unitary}\eqref{prop:Steinberg-unitary:indep}), we obtain $t_0\in T(F')\cap H'(F')$ such that $t\cdot N_0'=t_0 N_0' t_0^{-1}$.
\end{proof}

\begin{rmksub}
It is too much to expect that $t\cdot N_0'$ and $t\cdot N_1'$ are conjugate to $N_0'$ and $N_1'$ for any $t\in T(F')$, respectively. Indeed, $N_1'\subset H'$ but $T\not\subset H'$.
\end{rmksub}

\subsection{Proof of Theorem~\ref{thm1}: General case}
Since $G^{\der}$ is simply connected, we may write
\begin{equation}
G^{\der} = G^{(0)}\times \prod G^{(m)}
\end{equation}
where $G^{(0)}$ has no odd special unitary factor and $G^{(m)}$ is an $F$-simple odd special unitary group for each $m$; therefore, we may write $G^{(m)}\cong \Res_{F_m/F}\SU_{2n_m+1}$ for a finite separable extension $F_m/F$ and an integer $n_m>0$.

For each $G^{(m)}$ we consider a closed subgroup $H^{(m)}\cong \Res_{F_m/F}\SU_{2n_m}$; \emph{cf.} \eqref{eq:even-unitary-subgroup}. Let $H$ be the closed subgroup of $G$ generated by the centre of $G$, $G^{(0)}$ and $H^{(m)}$ for each $m$. We set
\begin{equation}
T_H\coloneqq T\cap H\quad \& \quad T_H^{\der}\coloneqq T\cap H^{\der}.
\end{equation}

Since $H$ is also quasi-split and $H^{\der}$ is simply connected, Lemma~\ref{lem:tori}\eqref{lem:tori:ses} can be applied to $T_H$. Furthermore, since $D\cong H/H^{\der}$, any $\bar t\in D(F)$ has a lift $t\in T_H(F)$.

In Theorem~\ref{thm:NormalForms} we defined a closed $F$-subvariety of ``normal forms'' $N^{(0)}\subset G^{(0)}$. For $G^{(m)}$, we set $N^{(m)}\coloneqq \Res_{F_m/F}N_0'\sqcup \Res_{F_m/F}N_1'$, where $N_0', N_1'\subset \SU_{2n_m+1}/F_m$ are as in Proposition~\ref {prop:BS-unitary}. We now set 
\begin{equation}
N'\coloneqq N^{(0)} \times \prod_m N^{(m)}.
\end{equation}

The following proposition concludes the proof of Theorem~\ref{thm1}.
\begin{propsub}\label{prop:BS}
	Let $C\subset G(\cl{F})$ be a  regular semi-simple conjugacy class   defined over $F$, and choose $t\in T_H(F)$ so that its image in $D(F)$ coincides with the image of $C$. Then $C$ contains a unique element in $t\cdot N'(F)$. Hence, $C$ contains an element in $G(F)$.
\end{propsub}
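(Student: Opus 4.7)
The plan is to mimic the proof of Theorem~\ref{thm1} in the case without odd special unitary factors (given right after Theorem~\ref{thm:Steinberg}). Concretely I will establish that $\underline{\chi}$ induces an $F$-isomorphism
\[
\underline{\chi}\colon\ t \cdot N' \xrightarrow{\sim} (T\sslash W)_{\bar t}.
\]
Once this is shown, the proposition follows formally: the image $\underline c \in (T\sslash W)(F)$ of $C$ lies in $(T\sslash W)_{\bar t}(F)$ by the compatibility of $t$ and $C$ with $\bar t \in D(F)$, so the isomorphism produces a unique $x \in t \cdot N'(F)$ with $\underline{\chi}(x) = \underline c$. By Proposition~\ref{prop:class-fcn}, the fibre $\underline{\chi}^{-1}(\underline c)$ is a single regular semisimple conjugacy class equal to $C$, hence $x \in C \cap G(F)$, giving both existence and uniqueness.

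To prove the isomorphism, by faithfully flat descent it suffices to verify it after base change to $\overline F$. Over $\overline F$, using the almost direct product decomposition $T_H = Z(G)^\circ \cdot T^{(0)} \cdot \prod_m T_H^{(m)}$, we may write $t = z \cdot t^{(0)} \cdot \prod_m t^{(m)}$ with $z \in Z(G)(\overline F)$, $t^{(0)} \in T^{(0)}(\overline F)$, and $t^{(m)} \in T_H^{(m)}(\overline F)$. Since $N' = N^{(0)} \times \prod_m N^{(m)} \subset G^{\der}$ and the factors $G^{(0)}, G^{(m)}$ of $G^{\der}$ pairwise commute, we obtain the factorisation
\[
t \cdot N' \;=\; z \cdot \bigl((t^{(0)} \cdot N^{(0)}) \times \prod_m (t^{(m)} \cdot N^{(m)})\bigr).
\]
The morphism $\underline{\chi}$ is equivariant for the $Z(G)$-action on $G$ and the induced $Z(G)$-action on $T\sslash W$ carries the fibre over $1 \in D$ to the fibre over $\bar z = \bar t \in D$, so the problem reduces factor by factor to showing $\underline{\chi}\colon t^{(0)} \cdot N^{(0)} \xrightarrow{\sim} T^{(0)} \sslash W^{(0)}$ and $\underline{\chi}\colon t^{(m)} \cdot N^{(m)} \xrightarrow{\sim} T^{(m)} \sslash W^{(m)}$ for each $m$ (the fibre $(T^{(j)}\sslash W^{(j)})_{\bar{t}^{(j)}}$ equals the whole quotient, as each $G^{(j)}$ is semisimple and therefore has trivial cocenter).

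For the $G^{(0)}$ factor this is Theorem~\ref{thm:Steinberg} applied to $G^{(0)}$ with trivial cocenter and lift $t^{(0)}$. For each odd unitary factor $G^{(m)} \cong \Res_{F_m/F}\SU_{2n_m+1}$, Proposition~\ref{prop:BS-unitary}(2) yields $t_\epsilon^{(m)}$ such that $t^{(m)} \cdot N^{(m)}_\epsilon = t_\epsilon^{(m)} N^{(m)}_\epsilon (t_\epsilon^{(m)})^{-1}$ for $\epsilon \in \{0,1\}$; since $\underline{\chi}$ is conjugation-invariant and Proposition~\ref{prop:Steinberg-unitary}(3) gives $\underline{\chi}\colon N^{(m)}_\epsilon \xrightarrow{\sim} V_\epsilon$, we obtain $\underline{\chi}\colon t^{(m)} \cdot N^{(m)} \xrightarrow{\sim} V_0 \sqcup V_1 = T^{(m)}\sslash W^{(m)}$. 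Combining these factorwise isomorphisms produces the desired $\overline F$-isomorphism, which descends to an $F$-isomorphism since source, target, and morphism are all defined over $F$.

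The main obstacle is that the decomposition of $t$ into its central and simple-factor components is in general only defined over $\overline F$, not $F$; faithfully flat descent circumvents this, since being an isomorphism of $F$-schemes is fppf-local.
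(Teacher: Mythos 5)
Your proposal is correct and follows essentially the same route as the paper: the paper's proof simply says to repeat the proof of Theorem~\ref{thm:Steinberg} with $N'$ in place of $N$ and then invoke Proposition~\ref{prop:class-fcn}, and your argument is precisely that repetition carried out in detail (flat descent to $\cl F$, factorwise reduction using Proposition~\ref{prop:Steinberg}\eqref{prop:Steinberg:indep} resp.\ Proposition~\ref{prop:BS-unitary}(2) to absorb the translate, then Steinberg's isomorphisms on each factor).
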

\begin{proof}
 We denote by $\tbar \in D(F)$ and $\underline c \in (T \sslash W)(F)$ the respective images of $C$ and fix a lift $t \in T_H(F)$ of $\tbar$. Repeating the proof of Theorem~\ref{thm:Steinberg} with $N'$ in place of $N$, we obtain a unique element $g \in t \cdot N'(F)$ with $\underline\chi(g) = \underline{c}$. Hence $g \in t \cdot N'(F) \cap C$ by Proposition~\ref{prop:class-fcn}.
\end{proof}

 \def\cprime{$'$}

 \end{document}